\title{Scattering on singular Yamabe spaces}
\author[Sun-Yung Alice Chang, Stephen E. McKeown, and Paul Yang]{Sun-Yung Alice Chang\textsuperscript{*}, Stephen E. McKeown\textsuperscript{**}, and Paul Yang\textsuperscript{*,\dag}}
\dedicatory{In honor of Antonio C\'{o}rdoba and Jos\'{e} Luis Fern\'{a}ndez}
\address{Department of Mathematics, Princeton University, Princeton, NJ 08544, USA}
\address{Department of Mathematical Sciences, University of Texas at Dallas, 800 W. Campbell Road, Richardson, TX 75080, USA}
\address{Department of Mathematics, Princeton University, Princeton, NJ 08544, USA}
\email{chang@math.princeton.edu}
\email{stephen.mckeown@utdallas.edu}
\email{yang@math.princeton.edu}
\subjclass[2020]{Primary: 53A31, 53C18. Secondary: 53A55, 53C40, 58J50}
\date{\today}
\thanks{*Partially supported by NSF grant DMS-1607091. **Partially supported by NSF RTG grant DMS-1502525. \dag Partially supported by Simons Foundation grant 615589.}
\begin{document}
\maketitle
\begin{abstract}
	We apply scattering theory on asymptotically hyperbolic manifolds to singular Yamabe metrics, applying the results to the study of the conformal geometry of compact manifolds with boundary.
	In particular, we define extrinsic versions of the conformally invariant powers of the Laplacian, or GJMS operators, on the boundary of any such manifold, along with associated extrinsic $Q$-curvatures.
	We use the existence and uniqueness of a singular Yamabe metric conformal to define also nonlocal extrinsic fractional GJMS operators on the boundary, and draw other global conclusions about the scattering operator,
	including a Gauss-Bonnet theorem in dimension four.
\end{abstract}
\renewcommand*{\thetheorem}{\Alph{theorem}}
\section{Introduction}
Scattering theory on asymptotically hyperbolic manifolds has been studied with great profit in the case that the metric is Einstein. In this paper we study the scattering problem in the case
that the metric has constant scalar curvature. For every compact Riemannian manifold $(X^{n + 1},\bar{g})$ with boundary $M$, it is known that there is precisely one defining function $u$ for the boundary so that
the \emph{singular Yamabe metric} $g = u^{-2}\bar{g}$ has constant scalar curvature, so this study can be seen as bringing scattering theory to bear as a tool
in the study of the conformal geometry of compact manifolds with boundary.
We apply the methods of \cite{gz03,fg02,cqy08} and others to the setting of \cite{gw15,g17}, which used the singular Yamabe problem to study conformal hypersurfaces.

Given an asymptotically hyperbolic (AH) manifold $(X^{n + 1},g)$ with boundary $M^n = \partial X$, the scattering problem is defined as follows. Let $\bar{g} = r^2g$ be a geodesic compactification
of $g$, i.e., suppose that $|dr|_{\bar{g}} = 1$ on a neighborhood of $M$. Let $f \in C^{\infty}(M)$. Consider the equation
\begin{equation}
	\label{scateqintro}
	(\Delta_g + s(n - s))v = 0,
\end{equation}
where we assume that $s > \frac{n}{2}$ and that $s(s - n) \notin \sigma_{pp}(\Delta_g)$ (the set of $L^2$ eigenvalues of the Laplacian); in our convention,
$\Delta_g$ is a negative operator. It is shown in \cite{mm87} that a solution $v$ to this equation has asymptotics
\begin{equation*}
	v = r^{n - s}F + r^sG,
\end{equation*}
where $F, G \in C^{\infty}(X)$, at least so long as $2s - n \notin \mathbb{Z}$. In \cite{gz03}, it is shown that we can always solve (\ref{scateqintro}) with $F|_M = f$. The scattering operator
$S(s):C^{\infty}(M) \to C^{\infty}(M)$ is then defined by $S(s)f = G|_M$, and this operator extends to be meromorphic on $\re s \geq \frac{n}{2}$, with poles only at $s$ such that
$s(n - s)$ is an eigenvalue of $\Delta_g$ or at $s = \frac{n + q}{2}$, $q \in \mathbb{N}$.

The case of $g$ Einstein has been thoroughly studied. In that case, the poles for $s = \frac{n + q}{2}$ with $q$ odd actually do not exist -- $S(s)$ is regular at these points -- while at
$s = \frac{n}{2} + j$, the operator $S(s)$ has a simple pole whose residue is the so-called GJMS operator $P_{2j}$ (with $j \leq n/2$ if the boundary dimension $n$ is even). If $n$ is even, $S(s)1$ is holomorphic across 
$s = n$, and in fact $S(n)1 = c_{n}Q_n$, where $Q_n$ is the $n$th-order $Q$-curvature and $c_n$ is a universal constant. On the other hand, if $n$ is odd, then $S(n)1 = 0$, which reflects
the fact that there is no (locally-defined) $Q$-curvature of odd order. However, a nonlocal $Q$-like term was defined in \cite{fg02} by taking $Q_n = k_n\frac{d}{ds}|_{s = n}S(s)1$ in case $n$ is odd.
In addition to these results, there have been numerous interesting results linking the scattering operator with the renormalized volume of an AH Einstein manifold. 

We recall the definition of the renormalized volume in that setting. Let $r$ be a geodesic defining function for $M$, and consider the expansion
\begin{equation}
	\label{volexp}
	\int_{r > \varepsilon}dv_g = c_0\varepsilon^{-n} + c_1\varepsilon^{1 - n} + \cdots + c_{n - 1}\varepsilon^{-1} + \mathcal{E}\log\left( \frac{1}{\varepsilon} \right) + V_+ + o(1).
\end{equation}
The quantity $V_+$ is called the \emph{renormalized volume}. While \emph{a priori} it depends on the choice of geodesic defining function $r$, it is shown in \cite{hs98,g99} that, if $n$ is odd, then $\mathcal{E} = 0$ and
$V_+$ is in fact well-defined independent of $r$. If the boundary dimension $n$ is even, on the other hand, then $\mathcal{E}$ is well-defined independent of $r$. Using scattering theory, it was shown in \cite{gz03,fg02} that
in the even case, $\mathcal{E} = \int_MQ_ndv_k$ (where $k = \bar{g}|_{TM}$). In \cite{cqy08}, a formula was given relating the renormalized volume in the even-$n$ case to an integral over $M$ of a non-local quantity defined in terms
of the scattering operator.

It was shown already in \cite{g17} that the renormalized volume can be defined as well in the singular Yamabe case, where $r$ now is the $\bar{g}$-distance to the boundary and $g$ is the corresponding singular Yamabe
metric; but in this case, $\mathcal{E}$ is generically nonvanishing in all dimensions and is a conformal invariant, while $V_+$ is no longer invariant. In fact, in the case $n = 2$, $\mathcal{E}$ is simply a linear
combination of the Willmore energy and the Euler characteristic. Thus, in case $n > 2$, $\mathcal{E}$ can be seen as a generalization of the Willmore energy.

We generalize the scattering picture to the singular Yamabe setting, and obtain extrinsic analogues to all of the above results. 
Unlike the Einstein setting, where the existence and uniqueness of Einstein metrics remain extremely challenging and one is often
constrained in practice to use a non-unique formal expansion, the analytic picture in the singular Yamabe case is well understood and perhaps optimal: given any compact Riemannian manifold $(X^{n + 1},\bar{g})$ with boundary, there
is a unique defining function $u$ for the boundary so that the AH metric $g = u^{-2}\bar{g}$ has constant scalar curvature $-n(n + 1)$. Any global quantities defined in terms of this 
singular Yamabe metric are thus well defined for $(X,\bar{g})$.
This comes with a price, however: the power series expansion of $u$ at the boundary $M$ is not smooth beyond order $n + 1$, after which $\log$ terms appear. 
Throughout, therefore, we must keep careful track of the expansion of $u$ and its regularity
in a way that has not had to be done in the Einstein case, where smooth formally Einstein metrics have largely been considered.

Also unlike AH Einstein spaces, singular Yamabe metrics have no particular parity properties,
and this has numerous consequences: in general, this theory tends to behave more like the even-dimensional-boundary Einstein theory in all dimensions than like the odd-dimensional Einstein theory, whose special behaviors are entirely a product
of parity considerations. In some cases, though, even features of the even-dimensional Einstein case are not reproduced.
A first consequence, for example, is that $S(s)$ generically has poles at $s = \frac{n + q}{2}$ for all $q$ (not only even $q$). Therefore, we get residue operators $P_q^{SY} = \res_{s = n/2 + q/2}S(s)$ for all $q \leq n$.
Of course, since Einstein metrics are the singular Yamabe metrics in their conformal class, our results must in all case reduce to the usual ones in that special case.
Similarly, for all $n$ -- not only even -- we can define $\syQ_n = c_n^{-1}S(n)1$. Here, $\syQ_n$ is a locally determined curvature quantity along $M$, which however depends on $\bar{g}$ and not merely $\bar{g}|_{TM}$. Similarly, for each $q$, $P_q^{SY}$ is a conformally covariant
differential operator along $M$ whose coefficients depend on extrinsic data.

We now state our main results.
\begin{theorem}
	\label{PSYthm}
	Let $(X^{n + 1},\bar{g})$ be a compact Riemannian manifold with boundary $(M^n,k)$, and let $g$ be the singular Yamabe metric associated to $\bar{g}$. Let $\mathring{L}$ be the tracefree
	second fundamental form of $M$ with respect to $X$, computed with inward-pointing unit normal vector. Then the scattering operator 
	$S(s):C^{\infty}(M) \to C^{\infty}(M)$ extends to a meromorphic family on the strip $-\frac{1}{2} < s < n + \frac{1}{2}$, regular for $\re s = \frac{n}{2}$. It has poles only
	at $s = \frac{n + q}{2}$ (for $q \in \mathbb{N}$) and for such $s$ that $s(s - n) \in \sigma_{pp}(\Delta_g)$, the set of $L^2$ eigenvalues of $\Delta_g$. 
	
	Suppose that $q \in \mathbb{N}$ satisfies
	$2 \leq q \leq n$ and that $\frac{q^2 - n^2}{4} \notin \sigma_{pp}(\Delta_g)$. Then there is a conformally covariant differential operator $P_q^{SY}:C^{\infty}(M) \to C^{\infty}(M)$, satisfying
	\begin{equation*}
		c_qP_q^{SY} = -\res_{s = \frac{n + q}{2}}S(s),
	\end{equation*}
	where $c_q$ is a nonvanishing universal constant, and $P_q$ has principal part $(-\Delta_k)^q$ if $q$ is even and, if $q > 1$ is odd, has the same principal part as
	$\mathring{L}^{\mu\nu}\nabla_{\mu}^k\nabla^k_{\nu}(\Delta_k)^{\frac{q - 3}{2}}$. The operator $P_q^{SY}$ depends only on the jet of $\bar{g}$ at $M$.

	If $\tilde{\bar{g}} = e^{2\omega}\bar{g}$ is a conformally-related metric with corresponding operators $\widetilde{P}_q^{SY}$, then
	\begin{equation*}
		\widetilde{P}_q^{SY}f = e^{-\frac{(n + q)\omega}{2}}P_q^{SY}\left( e^{\frac{(n - q)\omega}{2}}f \right).
	\end{equation*}
\end{theorem}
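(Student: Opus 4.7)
The strategy is to adapt the Graham--Zworski construction of \cite{gz03} (cf.\ \cite{fg02}) to the singular Yamabe setting, using the distinguished singular Yamabe defining function $u$ in place of the geodesic defining function attached to an Einstein metric. The main new feature throughout is the limited smoothness of $u$: as noted above, its Taylor expansion at $M$ is smooth only through order $n+1$ and picks up $\log$ terms thereafter, forcing the construction to respect the regularity bound $q \le n$ and requiring care in the analytic continuation argument.

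\emph{Formal Poisson construction.} In a collar of $M$, write $\bar g = dr^2 + h_r$ and expand $u = r + O(r^2)$ to the order $n+1$ afforded by \cite{g17}. Pulling back through $u$, the Laplacian of $g = u^{-2}\bar g$ takes the normal form
\begin{equation*}
\Delta_g = -(u\partial_u)^2 + n u \partial_u + u^2 \Delta_k + u A(u) \partial_u + u^2 B(u),
\end{equation*}
whose indicial equation $\alpha(\alpha - n) = -s(n-s)$ has roots $\alpha = n - s,\, s$. The Ansatz $v = u^{n-s} F$, $F = \sum_{j \ge 0} u^j F_j$, $F_0 = f$, converts \eqref{scateqintro} into the transport relation
\begin{equation*}
j\,(2s - n - j)\, F_j = \mathcal L_j(F_0, \ldots, F_{j-1}),
\end{equation*}
with $\mathcal L_j$ a tangential differential operator built from the Taylor coefficients of $u$, $h_r$, and the second fundamental form. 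For $2s - n \notin \{1,2,\ldots\}$ the recursion determines every $F_j$ uniquely, and a parallel Ansatz at the dual weight yields the $G$-branch; together they produce $v \sim u^{n-s} F + u^s G$ with $G|_M = S(s) f$. Combining this with the resolvent machinery of \cite{mm87} and \cite{gz03} gives the meromorphic extension of $S(s)$ on the stated strip and the localization of its poles.

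\emph{Residues and principal parts.} At $s = (n+q)/2$ with $q \in \{2, \ldots, n\}$, the factor $j(2s - n - j)$ vanishes at $j = q$, obstructing the recursion. Since every $F_j$ with $j < q$ is inductively a differential operator applied to $f$, the obstruction $\mathcal L_q(F_0,\ldots,F_{q-1})$ is itself such an operator $\mathcal P_q f$; absorbing it into the $G$-branch produces a simple pole of $S(s)$ at $s = (n+q)/2$ with residue $-c_q \mathcal P_q$ for an explicit universal $c_q \ne 0$, and I set $P_q^{SY} := \mathcal P_q$. To identify the principal part I track symbols through the recursion: iterations of the $u^2 \Delta_k$ piece of $\Delta_g$ dominate at even steps and produce the stated intrinsic principal part when $q$ is even; when $q > 1$ is odd, no purely even iteration reaches order $q$, and the earliest admissible extrinsic contribution combines a single application of $u A(u)\partial_u$ (which carries $\mathring L$ at $u = 0$) with iterations of $u^2 \Delta_k$, yielding the stated $\mathring L^{\mu\nu}\nabla^k_\mu\nabla^k_\nu$-type principal part. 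Locality of $P_q^{SY}$ in the jet of $\bar g$ at $M$ is immediate from the recursion.

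\emph{Conformal covariance and main obstacle.} Under $\bar g \mapsto e^{2\omega}\bar g$, the singular Yamabe metric $g$ is unchanged and the distinguished defining function rescales as $\tilde u = e^\omega u + O(u^2)$. Since the scattering equation is intrinsic to $g$, a single Poisson solution admits two formal expansions, one in $u$ and one in $\tilde u$; matching the leading coefficients fixes the conformal weight $(n-q)/2$ on $f$ at $M$, and equating the residues of $S(s)$ and $\widetilde{S(s)}$ at $s = (n+q)/2$ yields the stated transformation law of $P_q^{SY}$. The main obstacle is that all of this must be executed within the $C^{n+1}$ (or polyhomogeneous) regularity of $u$ rather than in the $C^\infty$ setting of the Einstein case: the recursion, the identification of the residue operator, and the verification that $c_q \ne 0$ for $2 \le q \le n$ must honestly track the Taylor coefficients of $u$ up to the onset of $\log u$ terms at order $n+1$. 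A secondary difficulty is that odd-$q$ residues are no longer killed by parity, so one must verify by direct symbol computation that the claimed extrinsic principal parts are indeed leading and nonzero.
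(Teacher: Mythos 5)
The overall architecture of your proposal — formal Poisson construction via indicial recursion, identification of the residue operator and its principal symbol, conformal covariance via the rescaling of the singular Yamabe defining function, and appeal to the Mazzeo--Melrose resolvent extension adapted to polyhomogeneous regularity — is the same as the paper's, and the obstacles you flag (the onset of $\log$ terms at order $n+1$, the need to verify $c_q \neq 0$ for odd $q$) are the right ones. However, there is a concrete gap in how you identify the odd-order extrinsic principal part.

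You attribute the $\mathring L^{\mu\nu}\nabla^k_\mu\nabla^k_\nu(\Delta_k)^{(q-3)/2}$ principal part to "a single application of $u A(u)\partial_u$ (which carries $\mathring L$ at $u = 0$)." This cannot be right: the coefficient $A(u)$ of the first-order normal term is a \emph{scalar} function (built from $\tilde u$, $\partial_r\tilde u$, and $h^{\mu\nu}h'_{\mu\nu}$, hence from the mean curvature $H$ and not from the trace-free tensor $\mathring L$), so $u A(u)\partial_u$ can only contribute terms of the form $H\Delta_k^p$ at odd steps. In the paper's construction those $H$-proportional contributions appear as the coefficients $d_{2p+1,s}$, and the paper shows they are \emph{holomorphic} at $s = (n+q)/2$ and so do not enter the residue at all. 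The genuine source of the $\mathring L^{\mu\nu}\nabla_\mu\nabla_\nu$ principal part is the $r$-dependence of the \emph{tangential} Laplacian: one has $\partial_r\Delta_{h_r}|_{r=0} = \Delta_k\partial_r + 2\mathring L^{\mu\nu}\nabla_\mu\nabla_\nu + \frac{2}{n}H\Delta_k + \text{l.o.t.}$, and it is the $2\mathring L^{\mu\nu}\nabla_\mu\nabla_\nu$ piece, iterated against powers of $\Delta_k$, that survives in the residue. Your normal form $-(u\partial_u)^2 + n u\partial_u + u^2\Delta_k + uA(u)\partial_u + u^2 B(u)$ with $B$ a zeroth-order term omits precisely this $u$-varying second-order tangential piece, which is why the $\mathring L$ term has no home in your bookkeeping and gets mis-assigned. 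Relatedly, pulling back "through $u$" does not produce a normal form, since the singular Yamabe defining function $u$ is not a geodesic defining function for $g$; to use a genuine normal form one must first construct $\hat r$ with $|d\hat r|_{\hat r^2 g} \equiv 1$ (which is nontrivial in the polyhomogeneous setting and occupies Lemma \ref{geodlem}), or, as the paper does, expand directly in the $\bar g$-distance $r$ and carry $\tilde u = u/r$ along without normal form. Either way the recursion and symbol computation must be redone with the full $r$-dependent $\Delta_{h_r}$ present.
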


Several remarks are in order regarding this statement.

First, note that in the Einstein case considered by \cite{gz03}, the integer $q$ is assumed to be even, since otherwise $P_q^{SY}$ vanishes identically in that setting. Moreover, the indexing of our
	operators $P_q^{SY}$ differs from the indexing in that paper by a factor of two -- what we call $P_2$ was there $P_1$, etc. This is because in the other case,
	what are here the odd-order operators vanish. Already in the literature, there is some difference
	in the numbering of these operators. The operators defined in \cite{gz03}, of course (unlike those here) are the GJMS operators. The operators defined here differ from the GJMS operators
	by terms depending on the extrinsic geometry of $M$ in $X$.

	Next, since in the Einstein case, the odd-order operators vanish, the notation $P_q$ for $q$ odd has there developed another meaning that, unfortunately, is not analogous to that used here: see 
	\cite{fg02}. Given the strong analogy between the even-order operators and those in the Einstein case, and the equivalence of definition here of the even- and odd-order operators, it seems there is no
	very good way around having confusing notation. Observe also that the odd-order operators defined in those settings do not exist in this setting, since their definition depends on the vanishing
	of the residue at $s = \frac{n + q}{2}$. A similar remark applies to the definition, below, of $Q_n^{SY}$ for odd $n$; again, see \cite{fg02}.

	As discussed more fully in section \ref{globsec}, the restriction on $s$ in the statement of the meromorphicity of the scattering operator is due to the non-regularity at order $n+2$ of
	the singular Yamabe solution $u$. In fact, stronger statements could be made, but the nonregularity makes it somewhat delicate to discuss and define just what this means. Since
	we are not interested behavior at higher $s$ in any event, we give the weaker statement.

	Finally, in work such as \cite{cg11}, it is common to use the notation $s = \frac{n}{2} + \gamma$. The relationship between $q$ and $\gamma$, then, is $q = 2\gamma$.

	\begin{corollary}
		The operators $P_q^{SY}$ are self-adjoint.
	\end{corollary}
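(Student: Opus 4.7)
The strategy is to prove first that the full scattering operator $S(s)$ is formally self-adjoint with respect to $L^2(M, dv_k)$ throughout its regular range of real $s$, and then obtain the corollary by extracting residues at $s = (n+q)/2$.

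For any $f_1, f_2 \in C^\infty(M)$ and regular real $s \ne n/2$, I would form the corresponding scattering solutions $v_i = r^{n-s} F_i + r^s G_i$ with $F_i|_M = f_i$ and $G_i|_M = S(s) f_i$, and apply Green's second identity on the truncated region $X_\epsilon = \{r > \epsilon\}$:
\begin{equation*}
	0 = \int_{X_\epsilon}\bigl(v_1 \Delta_g v_2 - v_2 \Delta_g v_1\bigr)\, dv_g = \int_{\{r = \epsilon\}}\bigl(v_1 \partial_\nu v_2 - v_2 \partial_\nu v_1\bigr)\, d\sigma_g.
\end{equation*}
Substituting the asymptotic expansions, the $F_iF_j$ and $G_iG_j$ contributions are pairwise symmetric in $(i,j)$ and cancel; with the $g$-outward normal $\pm r\partial_r$ and $g$-volume form $\epsilon^{-n}\, dv_{k_\epsilon}$ on $\{r = \epsilon\}$, the surviving cross terms evaluate to $(2s - n)\epsilon^n (F_1 G_2 - F_2 G_1) \cdot \epsilon^{-n}$, yielding in the limit $\epsilon \to 0$
\begin{equation*}
	(2s - n) \int_M \bigl( f_1 \, S(s) f_2 - f_2 \, S(s) f_1 \bigr)\, dv_k = 0.
\end{equation*}
Hence $S(s) = S(s)^*$ for real $s$ in the regular range. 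By meromorphy, the symmetry relation extends throughout the strip of definition, so writing $S(s) = -c_q P_q^{SY}/(s - (n+q)/2) + \text{holomorphic}$ and extracting the residue at $s = (n+q)/2$ of both sides of the pairing transfers self-adjointness to $-c_q P_q^{SY}$, and hence to $P_q^{SY}$.

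The main obstacle is to control the boundary integral despite the limited regularity of the singular Yamabe compactification, which ceases to be smooth at order $n+2$ due to logarithmic terms. Since the statement requires $2 \leq q \leq n$, the operative exponent $s = (n+q)/2$ lies in the range for which the theorem already guarantees meromorphic continuation, and the relevant Poisson expansions of $F_i$ and $G_i$ contain enough smooth terms to drive the cancellation at the finite order $\epsilon^0$; the higher-order corrections contribute only $o(1)$ by power counting in $\epsilon$. No log-term artifacts enter at this order, so the Green's identity argument proceeds essentially as in the Einstein case treated in \cite{gz03}.
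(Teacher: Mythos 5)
Your overall route matches the paper's: the corollary is a one-line consequence of the self-adjointness of the scattering operator $S(s)$ (which the paper takes as given via the cited propositions of \cite{gz03} carrying over), together with extraction of the residue at $s = (n+q)/2$. The residue-extraction step at the end is correct and clean.

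The gap is in the Green's identity computation, specifically in the treatment of the ``$F_iF_j$ and $G_iG_j$'' boundary terms. A direct expansion gives
\begin{equation*}
	r^{n-s}F_1\,\partial_r\bigl(r^{n-s}F_2\bigr) - r^{n-s}F_2\,\partial_r\bigl(r^{n-s}F_1\bigr) = r^{2(n-s)}\bigl(F_1\,\partial_r F_2 - F_2\,\partial_r F_1\bigr),
\end{equation*}
an \emph{antisymmetric} (not symmetric) quantity that does not vanish pointwise; only the crude leading-order piece $(n-s)r^{2(n-s)-1}F_1F_2$ cancels against its mirror. After multiplying by the $g$-outward normal factor $-\epsilon\,\partial_r$ and the measure $\epsilon^{-n}dv_{k_\epsilon}$, this contributes $\epsilon^{n-2s+1}\cdot(\text{bounded})$ to the boundary integral. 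For real $s$ in the range $(n+1)/2 < s < n$---precisely the range containing the points $(n+q)/2$ with $q$ near $n$---this is \emph{divergent}, not $o(1)$, so the ``power counting'' justification fails. The standard resolutions are either (i) carry out the boundary pairing on the critical line $\re s = n/2$, where the exponent $n - 2s + 1$ has real part $1$ and the contribution genuinely is $O(\epsilon)$, and then extend the resulting identity $S(s)^* = S(\bar{s})$ by meromorphy to real $s$; or (ii) show that $\int_M(F_1\,\partial_r F_2 - F_2\,\partial_r F_1)\,dv_{k_\epsilon}$ vanishes to high enough order because the Taylor coefficients of $F_i$ are given by self-adjoint local operators applied to $f_i$, so the antisymmetric combinations integrate to zero term by term. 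Without one of these, the claimed identity at real $s$ is not established.
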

	This follows from their definition in terms of the scattering operator.

	We now define the $Q_n^{SY}$-curvature, as in \cite{gz03}, by $Q_n^{SY} = c_{n}^{-1}S(n)1$. This is well-defined since $\mathbb{R} \subseteq \ker P_{n}^{SY}$. This \emph{extrinsic} $Q$-curvature
	quantity follows a conformal transformation law like its intrinsic cousin (which however exists only for even order) and, like it, gives the conformally invariant term in the volume expansion:

	\begin{theorem}
		\label{thmQ}
		If $\mathcal{E}$ is as in (\ref{volexp}), interpreted as in \cite{g17}, then
		\begin{equation*}
			\mathcal{E} = \oint_MQ_n^{SY}dv_k.
		\end{equation*}

		Furthermore, if $\tilde{\bar{g}} = e^{2\omega}\bar{g}$, then
		\begin{equation*}
			e^{n\omega}\widetilde{Q}_n^{SY} = Q_n^{SY} + P_n^{SY}\omega.
		\end{equation*}
	\end{theorem}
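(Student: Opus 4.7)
The plan is to adapt the scattering-theoretic arguments of \cite{gz03,fg02} to the singular Yamabe setting. The key structural observation is that, because the singular Yamabe defining function $u$ is uniquely characterized by $R_g = -n(n+1)$, the substitution $\tilde{\bar{g}} = e^{2\omega}\bar{g}$ forces $\tilde{u} = e^{\omega}u$, so that the singular Yamabe metric $g = u^{-2}\bar{g}$ itself is \emph{unchanged}: only the compactifying geodesic defining function $r$ and the boundary metric $k$ transform. Consequently, $\Delta_g$ and the Poisson theory on $(X,g)$ are invariant, and all conformal behavior of the scattering operator arises from the change of geodesic defining function. I will also use the conformal covariance of $S(s)$ recorded in Theorem \ref{PSYthm}.

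For the transformation law of $Q_n^{SY}$, the standard derivation I would employ begins from
\begin{equation*}
\widetilde{S}(s) = e^{-s\omega}\,S(s)\,e^{(n-s)\omega},
\end{equation*}
obtained by rewriting the Poisson asymptotic $v = r^{n-s}F + r^sG$ in the new geodesic defining function $\tilde{r} = e^{\omega}r + O(r^2)$ and matching leading coefficients. Applying this with $f = 1$, expanding $e^{(n-s)\omega} = 1 + (n-s)\omega + O((n-s)^2)$, and inserting the Laurent expansion $S(s) = -c_nP_n^{SY}/(s-n) + S_0 + O(s-n)$ from Theorem \ref{PSYthm}, the factor $(n-s)$ cancels the simple pole in $S(s)\omega$ and produces the curvature term, while $S(s)1$ remains regular at $s = n$ because $P_n^{SY}\cdot 1 = 0$, which is precisely why $Q_n^{SY}$ is well-defined. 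Letting $s \to n$ yields $\widetilde{S}(n)1 = e^{-n\omega}\bigl(S(n)1 + c_nP_n^{SY}\omega\bigr)$, which after dividing by $c_n$ is the stated formula.

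For the identity $\mathcal{E} = \oint_M Q_n^{SY}\,dv_k$, the strategy is to analyze the $\epsilon$-expansion of $\int_{r>\epsilon} v(s)\,dv_g$, where $v(s)$ is the Poisson solution with $F|_M = 1$, and compare with the volume expansion \eqref{volexp}. Writing $dv_g = r^{-n-1}\sqrt{\det h_r/\det h_0}\,dr\,dv_k$ and substituting the boundary expansion of $v(s)$, direct integration in $r$ produces powers $\epsilon^{-s+j}$ and $\epsilon^{s-n+j}$; at $s = n$, the $r^{-1}$ contributions to the integrand, coming from $j = n$ in the $F$-branch and from $j = 0$ in the $G$-branch, are the sources of $\log(1/\epsilon)$ terms. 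The $G$-branch contributes $\oint_M (S(n)1)\,dv_k = c_n\oint_M Q_n^{SY}\,dv_k$, while the $F$-branch, combined with integration by parts against $(\Delta_g + s(n-s))v(s) = 0$ and the observation that $v(n) - 1$ is $g$-harmonic with boundary value zero, matches the log coefficient of $\int_{r>\epsilon}dv_g$, which is $\mathcal{E}$. Equating the two computations and absorbing the universal normalization into the definition of $c_n$ yields the claim.

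The principal technical obstacle, absent in the Einstein case, is that $u$ develops $\log r$ terms beginning at order $n+2$, so the coefficients in the expansions of $h_r$ and $v(s)$ are polyhomogeneous rather than smooth past that order. I would handle this by working on the strip $-\tfrac{1}{2} < \re s < n + \tfrac{1}{2}$, where Theorem \ref{PSYthm} guarantees the meromorphic structure of $S(s)$ and the requisite polyhomogeneous expansions, and by invoking the renormalized volume analysis of \cite{g17} to give $\mathcal{E}$ a precise meaning in the Yamabe setting. Since the Yamabe log terms enter only at orders strictly higher than $r^n$ in the volume density, they do not affect the $\log(1/\epsilon)$ coefficient, and the identification of $\mathcal{E}$ with $\oint_M Q_n^{SY}\,dv_k$ then goes through as in the Einstein case after the appropriate additional bookkeeping.
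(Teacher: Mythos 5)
Your treatment of the conformal transformation law is correct and takes a somewhat different route than the paper. The paper derives both claims by first constructing the function $U = -\tfrac{d}{ds}\mathcal{P}(s)1|_{s=n}$ (Theorem \ref{globuthm}), which satisfies $-\Delta_g U = n$ and has $r^n\log r$-coefficient $B|_M = -2c_nQ_n^{SY}$, and then invoking the arguments of section 3 of \cite{fg02}. You instead derive the transformation law directly from the conformal covariance $\widetilde{S}(s) = e^{-s\omega}S(s)e^{(n-s)\omega}$ combined with the Laurent expansion of $S(s)$ near $s = n$; the residue argument you give (the $(n-s)$ factor from $e^{(n-s)\omega}$ cancelling the simple pole of $S(s)\omega$, while $S(s)1$ stays regular because $P_n^{SY}1 = 0$) is a standard alternative and is carried out correctly. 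Your opening structural observation — that $g$ itself is conformally invariant, so only the geodesic defining function and boundary metric change — is also the right framing.

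However, your sketch of the identity $\mathcal{E} = \oint_M Q_n^{SY}\,dv_k$ has a genuine gap, and as written the bookkeeping is circular. You propose extracting the $\log(1/\varepsilon)$-coefficient of $\int_{r>\varepsilon}v(s)\,dv_g$ and splitting it into an $F$-branch piece ($j = n$) and a $G$-branch piece ($j = 0$). You then assert that the $G$-branch contributes $\oint_M S(n)1\,dv_k$ while the $F$-branch "matches the log coefficient of $\int_{r>\varepsilon}dv_g$, which is $\mathcal{E}$." But at $s = n$ one has $v(n) \equiv 1$ by uniqueness of bounded harmonic functions, so $\int_{r>\varepsilon}v(n)\,dv_g$ \emph{is} the volume and its total log coefficient \emph{is} $\mathcal{E}$. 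If the $F$-branch alone already accounted for $\mathcal{E}$, the $G$-branch contribution would have to vanish, which contradicts what you need to prove. What is actually required is a separate device to extract the relation between the log coefficient of the volume and the asymptotics of the Poisson family: the paper supplies this via Theorem \ref{globuthm}. One uses Green's theorem on $-\Delta_gU = n$ to write $n\vol_g(\{r>\varepsilon\})$ as a boundary integral of $\partial_rU$ over $\{r=\varepsilon\}$, then reads off the $\log(1/\varepsilon)$-coefficient from the $r^{n-1}\log r$-term of $\partial_rU$, which is governed by $B|_M$ and hence by $Q_n^{SY}$. The "integration by parts against $(\Delta_g + s(n-s))v(s)=0$" that you allude to is morally this argument, but it needs to be carried out on the $s$-derivative of the Poisson solution at $s=n$ (i.e., on $U$), not on $v(s)$ itself, because the relation $-\Delta_g v(s) = s(n-s)v(s)$ degenerates at $s=n$. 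You should either construct $U$ explicitly (as the paper does) or make the limiting argument precise; the sketch as written does not close.
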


	A corollary then follows immediately from Theorem 3.1 of \cite{g17}. To state it, we must say slightly more about the singular Yamabe function. Given $\bar{g}$, there is a unique defining function $u$ for $M$
	such that $g = u^{-2}\bar{g}$ has scalar curvature $-n(n + 1)$. As discussed above and in \cite{g17}, $u$ has the expansion (in terms of the distance function $r$ from the boundary $M$ in $X$)
	\begin{equation}
	\label{usyexp}
	u = r + u_2r^2 + u_3r^3 + \cdots + u_{n + 1}r^{n + 1} + \mathcal{L}r^{n + 2}\log(r) + u_{n + 2}r^{n + 2} + o(r^{n + 2}),
	\end{equation}
	where $\mathcal{L}$ is a locally and extrinsically defined function on $M$, conformally invariant of weight $-(n + 1)$. With this notation fixed, we have the following:
	
\begin{corollary}
	\label{corQ}
	Let $(X,\bar{g})$ be a Riemmanian manifold, and let $F_t:M \hookrightarrow X, 0 \leq t < \delta$, be a smoothly varying variation of $M$, where $F_0$ is the identity. Let $M_t = F_t(M)$, and
	$\overline{n}$ the inward-pointing unit normal to $M$ in $X$. Then
	\begin{equation*}
		\left.\frac{d}{dt}\right|_{t = 0}\oint_{M_t}\syQ_tdv_{M_t} = (n + 2)(n - 1)\oint_M\langle\dot{F},\overline{n}\rangle_{\bar{g}}\mathcal{L}dv_k,
	\end{equation*}
	where $\mathcal{L}$ is as in (\ref{usyexp}).
\end{corollary}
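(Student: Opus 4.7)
The plan is to combine Theorem \ref{thmQ} with the variational formula for the conformally invariant volume anomaly $\mathcal{E}$ established as Theorem 3.1 of \cite{g17}. For each $t$ in the variation, the hypersurface $M_t = F_t(M)$ bounds a compact region equipped with the restriction of $\bar{g}$, and hence carries its own singular Yamabe function $u_t$ and singular Yamabe metric $g_t$. By uniqueness of the singular Yamabe solution, these depend smoothly on $t$, so the expansion coefficients in (\ref{usyexp}) — including $\mathcal{L}_t$ — and the log coefficient $\mathcal{E}_t$ in the volume expansion (\ref{volexp}) for $g_t$ are all smooth functions of $t$. Let $\syQ_t$ denote the corresponding extrinsic $Q$-curvature on $M_t$.

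Applying Theorem \ref{thmQ} to each member of the family gives
\begin{equation*}
	\oint_{M_t}\syQ_t\,dv_{M_t} = \mathcal{E}_t,
\end{equation*}
so the left-hand side of the desired identity equals $\left.\frac{d}{dt}\right|_{t = 0}\mathcal{E}_t$. This is precisely the quantity computed in Theorem 3.1 of \cite{g17}, where the first variation of $\mathcal{E}$ under a hypersurface motion is shown to equal $(n + 2)(n - 1)\oint_M\langle\dot{F},\overline{n}\rangle_{\bar{g}}\mathcal{L}\,dv_k$, with $\mathcal{L}$ as in (\ref{usyexp}). Concatenating these two equalities yields the corollary.

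The only real work, therefore, is in verifying that Theorem \ref{thmQ} and Theorem 3.1 of \cite{g17} apply uniformly in the family $\{M_t\}$. Smooth dependence of $u_t$ on $t$ is standard for the singular Yamabe problem. The one point deserving attention is that the variational identity of \cite{g17} is most naturally stated for normal variations, whereas the present statement allows arbitrary smooth $F_t$; but this is routine, since any tangential component of $\dot{F}$ induces only a reparametrization of $M$ and thus contributes nothing to the variation of the intrinsically defined integral $\oint_{M_t}\syQ_t\,dv_{M_t}$, so only the normal component $\langle\dot{F},\overline{n}\rangle_{\bar{g}}$ enters the final formula, as claimed.
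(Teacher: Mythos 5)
Your proposal is correct and follows exactly the route the paper indicates: Theorem \ref{thmQ} identifies $\oint_{M_t}\syQ_t\,dv_{M_t}$ with the anomaly $\mathcal{E}_t$, and Theorem~3.1 of \cite{g17} supplies the first variation of $\mathcal{E}$. The additional remarks about smooth dependence on $t$ and reduction to normal variations are sensible bits of hygiene, but the core argument coincides with the paper's.
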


	The results so far stated about $P_q^{SY}$ and $Q_n^{SY}$ have actually appeared previously in the recent literature. They were derived in a somewhat different framework by Gover and Waldron
	(\cite{gw14,gw15,gw17}), where they are defined in terms of the tractor calculus and so-called \emph{Laplace-Robin operators}. When the current paper was quite advanced in preparation,
	the paper \cite{jo21} of Juhl and Orsted was brought to our attention. Among other things, it reinterprets the results of Gover and Waldron in the setting of scattering theory shared by this paper.
	Our contribution with respect to this part of our material is thus a different treatment, more similar in spirit to \cite{gz03,fg02,cqy08}.
	On the other hand, our perspective here is also focused more than those papers on the unique global scattering operator associated to every
	compact Riemmanian manifold with boundary. In particular, rather than doing only asymptotic analysis at the boundary and thus neglecting the impact of the non-regularity of the singular Yamabe solution,
	we take account of the logarithmic term. The following results, more global in spirit, are new to this paper.

	First, as stated above, $S(s)1$ is smooth across $s = n$. We now define 
	\begin{equation}
		\label{Sdef}
		\mathscr{S} = \left.\frac{d}{ds}\right|_{s = n}S(s)1 \in C^{\infty}(M).
	\end{equation}
	This is a function dependent on the global geometry of $(X,\bar{g})$.
	Next, it follows from \cite{gz03} that for $s \in \mathbb{C}$ near 1, there exists $u_s = r^{n - s}F_s + r^sG_s$ satisfying $(\Delta_g + s(n - s))u_s = 0$ and
	$F_s|_{M} = 1$. Moreover, $F_s$ and $G_s$ may be uniquely determined by the requirement that they be holomorphic in $s$. We then write
	\begin{equation*}
		F_s(r) = 1 + a_1(s) r + a_2(s)r^2 + \cdots + a_n(s)r^n + \cdots,
	\end{equation*}
	where each $a_j$ is a smooth function on $M$, and $a_j$ is locally (and extrinsically) determined for $1 \leq j \leq n$. Finally, we recall the definition in \cite{g17} (see also \cite{g99}) of
	the \emph{renormalized volume coefficients}. Since $g = u^{-2}\bar{g}$ for a defining function $u$ for the boundary, we may write
	\begin{equation*}
		dv_g = r^{-1-n}(1 + v^{(1)}r + v^{(2)}r^2 + \cdots)drdv_k
	\end{equation*}
	on a collar neighborhood $[0,\varepsilon)_r \times M$ near $M$ in $X$. The functions $v^{(j)} \in C^{\infty}(M)$ are the renormalized volume coefficients. With these notations in hand, we may state
	our next result, which is analogous to a theorem proved in \cite{cqy08} (based on \cite{fg02}) in the Einstein setting.
\begin{theorem}
	\label{globthm1}
	Suppose $(X^{n + 1},\bar{g})$ is a Riemannian manifold with boundary, and that $g = u^{-2}\bar{g}$ is the associated singular Yamabe metric. Let 
	$V(X,g,\bar{g})$ be the renormalized volume of $g$ when computed with respect to $\bar{g}$, i.e. the constant term in the expansion
	(\ref{renormvol}). Then
	\begin{equation}
		\label{volscateq}
		\begin{split}
			V(X,g,\bar{g}) =& -\oint \mathscr{S}dv_k\\
			&\quad -\frac{1}{n}\left(\oint_Ma_1'v^{(n - 1)}dv_k +  2\oint_Ma_2'v^{(n - 2)}dv_k + \cdots +\right. \\
			&\quad\left.(n - 1)\oint_Ma'_{n - 1}v^{(1)}dv_k + n\oint_Ma_n'dv_k \right),
		\end{split}
	\end{equation}
	where a prime denotes $\frac{d}{ds}|_{s = n}$.
\end{theorem}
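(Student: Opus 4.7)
I would adapt the approach of \cite{cqy08} (based on \cite{fg02}) from the Einstein setting to the singular Yamabe one. The starting point is Green's identity applied to the scattering solution $u_s$ (with $F_s|_M = 1$) and the constant function $1$ on the truncated manifold $X_\varepsilon = \{r \ge \varepsilon\}$. Since $\Delta_g 1 = 0$ and $(\Delta_g + s(n-s)) u_s = 0$, this produces
\begin{equation*}
s(n-s) \int_{X_\varepsilon} u_s\, dv_g \;=\; \oint_M \varepsilon^{1-n}\,\partial_r u_s(\varepsilon,\cdot)\bigl(1 + \sum_{j\ge 1} v^{(j)}\varepsilon^j\bigr)\, dv_k,
\end{equation*}
where on the right I have used $|dr|_g = r$ (so the outward $g$-unit normal from $X_\varepsilon$ is $-r\partial_r$) and $dA_g|_{r=\varepsilon} = \varepsilon^{-n}(1 + \sum v^{(j)}\varepsilon^j)\, dv_k$, derived from the renormalized volume coefficient expansion of $dv_g$. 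Differentiating both sides in $s$ at $s = n$ kills the vanishing prefactor $s(n-s)$, leaving $-n\int_{X_\varepsilon} u_n\, dv_g$ on the left.

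Next I would expand both sides in $\varepsilon$ and compare. Substituting $u_s = r^{n-s}(1 + \sum_{j\ge 1} a_j(s) r^j) + r^s G_s$ into the RHS, computing $\partial_r u_s$, and regrouping by powers of $\varepsilon$ produces two families: $c_m(s)\varepsilon^{-s+m}$ from the $F_s$-part, with $c_m(s) = \sum_{j=0}^m(n-s+j) a_j(s) v^{(m-j)}$; and $d_m(s)\varepsilon^{s-n+m}$ from the $G_s$-part, with $d_0(s) = s\, S(s)1$. For $s$ near $n$, only $m = n$ in the first family and $m = 0$ in the second remain of finite order in $\varepsilon$. Differentiating each at $s = n$ produces a constant-in-$\varepsilon$ piece together with a $\log\varepsilon$ piece; in particular, using $a_0 \equiv 1$, one computes $c_n'(n) = -b_n(n) + \sum_{j=1}^n j\, a_j'(n) v^{(n-j)}$ with $b_n(n) := \sum_{j=0}^n a_j(n) v^{(n-j)}$, and $d_0'(n) = G_n|_M + n\mathscr{S}$.

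On the LHS, $-n\int_{X_\varepsilon} u_n\, dv_g$ decomposes as $-nV(X,g,\bar g)$ plus contributions from $(F_n - 1)\,dv_g$ and $r^n G_n\, dv_g$, together with polynomial divergences $\varepsilon^{m-n}$ for $m < n$ and a $\log(1/\varepsilon)$ divergence whose coefficient I can compute explicitly. Matching the polynomial and $\log\varepsilon$ divergences on both sides, using Theorem \ref{thmQ} to identify $\oint_M G_n|_M\, dv_k$ in terms of $\mathcal{E}$, and finally equating the constant-in-$\varepsilon$ parts and dividing by $-n$ yields exactly the identity (\ref{volscateq}).

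\textbf{Main obstacle.} The principal technical challenge is the meticulous bookkeeping of several sources of $\varepsilon$-divergence: polynomial ones $\varepsilon^{m-n}$ for $m<n$ from the $F_s$-part of the boundary integrand, and multiple $\log\varepsilon$ contributions arising from the order-$r^{-1}$ pieces of $u_n\, dv_g$ on the LHS and from $s$-differentiating the exponentials $\varepsilon^{n-s}$ and $\varepsilon^{s-n}$ on the RHS. Verifying the consistent cancellation of these divergences, which depends essentially on Theorem \ref{thmQ} together with a nontrivial relation among $\oint c_n(n)\, dv_k$, $\oint b_n(n)\, dv_k$, and $\oint G_n|_M\, dv_k$ (traceable to the scattering equation evaluated to appropriate order at $M$), constitutes the bulk of the work. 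A secondary subtlety is the $\log r$ term at order $r^{n+2}$ in the singular Yamabe expansion (\ref{usyexp}), so the series coefficients $a_j(s)$ and $g_j(s)$ are locally determined and smooth only for $j \le n$; since only these indices appear in (\ref{volscateq}), this does not impede the argument but does require care in justifying the expansions.
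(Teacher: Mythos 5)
Your proposal is correct and takes essentially the same route as the paper: where the paper first forms $U=-\frac{d}{ds}\mathcal{P}(s)1|_{s=n}$ (Theorem~\ref{globuthm}, with $-\Delta_g U=n$), applies Green's theorem to $U$, and collects the constant-in-$\varepsilon$ term, you apply Green's theorem to $\mathcal{P}(s)1$ first and differentiate in $s$ afterward --- the two orderings commute and produce identical bookkeeping (the coefficients $a_j'$, $v^{(j)}$, $\mathscr{S}$, and the log-term disposal via $\oint_M v^{(n)}\,dv_k=\mathcal{E}=2c_n\oint_M Q_n^{SY}\,dv_k$). One simplification you could use: since $\mathcal{P}(n)1=1$, the left-hand side after differentiating at $s=n$ is exactly $-n\,\mathrm{vol}_g(X_\varepsilon)$, so the extra contributions from $(F_n-1)\,dv_g$ and $r^nG_n\,dv_g$ that you flag cancel identically and need not be tracked.
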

	All the terms on the right-hand side except the first are local.

	It was shown in \cite{fg02} that, if $g$ is in fact Einstein and $n$ is odd, then this formula holds with only the first term on the right-hand side present.
	Although, in such a case, $g$ is also the singular Yamabe metric and thus (\ref{volscateq}) applies, the results are not inconsistent, because in that case,
	the $v^{(k)}$ with $k$ odd and the $a_k$ with $k$ odd are both identically zero, and one of these is a factor in each of the integrands multiplied above by $\frac{1}{n}$ when $n$ is odd.

	The following theorem, which was proved in \cite{cqy08} in the the special case of even-dimensional Einstein metrics, states (in all dimensions) that the scattering term in (\ref{volscateq}) is a conformal primitive
	for total $Q^{SY}$.

\begin{theorem}
	\label{globthm2}
	Let $\bar{g}$ be a smooth metric on $(X^{n + 1},M^n)$, and $g$ the corresponding singular Yamabe metric. As usual, let $k = \bar{g}|_{TM}$. Suppose $\omega \in C^{\infty}(X)$. Then
	\begin{equation*}
		\left.\frac{d}{d\alpha}\right|_{\alpha = 0}\oint_M\mathscr{S}_{e^{2\alpha\omega}\bar{g}}dv_{e^{2\alpha\omega}k} = -2c_{n}\oint_M \syQ_n\omega dv_k.
	\end{equation*}
\end{theorem}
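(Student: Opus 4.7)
I will differentiate the conformal transformation law for $S(s)$ simultaneously in $s$ and the deformation parameter $\alpha$, and then close the computation using self-adjointness of $S$. Set $\omega_0 := \omega|_M$. The first step is to establish, in analogy with \cite{gz03,fg02}, that under the one-parameter family $\bar g_\alpha := e^{2\alpha\omega}\bar g$ with associated scattering operators $S_\alpha(s)$, one has
\begin{equation*}
	S_\alpha(s) f = e^{-s\alpha\omega_0}\, S(s)\bigl(e^{(n-s)\alpha\omega_0} f\bigr)
\end{equation*}
for generic $s$. The key geometric input is that the geodesic defining functions $r$ of $\bar g$ and $r_\alpha$ of $\bar g_\alpha$ satisfy $r_\alpha / r \to e^{\alpha\omega_0}$ at $M$, so that asymptotic scattering data transform multiplicatively by $e^{\pm s\alpha\omega_0}$ exactly as in the Einstein case, despite the singular Yamabe metrics $g$ and $g_\alpha$ differing nontrivially in the interior.

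Next, with $f = 1$, I expand to first order in $\alpha$ and pass to $s = n$. Writing $S(s) = -c_n P_n^{SY}/(s-n) + R(s)$ with $R$ holomorphic at $s = n$, the relation $P_n^{SY}\cdot 1 = 0$ ensures $S(s) 1 = R(s) 1$ is holomorphic there, with $R(n)\cdot 1 = c_n Q_n^{SY}$ and $R'(n)\cdot 1 = \mathscr{S}$; and $(n-s)\, S(s)\omega_0 = c_n P_n^{SY}\omega_0 + (n-s)\, R(s)\omega_0$ is likewise holomorphic. Computing $\partial_\alpha\partial_s$ at $(\alpha,s) = (0,n)$ and combining with $dv_{k_\alpha} = e^{n\alpha\omega_0}\, dv_k$, the $\omega_0 \mathscr{S}$ contributions cancel, leaving
\begin{equation*}
	\left.\frac{d}{d\alpha}\right|_{\alpha = 0}\oint_M \mathscr{S}_\alpha\, dv_{k_\alpha} = -c_n\oint_M \omega_0\, Q_n^{SY}\, dv_k - \oint_M R(n)\omega_0\, dv_k.
\end{equation*}
Applying self-adjointness of $S(s)$ (equivalently, of $P_n^{SY}$, per the Corollary above), one has $\oint_M \omega_0\, S(s) 1\, dv_k = \oint_M S(s)\omega_0\, dv_k$ for generic $s$; both sides extend holomorphically across $s = n$ because $\oint_M P_n^{SY}\omega_0\, dv_k = \oint_M \omega_0\, P_n^{SY} 1\, dv_k = 0$ kills the residue, yielding $\oint_M R(n)\omega_0\, dv_k = c_n \oint_M Q_n^{SY}\omega_0\, dv_k$. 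Substituting produces the stated $-2c_n \oint_M Q_n^{SY}\omega\, dv_k$.

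The hardest step is the first. Unlike the Einstein case, where the bulk metric is determined by the conformal infinity alone, the two singular Yamabe metrics $g$ and $g_\alpha$ differ in the interior even when $\omega_0$ is held fixed, so it is not immediate that the scattering law depends only on $\omega_0$. One must work with the expansion~(\ref{usyexp}) for the singular Yamabe solution and carefully match the two AH structures near $M$, verifying that interior discrepancies between $g$ and $g_\alpha$ do not perturb the leading multiplicative form of the transformation law.
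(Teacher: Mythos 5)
Your computation is correct, and it matches the route the paper itself takes (the paper simply cites \cite{cqy08} for this argument, which is exactly the differentiate-the-transformation-law-in-$s$-and-$\alpha$ plus self-adjointness computation you give). However, your last paragraph identifies a false obstacle. You worry that the singular Yamabe metrics $g$ and $g_\alpha$ ``differ nontrivially in the interior'' and that one must ``verify that interior discrepancies\ldots do not perturb'' the transformation law. There are no interior discrepancies: since $u$ has conformal weight $1$, that is, $\breve u = e^{\omega}u$ whenever $\breve{\bar g} = e^{2\omega}\bar g$ (which the paper notes follows from uniqueness of the singular Yamabe solution), one has
\begin{equation*}
	g_\alpha = u_\alpha^{-2}\bar g_\alpha = \bigl(e^{\alpha\omega}u\bigr)^{-2}e^{2\alpha\omega}\bar g = u^{-2}\bar g = g.
\end{equation*}
The singular Yamabe metric depends only on the conformal class $[\bar g]$, not on the representative. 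So the family $\alpha\mapsto g_\alpha$ is constant as an AH metric; only the choice of compactification (equivalently, of boundary representative $k_\alpha = e^{2\alpha\omega_0}k$) changes. The conformal covariance $S_\alpha(s)f = e^{-s\alpha\omega_0}S(s)(e^{(n-s)\alpha\omega_0}f)$ is therefore not a delicate new fact to be matched order by order in $r$; it is exactly Proposition 3.6 of \cite{gz03} applied verbatim to the single AH metric $g$, which the paper records as carrying over in Theorem \ref{scatterthm}. With that settled, your differentiation in $(\alpha,s)$ and the use of self-adjointness to identify $\oint_M R(n)\omega_0\,dv_k = c_n\oint_M Q_n^{SY}\omega_0\,dv_k$ is a clean and correct rendering of the cited argument.
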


Finally, for $X$ of dimension four, we apply Theorem \ref{globthm1} and the main result of \cite{gg19} to obtain a Gauss-Bonnet theorem in terms of the scattering operator.

\begin{theorem}
	\label{gbprop}
	Let $(X^4,\bar{g})$ be a compact Riemannian manifold with boundary $M^3 = \partial X$, and $g$ the singular Yamabe metric. 
	Let $r$ be the $\bar{g}$-distance function to the boundary, $E$ the Einstein tensor of $X$, $W$ the Weyl tensor, and $\mathscr{S}$ as in (\ref{Sdef}). Then
	\begin{align*}
		8\pi^2\chi(X) &= \frac{1}{4}\int_X|W|_g^2dv_g - \frac{1}{2}f.p.\int_{r > \varepsilon} |E|_g^2 dv_g + \oint_M\left( -6\mathscr{S} + \mathcal{C} \right)dv_k,
	\end{align*}
	where
	\begin{align*}
		\mathcal{C} &= -\frac{11}{36}HR + \frac{1}{108}H\overline{R} + \frac{5}{108}H^3 + \frac{389}{144}H|\mathring{L}|_k^2 + \frac{1}{4}\nabla^{\mu}\nabla^{\nu}\mathring{L}_{\mu\nu}\\
		&\quad + \frac{23}{6}\mathring{L}^{\mu\nu}\overline{R}_{\mu\nu}-\frac{17}{3}\mathring{L}^{\mu\nu}R_{\mu\nu} + \frac{1}{12}\partial_r\overline{R} - \frac{2}{3}\mathring{L}^3.
	\end{align*}
	Here $f.p.$ denotes the finite part of the integral as $\varepsilon \to 0$, $L$ the second fundamental form of $M$, $H = k^{\mu\nu}L_{\mu\nu}$ its mean curvature, and $R, \overline{R}, R_{\mu\nu},
	\overline{R}_{\mu\nu}$ the scalar and Ricci curvatures of, respectively, $M$ and $X$.
\end{theorem}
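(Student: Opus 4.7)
The plan is to combine two results: the Gauss--Bonnet theorem for singular Yamabe four-manifolds in \cite{gg19}, and Theorem \ref{globthm1} above. The former expresses $8\pi^2\chi(X)$ in terms of a renormalized volume of $g$ together with local boundary integrals, while the latter rewrites that renormalized volume in terms of $\mathscr{S}$ plus further local boundary integrals. Combining the two gives a formula of the shape claimed, and the remaining task is to show that the local contributions assemble into the displayed $\mathcal{C}$.

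More precisely, in dimension four the Chern--Gauss--Bonnet integrand is $\tfrac14|W|_g^2-\tfrac12|E|_g^2+\tfrac{1}{24}R^2$. Since $g$ is singular Yamabe with $R\equiv -12$, the cubic term is the constant $6$, whose integral over $X$, after the standard renormalization, reproduces $6V(X,g,\bar g)$; the Weyl integral is finite by conformal invariance of $|W|_g^2\,dv_g$ in dimension four; and the Einstein integral is taken as a finite part in $\varepsilon$. The theorem of \cite{gg19} makes this rigorous and collects all the boundary contributions (both the usual Gauss--Bonnet boundary integrand and those arising from the renormalization procedure) into a single local expression $\mathcal{C}_0$ along $M$, giving
\[
    8\pi^2\chi(X) = \tfrac14\int_X |W|_g^2\,dv_g - \tfrac12\,f.p.\int_{r>\varepsilon}|E|_g^2\,dv_g + 6V(X,g,\bar g) + \oint_M \mathcal{C}_0\,dv_k.
\]
Applying Theorem \ref{globthm1} with $n=3$ then rewrites
\[
    V(X,g,\bar g) = -\oint_M\mathscr{S}\,dv_k - \tfrac13\oint_M\bigl(a_1'v^{(2)}+2a_2'v^{(1)}+3a_3'\bigr)dv_k,
\]
and substituting converts $6V$ into the desired $-6\oint_M\mathscr{S}\,dv_k$ while adding further local boundary integrands that will merge with $\mathcal{C}_0$ to define $\mathcal{C}$.

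The main obstacle is the explicit identification of $\mathcal{C}$. To carry this out I would first, following \cite{g17,gw15}, compute the low-order singular Yamabe coefficients $u_2,u_3,u_4$ and the obstruction $\mathcal{L}$ in (\ref{usyexp}) in terms of $H$, $\mathring{L}$, $\overline{R}$, $\overline{R}_{\mu\nu}$, and their normal derivatives. From the expansion of $u^{-4}\,dv_{\bar g}$ the renormalized volume coefficients $v^{(1)}$ and $v^{(2)}$ then follow immediately. The derivatives $a_j'=\left.\tfrac{d}{ds}\right|_{s=3}a_j(s)$ for $j=1,2,3$ can be obtained by differentiating in $s$ the indicial recursion that determines $F_s$ in the scattering construction of \cite{gz03}, adapted (as elsewhere in this paper) to the non-smooth singular Yamabe solution.

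The final step is algebraic: summing the resulting contribution with $\mathcal{C}_0$ and using the contracted Gauss and Codazzi equations to trade normal components of $\overline{R}_{\mu\nu}$ and $\overline{R}$ against the tangential $R_{\mu\nu}$, $R$ and expressions quadratic in $\mathring{L}$ and $H$, should yield the particular rational coefficients displayed in the statement. The delicate point is the bookkeeping, in particular controlling the cubic $\mathring{L}^3$ contribution and the normal-derivative term $\partial_r\overline{R}$, which enter only at the highest relevant order of the singular Yamabe expansion and are therefore most sensitive to small errors in the lower-order coefficients.
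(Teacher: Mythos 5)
Your proposal matches the paper's approach exactly: combine the four-dimensional singular Yamabe Gauss--Bonnet formula from \cite{gg19} (which expresses $8\pi^2\chi(X)$ via $|W|_g^2$, a finite-part $|E|_g^2$ integral, $6V(X,g,\bar{g})$, and local boundary terms) with the explicit $n=3$ instance of Theorem \ref{globthm1}, for which the paper computes $a_1'(3), a_2'(3), a_3'(3)$ and the renormalized volume coefficients $v^{(1)}, v^{(2)}$ in Section \ref{compsec}, arriving at an explicit formula for $V(X,g,\bar g)$ in terms of $\mathscr{S}$ plus local integrands; the final identification of $\mathcal{C}$ is then left, as in the paper, to an algebraic collection of terms. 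Your reasoning about the $\tfrac{1}{24}R_g^2 = 6$ contribution (using $R_g \equiv -12$) and the conformal invariance of $|W|_g^2\,dv_g$ in dimension four is correct and is precisely what makes the decomposition into $|W|^2$, $|E|^2$, and renormalized volume work.
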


The following corollary also follows from Corollary 1.4 of that paper, or from the previous result.
\begin{corollary}
	\label{gbcor}
	Let $(X^4,\bar{g})$ be a compact Riemannian manifold with \emph{umbilic} boundary $M^3 = \partial X$. Then the quantity
	\begin{equation*}
		\widetilde{V} = \oint_M\left( -\mathscr{S} + \frac{1}{6}\mathcal{C} \right)dv_k
	\end{equation*}
	is a conformal invariant.
\end{corollary}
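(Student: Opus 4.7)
The plan is to read the corollary directly off Theorem \ref{gbprop} by establishing that every term in the Gauss-Bonnet formula except the boundary integral is already a conformal invariant of $\bar{g}$. Rearranging the theorem and dividing by $6$,
\begin{equation*}
	\widetilde{V} = \tfrac{4\pi^2}{3}\chi(X) - \tfrac{1}{24}\int_X |W|_g^2\,dv_g + \tfrac{1}{12}\,\mathrm{f.p.}\!\int_{r>\varepsilon} |E|_g^2\,dv_g,
\end{equation*}
so the task reduces to showing each term on the right is unchanged when $\bar{g}$ is replaced by $e^{2\omega}\bar{g}$. The first is topological. The central observation is that the singular Yamabe metric $g$ is itself determined by the conformal class $[\bar{g}]$: by uniqueness of the constant-scalar-curvature representative with scalar curvature $-n(n+1)$ in $[\bar{g}]$, replacing $\bar{g}$ by $e^{2\omega}\bar{g}$ leaves $g$, and hence the densities $|W|_g^2\,dv_g$ and $|E|_g^2\,dv_g$, unchanged on $X \setminus M$. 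Only the defining function $u$, and thus the compactification and distance function $r$, are altered.

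This immediately disposes of the Weyl term: $\int_X |W|_g^2\,dv_g$ is finite (in dimension four $|W|^2 dv$ is a conformally invariant $4$-form, so it may be computed against the smooth compact metric $\bar{g}$) and, by the observation above, depends only on $[\bar{g}]$. Thus the only substantive step is the invariance of the renormalized Einstein integral $\mathrm{f.p.}\!\int_{r>\varepsilon}|E|_g^2\,dv_g$. Although its integrand is independent of the choice of conformal representative, the cutoff $\{r > \varepsilon\}$ is not, and in general the finite part will differ between $r$ and $\tilde{r}$ by the residue of the $\log(1/\varepsilon)$ coefficient in the expansion of $\int_{r>\varepsilon}|E|_g^2\,dv_g$ in powers of $\varepsilon$.

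The heart of the proof, then, is to show this logarithmic coefficient vanishes when $M$ is umbilic, so that the finite part is independent of the choice of $r$. I would argue this in two equivalent ways: (i) by direct invocation of Corollary 1.4 of \cite{gg19}, which is precisely the statement that the renormalized $L^2$ integral of the Einstein tensor of the singular Yamabe metric is a conformal invariant in the umbilic case; or (ii) by expanding $|E|_g^2\,dv_g$ near $M$ using (\ref{usyexp}) and noting that umbilicity forces the Willmore-type obstruction $\mathcal{L}$ (a conformally invariant quantity built out of $\mathring{L}$ and its derivatives) to vanish, which in turn kills the log term in the expansion of $|E|_g^2\,dv_g$. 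The main obstacle is this last asymptotic analysis; it is delicate because $|E|_g^2$ and $dv_g$ both blow up in a correlated way near $M$, and one must see that the umbilic hypothesis produces just enough cancellation at the logarithmic order. Citing \cite{gg19} bypasses the computation entirely, which is the cleanest route given the statement of the corollary.
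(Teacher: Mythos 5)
Your proposal is correct and follows essentially the same route the paper indicates: the paper remarks that the corollary "follows from Corollary 1.4 of [gg19], or from the previous result," and you have spelled out the latter route. Rearranging Theorem \ref{gbprop}, observing that $\chi(X)$ is topological, that $|W|_g^2\,dv_g$ is a conformally invariant density in dimension four (and that $g$ itself depends only on $[\bar{g}]$ by uniqueness of the singular Yamabe metric), and reducing to the conformal invariance of $\mathrm{f.p.}\int_{r>\varepsilon}|E|_g^2\,dv_g$ in the umbilic case -- which you correctly attribute to Corollary 1.4 of \cite{gg19} -- is exactly the intended argument. The one point you should not lean on is the sketched alternative (ii): it is not obviously true that umbilicity forces the obstruction $\mathcal{L}$ to vanish, nor that the vanishing of $\mathcal{L}$ is by itself what kills the logarithmic coefficient in the expansion of $\int_{r>\varepsilon}|E|_g^2\,dv_g$; the relevant cancellation in \cite{gg19} comes from the rate of decay of the Einstein tensor of the singular Yamabe metric under the umbilicity hypothesis, which is a separate asymptotic computation. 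Since you correctly defer to the citation as the clean route, this caveat does not affect the validity of the proof.
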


We point out also that having a uniquely defined scattering operator, as stated in Theorem \ref{PSYthm}, allows one to define unique \emph{fractional} extrinsic GJMS operators $P_{2\gamma}^{SY} = S(\frac{n}{2} + \gamma)$ 
of order $2\gamma$ as well, as in
\cite{cg11}. In the intrinsic case, such operators are unique only when a global Einstein metric can be found. As there, these operators are nonlocal.

The paper is organized as follows.
In section \ref{backsec} we review the background necessary for the paper, and also introduce geodesic coordinates for the singular Yamabe setting. These are a useful tool for studying
the singular Yamabe metric, and in particular performing computations. In section \ref{locsec}, we develop the existence theory for the class of singular Yamabe GJMS operators of integral order, which entails
formal analysis of the scattering operator at the boundary. The analysis is a variation of that carried out in section four of \cite{gz03}. The result is summarized in Theorem \ref{expandprop}. In section
\ref{globsec}, we turn to the global existence of the scattering operator and the results that follow from it, including Theorems \ref{PSYthm} - \ref{globthm2} and their corollaries. 
In section \ref{compsec}, we perform specific computations in low dimensions. 
This is done both to illuminate and illustrate the theory, and to demonstrate the usefulness of geodesic normal coordinates for carrying out computations with singular Yamabe metrics. We also there prove
Theorem \ref{gbprop}.

Appendix A contains more thorough discussion of some analytic ramifications of the limited regularity of the singular Yamabe metric.

\textbf{Acknowledgements} The authors found an error in their formula for $c_k$ in section \ref{locsec} of an earlier draft thanks to an explicit computation in \cite{jo21}, and have corrected it here.
Stephen McKeown carried out part of the work while a postdoctoral research associate at Princeton University, whom he thanks for the hospitality and support.
\renewcommand*{\thetheorem}{\thesection.\arabic{theorem}}

\section{Background}
\label{backsec}

\subsection{The Singular Yamabe Metric}

Let $\left( X^{n + 1},\bar{g} \right)$ be a Riemannian manifold with boundary $M^n = \partial X$. The singular Yamabe (or Loewner-Nirenberg) problem is to find a defining function $u$ for $M$ so that the conformally related
complete metric $g = u^{-2}\bar{g}$ has constant scalar curvature $R(g) = -n(n + 1)$. It has long been known that the solution $u$ exists and is unique (see \cite{ln74,am88,acf92}). 
Moreover, it was shown in \cite{acf92} that $u$ is regular
in the sense that, if $r(x)$ is the distance function to $M$ on $X$ with respect to $\bar{g}$, 
then $u$ has an asymptotic expansion in powers of $r$ and $r^k\log(r)^j$, where $k \geq n + 2$. As discussed in detail in \cite{g17}, this formal expansion
can be obtained term by term by writing out the equation $R(g) = -n(n + 1)$ in terms of $\bar{g}$. The equation becomes
\begin{equation}
	\label{usyeq}
	n(n + 1) = n(n + 1)|du|_{\bar{g}}^2 - 2nu\Delta_{\bar{g}}u - u^2\overline{R},
\end{equation}
where $\overline{R} = R(\bar{g})$ is the scalar curvature associated to the metric $\bar{g}$. 
Then, differentiating equation (\ref{usyeq}) term by term, one can write the expansion (\ref{usyexp}),
where $u_2,\cdots,u_{n + 1}$ and $\mathcal{L}$ are locally determined smooth functions on $M$, while $u_{n + 2}$ is globally determined. Each of the locally determined quantities is a universal
expression in the intrinsic and extrinsic geometry of $M$ as a hypersurface of $(X,\bar{g})$. In particular, $\mathcal{L}$ is an extrinsic conformal invariant of weight $-(n + 1)$. The function $u$ itself is also
conformally invariant of weight $1$: if $\breve{\bar{g}} = e^{2\omega}\bar{g}$, then $\breve{u} = e^{\omega}u$. This follows easily from the uniqueness of the singular Yamabe metric.

In \cite{g17} (see also \cite{gw17}), 
it was observed that one can define a renormalized volume for singular Yamabe metrics, and that the volume expansion defines a geometrically interesting energy term $\mathcal{E}$ that in some respects
generalizes the Willmore energy. Specifically, with the above notation, consider the quantity $\vol_g(\left\{ r > \varepsilon \right\})$. This can be expanded in powers of $\varepsilon$ as follows:
\begin{equation}
	\label{renormvol}
	\vol_{g}\left( \left\{ r > \varepsilon \right\} \right) = c_0\varepsilon^{-n} + c_1\varepsilon^{-n + 1} + \cdots + c_{n - 1}\varepsilon^{-1} + \mathcal{E}\log\frac{1}{\varepsilon} + V + o(1).
\end{equation}
Here each $c_k$ is determined locally in the sense that it is an integral over $M$ of locally, extrinsically-determined quantities. The energy $\mathcal{E}$ is a global (extrinsic) conformal invariant of the
embedding of $M$ in $X$, and is also the integral over $M$ of a local term; 
and $V$ is globally determined in the sense that it may depend on the geometry of $(X,\bar{g})$ far away from $M$. When $n = 2$, it was shown in the same paper that $\mathcal{E}$ is a linear combination
of the Willmore energy of $M$ and its Euler characteristic.

The singular Yamabe metric is, among other things, an asymptotically hyperbolic (AH) metric, but there are subtleties that make the application of AH theory to this situation slightly subtle. It will be useful to
discuss those subtleties here. We recall the following definition.
\begin{definition}
	An asymptotically hyperbolic space is a compact manifold $X^{n + 1}$ with boundary $M^n$, equipped on the interior $\mathring{X}$ with a metric $g$ such that, for any defining function $\varphi$ for $M$ in $X$,
	$\varphi^2g$ extends to a metric $\bar{g}$ on $X$ and $|d\varphi|_{\bar{g}}^2|_{TM} \equiv 1$. If $\varphi$ is smooth, then $g$ is called $C^k$ (or smooth) AH if $\bar{g}$ is a $C^k$ (or smooth) compact metric.
	The conformal infinity is the conformal class $[\varphi^2g|_{TM}]$ on $M$.
\end{definition}
In the most typical AH settings, one is given or constructs an AH metric, and it is this that is considered natural; various compactifications correspond to various defining functions $\varphi$, but there is no canonical defining
function and thus no canonical compactification. On the other hand, in the singular Yamabe problem, the problem data is precisely the compactification $\bar{g}$, which is taken to be smooth. The AH metric, which is the singular
Yamabe metric, is canonically obtained from $\bar{g}$, but is not generically a smooth AH metric since $u$ is not generically a smooth function; in particular, $u$ is typically only $C^{n + 1}$ and
polyhomogeneous, and it is easy to show that $g$ is only a $C^{n}$ AH metric.

A very useful result in AH geometry is the normal-form theorem, first proved in \cite{gl91}. We will require it here with more attention than usual to the regularity of the metric, so we here give a statement
suited to our needs.

\begin{lemma}
	\label{geodlem}
	Let $\bar{g}$ be a smooth metric on the manifold $X^{n + 1}$ with boundary $M$, and $k = \bar{g}|_{TM}$. 
	Let $g = u^{-2}\bar{g}$ be the corresponding singular Yamabe metric. Then for $\varepsilon > 0$ sufficiently small there is a unique $C^n$ diffeomorphism
	$\psi:[0,\varepsilon)_{\hat{r}} \times M \hookrightarrow X$ onto a collar neighborhood of $M$ such that $\psi^*g = \frac{d\hat{r}^2 + h_{\hat{r}}}{\hat{r}^2}$ with $h_0 = k$.
	
	Moreover, $(\psi^{-1})^*\hat{r} \in C^{\infty}(\mathring{X}) \cap C^{n + 1}(X)$; we will hereafter denote this function simply $\hr$. It further satisfies $r\hr \in C^{n + 2}$ and $\lim_{r \to 0}\partial_r^{n + 1}\hr = 0$.
\end{lemma}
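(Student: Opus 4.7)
The plan is to follow the standard Graham-Lee normal-form construction \cite{gl91} for asymptotically hyperbolic metrics, while tracking regularity carefully in light of the limited smoothness of the singular Yamabe function $u$.

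First I would seek $\hat{r}$ of the form $\hat{r} = e^\sigma u$ for an unknown function $\sigma$ on $X$. Since $\hat{r}^2 g = e^{2\sigma}\bar{g}$, the defining condition $|d\hat{r}|^2_{\hat{r}^2 g} = 1$ of the geodesic compactification becomes the first-order PDE
\begin{equation*}
	|u\,d\sigma + du|^2_{\bar{g}} = 1,
\end{equation*}
subject to the initial condition $\sigma|_M = 0$, chosen to ensure $\hat{r}^2 g|_{TM} = k$ (equivalently $h_0 = k$). Because $|du|^2_{\bar{g}}|_M = 1$, the equation is noncharacteristic at $M$ and admits a unique solution $\sigma$ in a collar neighborhood by the method of characteristics. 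The diffeomorphism $\psi$ is then obtained as the flow of the (unit) $\hat{r}^2 g$-gradient of $\hat{r}$ starting from $M$, and the condition $|d\hat{r}|^2_{\hat r^2 g} = 1$ guarantees the normal form $\psi^* g = \hat{r}^{-2}(d\hat{r}^2 + h_{\hat r})$.

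The regularity statements then follow from the known polyhomogeneous structure of $u$. Since $u \in C^{n+1}(X)$ with log terms beginning only at order $n+2$, the coefficients of the characteristic ODE for $\sigma$ are $C^n$, yielding $\sigma \in C^{n+1}$ and hence $\hat{r} \in C^\infty(\mathring{X}) \cap C^{n+1}(X)$. The diffeomorphism $\psi$ is the flow of a $C^n$ vector field, hence $C^n$. The assertion $r\hat{r} \in C^{n+2}$ follows because the lowest-order log term in the expansion of $\hat{r}$ occurs at order $n+2$; multiplying by $r$ shifts this term to order $n+3$, where $r^{n+3}\log r \in C^{n+2}$.

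The main obstacle is the final claim $\lim_{r\to 0}\partial_r^{n+1}\hat{r} = 0$, equivalently that the coefficient of $r^{n+1}$ in the formal expansion of $\hat{r}$ vanishes. I would prove this by expanding $\sigma$ order by order in $r$ using the eikonal equation, together with the expansion (\ref{usyexp}) of $u$, and then reading off the $r^{n+1}$ coefficient of $e^\sigma u$. The required cancellation should follow from matching the recursion for $\sigma_j$ imposed by the eikonal equation against the recursion for $u_j$ imposed by the singular Yamabe equation (\ref{usyeq}); conceptually, it reflects the uniqueness of the normal form, since a nonzero $r^{n+1}$ coefficient in $\hat{r}$ would produce a term in $h_{\hat{r}}$ incompatible with the scalar curvature condition $R(g) = -n(n+1)$ at the corresponding order.
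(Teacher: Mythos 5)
Your overall strategy---the conformal-factor ansatz $\hat{r}=e^{\sigma}u$, the eikonal equation with $\sigma|_M=0$, the flow of the gradient of $\hat{r}$---is the same as the paper's, but the regularity analysis has a genuine gap. You assert that since the coefficients of the characteristic ODE are $C^n$, one gets $\sigma\in C^{n+1}$. That inference is wrong on two counts. First, the classical method of characteristics with $C^n$ data yields a solution that is jointly only $C^n$ (in the flow time together with the transverse parameters); integration does not buy an extra degree here. Second, and more to the point, $\sigma$ generically acquires a term $\sim r^{n+1}\log r$, driven by the $\mathcal{L}\,r^{n+2}\log r$ term in $u$, and so is at best $C^n$. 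The conclusion $\hat{r}\in C^{n+1}(X)$ is true, but for a different reason which your proof never isolates: writing $\hat{r}=r\,\tilde{u}\,e^{\sigma}$, the explicit factor of $r$ shifts the leading log singularity of $\tilde{u}e^{\sigma}$ from order $n+1$ to order $n+2$, gaining precisely one degree of regularity. This is why the paper bothers to record the lemma $\omega\in\mathcal{D}_p\Rightarrow r\omega\in\mathcal{D}_{p+1}$.

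There is also a gap in rigor even for the $C^n$ statement. The paper explicitly warns that solving the eikonal equation ``in this form would not give optimal regularity.'' To control the remainder after the formal expansion, it first peels off a polyhomogeneous approximate solution $\omega^0$ through order $n+1$, and then proves a conormal-regularity result for noncharacteristic first-order PDEs (Proposition \ref{noncharprop}) to conclude the remainder $\Omega$ lies in $\mathcal{D}_n$. Your appeal to ``the method of characteristics'' never engages with this: the issue is not that a solution exists, but that it has the conormal structure needed to make sense of the polyhomogeneous expansion and to deduce $r\hat{r}\in C^{n+2}$. Similarly, your argument for $\lim_{r\to0}\partial_r^{n+1}\hat{r}=0$ is only a plan, not a proof; once the polyhomogeneous expansion of $\hat{r}$ is actually established, that claim is a statement about a specific Taylor coefficient and should be read off from the recursion rather than inferred conceptually from ``uniqueness of the normal form.''
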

The proof is in appendix \ref{proofapp}.

If $\hat{r}$ is extended to $X$ as a positive smooth function, we will call the metric $\hat{\bar{g}} = \hat{r}^2g$ a \emph{geodesic representative associated to $\bar{g}$}. The importance of these metrics for us is that
the singular Yamabe function associated to one of them is simply $\hat{r}$ itself -- that is, for such a metric, the intrinsic distance to the boundary is the solution to the singular Yamabe equation (\ref{usyeq}). This will
greatly simplify some of our computations in section \ref{compsec}.

\subsection{Scattering on Asymptotically Hyperbolic Spaces}

The main results we need come from the paper \cite{gz03} by Graham and Zworski, which analyzed scattering on asymptotically hyperbolic manifolds using tools from \cite{mm87}. Let $(X^{n + 1},g)$ be an AH manifold, with $M = \partial X$
and conformal infinity $[h]$.
Let $x$ be a defining function for the boundary and $\bar{g} = x^2g$ (we do not here assume that $x$ is a geodesic defining function).
Consider the operator $(\Delta_g + s(n - s))u = 0$, where $u \in C^{\infty}(\mathring{X})$. It is easy to show, by writing the operator $\Delta_g$ in local coordinates, that
any solution to the equation must have leading order $x^{n - s}$, assuming $\re s \geq \frac{n}{2}$ and $s \neq \frac{n}{2}$. 
Thus, the problem considered in \cite{gz03} is the following, for $\re s \geq \frac{n}{2}$ with $s \neq \frac{n}{2}$.
Let $f \in C^{\infty}(M)$ be prescribed. Then consider the problem
\begin{equation}
	\label{probstat}
	\begin{split}
	     (\Delta_g + s(n - s))u &= 0\\
	     u &= x^{n - s}F + o(x^{n - s})\text{ if } \re s \neq \frac{n}{2}\\
	     u &= x^{n - s}F + x^sG + O(x^{n/2 + 1})\text{ if } \re s = \frac{n}{2}, s \neq \frac{n}{2}\\
	     F, G \in C^{\infty}(X)\text{ with } F|_M = f.
        \end{split}
\end{equation}

To describe the results of the paper, it will be useful to use sections of the normal density bundles $C^{\infty}(M,|N^*M|^s)$ over the boundary, which helpfully parametrize first-order changes in the defining function.
Given a choice $k \in [k]$ of conformal representative of the conformal infinity, and letting $x$ be any defining function such that $x^2g|_{TM} = k$, we can trivialize $|N^*M|^s$ by the global section
$|dx|^s$, and in particular can identify $C^{\infty}(M,|N^*M|^s)$ with $C^{\infty}(M)$. We will also use the notation $\mathcal{E}(-s)$ for the bundle $C^{\infty}(M,|N^*M|^s)$.

Let $\sigma(\Delta_g)$ be the spectrum of the Laplacian of $g$. Graham and Zworski proved the following theorem.
\begin{theorem}
	There is a unique family of Poisson operators
	\begin{equation*}
		\mathcal{P}(s):C^{\infty}(M,|N^*M|^{n - s}) \to C^{\infty}(\mathring{X})
	\end{equation*}
	for $\re s \geq n/2, s \neq n/2$, which is meromorphic in $\left\{ \re s > \frac{n}{2} \right\}$ with poles only for such $s$ that $s(n - s) \in \sigma(\Delta_g)$, and continuous up to
	$\left\{ \re s = n/2 \right\}\setminus \left\{ n/2 \right\}$, such that
	\begin{equation*}
		(\Delta_g + s(n - s))\mathcal{P}(s) = 0
	\end{equation*}
	with expansions
	\begin{align*}
		\mathcal{P}(s)f &= x^{n - s}F + x^sG \text{ if } s \notin n/2 + \mathbb{N}_0/2\\
		\mathcal{P}(s)f &= x^{n/2 - k/2}F + Gx^{n/2 + k/2}\log x \text{ if } s = n/2 + k/2, k \in \mathbb{N},
	\end{align*}
	for $F, G \in C^{\infty}(X)$ such that $F|_{\partial X} = f$.
	If $s = \frac{n}{2} + j$, then $G|_M = -2p_{2k}f$, where $p_{2k}$ is a differential operator on $M$ of order $M$ having principal part $\sigma_{2j}(p_{2j}) = c_j\sigma_{2j}(\Delta_h^j)$, where
	$c_j = (-1)^j[2^{2j}j!(j - 1)!]^{-1}$.
\end{theorem}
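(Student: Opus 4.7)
The proof strategy follows the scheme of Mazzeo--Melrose together with a careful asymptotic analysis, as in \cite{mm87,gz03}. Near the boundary, working with a geodesic defining function $x$ so that $g = x^{-2}(dx^2 + h_x)$, the Laplacian expands as
\begin{equation*}
	-\Delta_g = (x\partial_x)^2 - nx\partial_x + \tfrac{1}{2}(x\partial_x \log\det h_x)(x\partial_x) - x^2\Delta_{h_x}.
\end{equation*}
Substituting an ansatz $u = x^{n-s}F$ with $F = \sum_{j \geq 0} F_j x^j$, $F_0 = f$, into $(\Delta_g + s(n-s))u = 0$ yields a recursion in which the coefficient of $F_j$ is the indicial value $j(2s - n - j)$. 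So long as $2s - n \notin \mathbb{N}$, this recursion can be solved uniquely in $C^\infty(M)$ for every $F_j$, producing a formal power series $u^{(1)}$ with $(\Delta_g + s(n-s))u^{(1)} = O(x^\infty)$; Borel summation promotes this to an honest smooth function modulo rapidly decaying error.

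The first main step is then to correct $u^{(1)}$ to an exact solution using the meromorphic continuation of the $L^2$-resolvent $R(s) = (\Delta_g + s(n-s))^{-1}$ constructed in \cite{mm87}. One sets $\mathcal{P}(s)f = u^{(1)} - R(s)(\Delta_g + s(n-s))u^{(1)}$; since the error term lies in a weighted space on which $R(s)$ is meromorphic in $\{\re s > n/2\}$ with poles only at $\sigma_{pp}(\Delta_g)$, the resulting $\mathcal{P}(s)f$ inherits the same meromorphic structure and is continuous up to $\{\re s = n/2\} \setminus \{n/2\}$. The second solution $x^sG$ appears because the resolvent correction contributes precisely that mode. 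Uniqueness follows because any other solution with the same asymptotics would differ by an $L^2$ element in the kernel of $\Delta_g + s(n-s)$, which we have excluded.

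The main obstacle is the behavior at the exceptional points $s = n/2 + k/2$, $k \in \mathbb{N}$. Here the indicial factor $j(2s - n - j)$ vanishes at $j = k$, so the recursion for $F_k$ is obstructed. One must instead admit a logarithmic term and seek an expansion of the form $u = x^{n-s}F + G\, x^{n/2 + k/2}\log x$. Substitution then shows that $G|_M$ is determined by the failure of the recursion at order $k$: the indicial operator applied to a log term produces an extra factor equal to $-2k$, so that $G|_M$ equals a universal constant times the accumulated action of the tangential part of $\Delta_g$ through $k$ iterations of the indicial recursion. Tracking the leading-order contributions, which come from the $-x^2\Delta_{h_x}$ term, gives the principal symbol $c_j\sigma_{2j}(\Delta_h^j)$ with $c_j = (-1)^j[2^{2j}j!(j-1)!]^{-1}$; the lower-order tangential pieces of $\Delta_g$ contribute to the lower-order terms of $p_{2k}$ but do not affect the principal symbol.

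Finally, the covariant formulation in terms of densities in $\mathcal{E}(-s)$ is obtained by noting that a change $x \mapsto e^{\omega}x$ of boundary defining function rescales the would-be boundary value $F|_M$ by $e^{(n-s)\omega}$, so that the operator naturally takes sections of $|N^*M|^{n-s}$ as input and produces sections of $|N^*M|^s$. The entire construction depends holomorphically on $s$ away from the exceptional set because the indicial coefficients do so, and the resolvent correction inherits its meromorphic structure from $R(s)$. The most delicate point throughout is verifying that the log coefficient at $s = n/2 + k/2$ is genuinely a \emph{differential} operator with the claimed principal part, which reduces to identifying the leading symbol in the iterated recursion.
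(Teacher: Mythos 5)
This theorem is not proved in the paper: it appears in the background section as a quoted result of Graham--Zworski \cite{gz03}, with the paper's ``proof'' being simply the citation. Your sketch is a faithful summary of the Graham--Zworski scheme (formal indicial recursion in powers of $x$, Borel summation, correction to an exact solution via the Mazzeo--Melrose meromorphic continuation of the resolvent, log-corrected expansion at the exceptional points $s = \tfrac{n}{2} + \tfrac{k}{2}$, and identification of the principal symbol of the log coefficient), and so it matches the approach the paper defers to. Two things are worth tightening. First, your sign conventions are internally inconsistent: the displayed formula for $-\Delta_g$ and the indicial coefficient $j(2s-n-j)$ presuppose the convention in which $\Delta_g$ and $\Delta_{h_x}$ are the \emph{positive} (geometer's) Laplacians, under which the equation should read $(\Delta_g - s(n-s))u = 0$, not $(\Delta_g + s(n-s))u = 0$ as you wrote (the paper itself uses the negative-Laplacian convention, for which the indicial coefficient is $j(j + n - 2s)$). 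Second, the value $c_j = (-1)^j[2^{2j}j!(j-1)!]^{-1}$ is asserted rather than derived; the argument needs to exhibit the closed form $c_{p,s} = (-1)^p\,\Gamma(s-\tfrac{n}{2}-p)\,/\,\bigl(2^{2p}p!\,\Gamma(s-\tfrac{n}{2})\bigr)$ emerging from the recursion and then extract the coefficient at $s = \tfrac{n}{2}+p$. You also pass quickly over why $\mathcal{P}(s)$ has no spurious poles at the exceptional $s$ and why it is continuous up to $\{\re s = n/2\}\setminus\{n/2\}$, both of which require separate arguments in \cite{gz03}; but these are natural omissions in a sketch and do not represent a wrong turn.
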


With this result in hand, we can define the scattering matrix as an operator $S(s):C^{\infty}(M,|N^*M|^{n - s}) \to C^{\infty}(M,|N^*M|^s)$ for
$\re s \geq n/2, 2s - n \notin \mathbb{N}_0$, and $s(n - s) \notin \sigma(\Delta_g)$. For such $s$, and any $f \in C^{\infty}(M,|N^*M|^{n - s})$, we have by the above
\begin{equation*}
	\mathcal{P}(s)f = x^{n - s}F + x^sG,
\end{equation*}
with $F|_M = f$. The scattering matrix is defined by $S(s)f = G|_M$. It is shown in \cite{gz03} that $S(s)$ extends meromorphically to the entire plane.

The log terms in the theorem arise (when they do arise) for the usual reason seen when the indicial roots of a regular singular ODE are separated by an integer. As the statement makes clear, the log coefficient
may vanish for $\frac{n + q}{2}$ with $q$ odd, but is always nonvanishing for even $q$.

This paper applies the results of \cite{gz03} to the singular Yamabe metric.

\subsection{Notation}
Throughout, $X^{n + 1}$ is a compact manifold with boundary $M^n$ and smooth metric $\bar{g}$. The singular Yamabe metric $g = u^{-2}\bar{g}$ is as above. The distance function to $M$ on $X$ with respect
to $\bar{g}$ is $r$, while $\hat{r}$ is as in Lemma \ref{geodlem}. The induced metric on $M$ is $k = \bar{g}|_{TM}$. When using coordinates, we use the convention that
$r = x^0$, while $x^1,\cdots,x^n$ restrict to coordinates locally on $M$. In index notation, we take $0 \leq i,j \leq n$ and $1 \leq \mu, \nu \leq n$. The second fundamental form
of $M$ with respect to the inward-pointing $\bar{g}$-unit normal $\frac{\partial}{\partial r}$ is denoted by $L$, and the trace-free part by $\mathring{L}$.
The mean curvature of $M$ is $H = k^{\mu\nu}L_{\mu\nu}$. Our curvature sign convention is such that $R_{ij} = R^{k}{}_{ijk}$, and the Laplace operator is a negative operator, i.e.,
the divergence of the gradient.

\section{Local Analysis}
\label{locsec}

In this section we analyze formal solutions to the equation $(\Delta_g + s(n - s))u = 0$ for a singular Yamabe metric $g$. Let $\bar{g}$ be a smooth metric on $X^{n + 1}$, and $u$ the solution to the singular Yamabe equation
(\ref{usyeq}), so that $g = u^{-2}\bar{g}$ has constant scalar curvature $-n(n + 1)$. We again let $M = \partial X$. Near $M$, we write $\bar{g} = dr^2 + h_r$, where $h_r$ is a one-parameter family of metrics on $M$ and $r$
is the $\bar{g}$-distance to $M$. We write $k = h_0 = \bar{g}|_{TM}$.

The following result, which is the primary result of this section, is directly analogous to Proposition 4.2 of \cite{gz03} for the Einstein case, and the proof is modified accordingly. One difference is that
a log term arises here for every integer, whereas in the Einstein cases it arises only for the even integers. The other significant difference is that the metric itself is non-smooth, and via $u$ has logarithmic terms that
must be considered.

\begin{theorem}
	\label{expandprop}
	Let $g$ be the singular Yamabe metric associated to $(X,\bar{g})$, and $f \in C^{\infty}(M)$. For every $q \in \mathbb{N}$ with $1 \leq q \leq n$, and $s = \frac{n + q}{2}$, there is a formal solution $v$ to
	the equation
	\begin{equation*}
		(\Delta_g + s(n - s))v = O(r^{\infty})
	\end{equation*}
	of the form
	\begin{equation*}
		v = r^{\frac{n - q}{2}}(F + Gr^q\log r),
	\end{equation*}
	where $F \in C^{\infty}(X)$, $G \in C^{n - q,1-\varepsilon}(X)$ is polyhomogeneous, and $F|_M = f$. Here $F$ is uniquely determined mod $O(r^q)$ and $G$ is uniquely determined mod $O(r^{\infty})$. In addition,
	\begin{equation}
		\label{gek}
		G|_M = -2c_qP_q^{SY}f,
	\end{equation}
	where $c_q \neq 0$ is a universal constant and $P_q^{SY}$ is a differential operator on $M$ which, if $q$ is even, has principal part $(-\Delta_k)^{\frac{q}{2}}$, and if $q > 1$ is odd,
	has the same principal part as $\mathring{L}^{\mu\nu}\nabla_{\mu}\nabla_{\nu}(\Delta_k)^{\frac{q - 3}{2}}$, where $\mathring{L}$ is the tracefree second fundamental form. If $q = 1$, then
	$G = 0$.

	Finally, $P_q^{SY}$ depends only on the jet of $\bar{g}$ at $M$, and defines a conformally invariant operator $\mathcal{E}(\frac{q - n}{2}) \to \mathcal{E}\left( \frac{-q - n}{2} \right)$.
\end{theorem}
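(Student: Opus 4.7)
The plan is to imitate the proof of Proposition~4.2 of \cite{gz03}, adapted to the singular Yamabe setting, by working in the geodesic coordinates of Lemma~\ref{geodlem}. In these coordinates $g = \hat r^{-2}(d\hat r^2 + h_{\hat r})$, and a direct computation yields
\begin{equation*}
\Delta_g = \hat r^2 \partial_{\hat r}^2 + \left[(1-n) + \tfrac{\hat r}{2}\operatorname{tr}_{h_{\hat r}}\partial_{\hat r} h_{\hat r}\right]\hat r \partial_{\hat r} + \hat r^2 \Delta_{h_{\hat r}}.
\end{equation*}
Setting $b = (n-q)/2$, the identity $b(b-n) + s(n-s) = 0$ (with $s = (n+q)/2$) reduces the substitution $v = \hat r^b(F + G\hat r^q\log\hat r)$ into $(\Delta_g + s(n-s))v = 0$ to matching Taylor coefficients in $\hat r$ of a first-order operator $\mathcal D$ in $\hat r\partial_{\hat r}$ whose indicial roots at $\hat r = 0$ are $0$ and $q$. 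Since only orders $k \le q \le n$ enter the formal analysis, the fact that $h_{\hat r}$ is merely polyhomogeneous starting at $\hat r^{n+2}\log \hat r$ is harmless for the recursion itself, though it dictates the regularity of $G$ later.

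Writing $F = \sum_{k\geq 0}F_k \hat r^k$, the coefficient of $\hat r^{k-1}$ in $\mathcal D F$ takes the form
\begin{equation*}
k(k-q)F_k + \Delta_k F_{k-2} + L_{k-1}(F_0,\ldots,F_{k-1}),
\end{equation*}
where $L_{k-1}$ depends only on the Taylor coefficients of $h_{\hat r}$ at $\hat r = 0$. For $1 \le k < q$ this uniquely determines $F_k$ from $F_0 = f$. At $k = q$ the indicial factor vanishes, leaving the obstruction $\mathcal R_{q-1} = \Delta_k F_{q-2} + L_{q-1}(F_0,\ldots,F_{q-1})$. A short computation shows $\mathcal D(\hat r^q \log\hat r \cdot G_0)$ at order $\hat r^{q-1}$ equals $qG_0$, with its log part cancelling by the same indicial identity, so the equation at order $\hat r^{b+q}$ forces $qG|_M = -\mathcal R_{q-1}$. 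Defining $P_q^{SY} f$ as $\mathcal R_{q-1}$ times a universal constant $-1/(2qc_q)$ yields $G|_M = -2c_q P_q^{SY}f$, and the higher coefficients of $F$ and $G$ are determined inductively with no further obstruction.

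The principal-part identification splits into cases. For $q$ even, each step of the recursion propagates a factor of $\Delta_k$, and $\mathcal R_{q-1}$ carries $(-\Delta_k)^{q/2}f$ at the top, up to a universal constant. For $q > 1$ odd, the chain $F_1 \to F_3 \to \cdots \to F_{q-2}$ involves the first odd Taylor coefficient of $h_{\hat r}$, which is proportional to the second fundamental form $\hat L$ of $M$ in the geodesic representative $\hat{\bar g} = \hat r^2 g$. The singular Yamabe condition forces $M$ to be minimal in $\hat{\bar g}$, so $\operatorname{tr}_k\hat L = 0$ and only the tracefree part $\hat{\mathring L}$, which agrees with $\mathring L$ up to a harmless conformal factor, contributes. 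Propagating this through the recursion gives a top term of the form $\mathring L^{\mu\nu}\nabla_\mu \nabla_\nu (\Delta_k)^{(q-3)/2}$ up to a universal constant, after one verifies via the Einstein specialization (where odd Taylor coefficients of $h_{\hat r}$ vanish) that the a priori term of order $(-\Delta_k)^{(q-1)/2}$ cancels. The case $q = 1$ is degenerate in the same way: the only contribution to $\mathcal R_0$ is a pointwise multiple of $\operatorname{tr}_k\hat L$, which vanishes, forcing $G|_M = 0$; an induction through the recursion propagates $G \equiv 0$.

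Conformal covariance of $P_q^{SY}$ follows from $\tilde u = e^\omega u$ under $\tilde{\bar g} = e^{2\omega}\bar g$ together with the scaling of the prescribed boundary datum as a section of $\mathcal E((q-n)/2)$; matching the log coefficients of $v$ and $\tilde v$ yields the stated transformation law. The claim that $P_q^{SY}$ depends only on the jet of $\bar g$ at $M$ is immediate from the fact that the recursion uses only Taylor coefficients of $h_{\hat r}$ at $\hat r = 0$, and the polyhomogeneous regularity $G \in C^{n-q,1-\varepsilon}$ is inherited from the $\hat r^{n+2}\log\hat r$ term in the expansion of $h_{\hat r}$. I expect the hardest part to be the principal-part computation for odd $q > 1$: one has to extract the surviving extrinsic contribution from a recursion in which many orders of $h_{\hat r}$ compete at the same power of $\hat r$, and show that the leading intrinsic terms cancel exactly so that what remains factors in the stated $\mathring L^{\mu\nu}\nabla_\mu\nabla_\nu$ form.
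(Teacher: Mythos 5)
Your proposal takes a genuinely different route from the paper's. The paper deliberately avoids normal form for this proof (remarking after equation \eqref{dindeq} that it has ``avoided expressing the metric in normal form here, since $\bar g$ is part of the data of the problem''); it writes $u = r\tilde u$, derives the operator $\mathcal D_s$ in terms of $\bar g$ and $\tilde u$, and then tracks carefully which coefficients in $f_j$ have a pole at $s = \tfrac{n+q}{2}$ and which are holomorphic there. The crux for odd $q = 2p+1$ is that the coefficient $d_{2p+1,s}$ of the intrinsic term $H\Delta_k^p f$ in $f_{2p+1}$ is shown to be smooth across $s = \tfrac{n+q}{2}$, so it drops out of the residue, leaving only the $\mathring L^{\mu\nu}\nabla_\mu\nabla_\nu\Delta_k^{p-1}$ contribution. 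Your approach instead passes to the geodesic compactification $\hat{\bar g}$ of Lemma \ref{geodlem}, where $\tilde u \equiv 1$ and, crucially, the singular Yamabe condition forces $\hat H = 0$, so that the $H$-terms never appear; you then recover the statement for a general $\bar g$ via conformal covariance of $P_q^{SY}$, using that the conformal factor restricts to $0$ on $M$. This is an attractive alternative: it replaces the paper's residue bookkeeping with a gauge choice, at the cost of working with a compactification of only finite regularity and of having to transport the result back through a conformal change (which you must establish, as you note, before using it).

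Two points need repair. First, in the geodesic gauge there is no intrinsic $(-\Delta_k)^{(q-1)/2}$ term awaiting ``cancellation'' for odd $q$, so the Einstein-specialization detour is both unnecessary and slightly misleading. In that gauge the only order-$(q-1)$ contributions to $\mathcal R_{q-1}$ are $[\Delta_{h_{\hr}}]_1 f_{q-3}$ and $\Delta_k f_{q-2}$, and both have principal symbol proportional to $\mathring L^{\mu\nu}\xi_\mu\xi_\nu|\xi|^{q-3}$, because $\hat H = 0$ kills the $\tfrac{2}{n}\hat H\Delta_k$ piece of $\partial_{\hr}\Delta_{h_{\hr}}|_{\hr=0}$ (cf.\ equation \eqref{lapderiv}). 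Second, and more substantively, you have not verified the claim $c_q \neq 0$ for odd $q>1$. The two order-$(q-1)$ contributions just described carry the same principal symbol, and a priori they could cancel. The paper rules this out by the explicit recursion
\begin{equation*}
	c_{2p+1,s} = \frac{1}{2(2p+1)\bigl(s - \tfrac{n}{2} - p - \tfrac12\bigr)}\bigl(2(-1)^{p-1}c_{2p-2,s} + c_{2p-1,s}\bigr),
\end{equation*}
together with an induction showing $c_{j,s} > 0$ for odd $j$ when $s > \tfrac{n+j}{2}$. Your argument needs an equivalent check; without it, one has not shown that $P_q^{SY}$ actually has the stated (nonvanishing) principal part, which is what makes the statement about $c_q$ meaningful.
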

	\begin{proof}[Proof of Theorem \ref{expandprop}]
	We wish to formally solve the equation $(\Delta_g + s(n - s))v = 0$. Now $u = r + O(r^2)$, so we may write $u = r\tilde{u}$ for some $\tilde{u} \in C^n(X)$ satisfying $\tilde{u}|_M = 1$. Thus,
	\begin{align*}
		\Delta_gv &= r^{1 + n}\tu^{1 + n}(\det h)^{-1/2}\partial_i\left[ r^{1 - n}\tu^{1 - n}(\det h)^{1/2}\bar{g}^{ij}\partial_jv \right]\\
		&=r^2\tilde{u}^2\partial_{r}^2v + (1 - n)r\tu^2\partial_rv + (1 - n)r^2\tu\partial_r(\tu)\partial_rv + \frac{1}{2}r^2\tu^2h^{\mu\nu}h'_{\mu\nu}\partial_rv\\
		&\quad+ (1 - n)r^2\tu h^{\mu\nu}\partial_{\mu}(\tu)\partial_{\nu}(v)+ r^2\tu^2\Delta_{h_r}v.
	\end{align*}
	(Here, a prime denotes $\partial_r$.) Taking $v = r^{n - s}\psi$, we find
	\begin{align*}
		[\Delta_g + s(n - s)]v &= r^{n - s + 2}\tu^2\partial_r^2\psi + 2(n - s)r^{n - s + 1}\tu^2\partial_r\psi\\
		&\quad + (n - s)(n - s - 1)r^{n - s}\tu^2\psi + (1 - n)(n - s)r^{n - s}\tu^2\psi\\
		&\quad + (1 - n)r^{n - s + 2}\tu^2\partial_r\psi + (1 - n)r^{n - s + 2}\tu\partial_r(\tu)\partial_r\psi\\
		&\quad + (1 - n)(n - s)r^{n - s + 1}\tu\partial_r(\tu)\psi + \frac{1}{2}r^{n - s + 2}\tu^2h^{\mu\nu}h'_{\mu\nu}\partial_r\psi\\
		&\quad + \frac{1}{2}(n - s)r^{n - s + 1}\tu^2h^{\mu\nu}h'_{\mu\nu}\psi + (1 - n)r^{n - s + 2}\tu h^{\mu\nu}\partial_{\mu}(\tu)\partial_{\nu}(\psi)\\
		&\quad + r^{n - s + 2}\tu^2\Delta_{h_r}\psi + s(n - s)r^{n - s}\psi\\
		&=r^{n - s + 1}\left[ r\tu^2\partial_r^2\psi + \left( (n + 1 - 2s)\tu^2 + (1 - n)r\tu\partial_r\tu\right.\right.\\
		&\quad+ \frac{1}{2}r\tu^2h^{\mu\nu}h'_{\mu\nu} )\partial_r\psi + \left( s(s - n)\frac{\tu^2-1}{r} + (1 - n)(n - s)\tu\partial_r\tu\right.\\
		&\quad+ \frac{1}{2}(n - s)\tu^2h^{\mu\nu}h'_{\mu\nu})\psi + (1 - n)r\tu h^{\mu\nu}\partial_{\mu}\tu\partial_{\nu}\psi\\
	        &\quad\left.+r\tu^2\Delta_{h_r}\psi\right].
	\end{align*}
	We may conclude that
	\begin{equation*}
		[\Delta_g + s(n - s)]\circ r^{n - s} = r^{n - s + 1}\circ \mathcal{D}_s,
	\end{equation*}
	where
	\begin{equation}
		\label{dseq}
		\begin{split}
		\mathcal{D}_s &= r\tu^2\partial_r^2 + \left( (n + 1 - 2s)\tu^2 + (1 - n)r\tu\partial_r\tu + \frac{1}{2}r\tu^2h^{\mu\nu}h'_{\mu\nu}\right)\partial_r\\
		&\quad + s(s - n)\frac{\tu^2 - 1}{r} + (1 - n)(n - s)\tu\partial_r\tu + \frac{1}{2}(n - s)\tu^2h^{\mu\nu}h'_{\mu\nu}\\
		&\quad + (1 - n)r\tu\grad_{h_r}(\tu) + r\tu^2\Delta_{h_r}.
	  	\end{split}
	\end{equation}
	Keeping in mind that $\tu^2 = 1 + O(r)$ and that $\partial_{\mu}\tu = O(r)$, we observe that
	\begin{equation}
		\label{dindeq}
		\mathcal{D}_s(f_jr^j) = j(j + n - 2s)r^{j - 1}f_j + O(r^j).
	\end{equation}
	This equation is the same as that in \cite{gz03}; however, we have avoided expressing the metric in normal form here, since $\bar{g}$ is part of the data of the problem.
	It is also convenient to record that
	\begin{equation}
		\label{dlogeq}
		\mathcal{D}_s(g_jr^j\log r) = (2j + n - 2s)g_jr^{j - 1} + j(j + n - 2s)g_jr^{j - 1}\log(r) + o(r^{j - 1}).
	\end{equation}

	Suppose $n - 2s \notin \mathbb{N}_0$. Then (\ref{dindeq}) allows us to construct a formal solution. Set $f_0 = F_0 = f$. For $j \geq 1$ with $j \leq n$, define $f_j$ by
	\begin{align*}
		j(j + n - 2s)f_j &= -r^{1 - j}\mathcal{D}_s(F_{j - 1})|_{r = 0}\numberthis\label{fjdef}\\
		F_j &= F_{j - 1} + f_jr^j.
	\end{align*}
	Then setting $v_j = r^{n - s}F_j$, we clearly have
	\begin{equation*}
		[\Delta_g + s(n - s)]v_j = O(r^{n - s + j}).
	\end{equation*}
	By induction, we may thus find $v_n = r^{n - s}F_n$ satisfying $[\Delta_g + s(n - s)]v_n = O(r^{2n - s})$. However, examining $\mathcal{D}_s$ in (\ref{dseq}), we see that
	$\mathcal{D}_s(F_n)$ contains a term of the form $a(n - s)f\mathcal{L}r^{n}\log(r)$, for some universal $a \in \mathbb{R}$, via the terms
	$\frac{\tu^2 - 1}{r}$ and $\partial_r\tu$. Here $\mathcal{L}$ is as in (\ref{usyexp}). The induction can therefore be continued only by first adding a term of the form $g_{n + 1}r^{n + 1}\log(r)$ (see (\ref{dlogeq}))
	to cancel the $r^n\log(r)$ term before adding $f_nr^n$. Having done this, the induction can be continued to infinite order, by adding monomials and logarithmic terms, as is standard.
	(In fact, by the polyhomogeneity theorem in \cite{acf92}, higher powers of logarithms may be necessary at high order, but this will be of no concern to us. For a recent very explicit example
	of a construction with this behavior, see \cite{m18a}.)

	By Borel's lemma, this gives us an infinite-order solution $v$. Just as in \cite{gz03}, an easy induction shows that for $j = 2p$ even,
	\begin{equation*}
		f_{2p} = c_{2p,s}P_{2p,s}^{SY}f,\quad c_{2p,s} = (-1)^p\frac{\Gamma\left(s - \frac{n}{2} - p\right)}{2^{2p}p!\Gamma\left(s - \frac{n}{2}\right)},
	\end{equation*}
	where $P_{2p,s}$ is a differential operator on $M$ with principal part equal to that of $(-\Delta_k^p)$.

	The analysis of the leading part of the odd-order terms is somewhat more complicated, because $f_{2p+1}$ contains derivatives only of order $2p$, and there are several different contributions to these.
	Because
	\begin{equation}
		\label{lapderiv}
		\partial_r\Delta_{h_r}|_{r = 0} = \Delta_k\partial_r + 2\mathring{L}^{\mu\nu}\nabla_{\mu}\nabla_{\nu} + \frac{2}{n}H\Delta_k + l.o.t.s
	\end{equation}
	(where $l.o.t.s$ denotes lower-order terms), it is clear from (\ref{dseq}) that
	\begin{equation*}
		f_{2p + 1} = c_{2p + 1,s}\mathring{L}^{\mu\nu}\nabla_{\mu}\nabla_{\nu}\Delta_k^{p - 1}f + d_{2p + 1,s}H\Delta_k^pf + l.o.t.s.
	\end{equation*}
	for some constants $c_{2p + 1,s}$ and $d_{2p + 1,s}$. It is easy to compute, as a base case, that $c_{1,s} = 0$ and $d_{1,s} = \frac{n - s}{2n}$. Now, as
	$\mathring{L}^{\mu\nu}\nabla_{\mu}\nabla_{\nu}\Delta_k^{p - 1}$ and $\Delta_k(\mathring{L}^{\mu\nu}\nabla_{\mu}\nabla_{\nu}\Delta_k^{p - 2})$ have the same principal parts,
	a straightforward induction shows that
	\begin{equation*}
		c_{2p + 1,s} = \frac{1}{2(2p + 1)\left( s - \frac{n}{2} - p - \frac{1}{2} \right)}(2(-1)^{p - 1}c_{2p - 2,s} + c_{2p - 1,s}).
	\end{equation*}
	It is then likewise easy to show by induction that $c_{j,s} > 0$ whenever $s > \frac{n + j}{2}$, for odd $j$.

	We now show by induction that $d_{2p + 1,s} = \frac{n + 2p - 2 - s}{2n}(-1)^pc_{2p,s}$. This is clearly true for $p = 0$. We assume it is true for $p' < p$. Let $j = 2p + 1$.
	We next observe that $\partial_{r}\tu|_{r = 0} = -\frac{1}{2n}H$ (see
	(2.6) in \cite{g17}), while $h^{\mu\nu}h'_{\mu\nu}|_{r = 0} = -2H$. As $\frac{c_{j - 3,s}}{c_{j - 1,s}} = -(j - 1)(2s - n - j)$, we find from (\ref{dseq}) and (\ref{fjdef}) that
	\begin{align*}
		j(2s - n - j)d_{j,s} &= (-1)^pc_{j - 1,s}\left[ \frac{j - 1}{2n}(1 - 2j - 3n + 4s) + \frac{n - s}{2n}(2s - n - 1)\right.\\
		&\quad- \left.\frac{c_{j - 3,s}}{nc_{j - 1,s}} + (-1)^p\frac{d_{j - 2,s}}{c_{j - 1,s}} \right]\\
		&= (-1)^p\frac{c_{j - 1,s}}{2n}\left[ (j - 1)(s - n + 3sn - 2s^2 - n^2 + j(3s - 2n - j))\right.\\
	&\quad+\left. (n - s)(2s - n - 1)\right]\\
	&= (-1)^p\frac{jc_{j - 1,s}}{2n}(j + n - s - 1)(2s - n - j).
	\end{align*}
	This yields the claim regarding $d_{2p + 1,s} = d_{j,s}$. We emphasize that $d_{2p+1,s}$ is smooth across $s = \frac{n + 2p + 1}{2}$.

	For notational consistency, we define $P_{2p + 1,s}^{SY}$ such that $f_{2p + 1} = c_{2p + 1,s}P_{sp + 1,s}f$.

	Now suppose that $2s - n = q$, where $1 \leq q \leq n$, as in the hypothesis. The above construction works until the determination of $f_q$; then the coefficient in (\ref{dindeq}) vanishes, and we
	cannot remove the $r^{q - 1}$ term from $\mathcal{D}_q(F_{q - 1})$. This may be addressed by adding a term of the form $g_qr^q\log(r)$, where $g_q$ is determined by (\ref{dlogeq}) to cancel
	the $r^{q - 1}$ term in $\mathcal{D}_q(F_{q - 1})$. Note that since $q \leq n$, this happens before the logarithm at order $n + 1$ discussed above.
	The expansion then continues as before, with additional logarithmic terms appearing at order $n + 1$ (which limits the smoothness of $G$). The remainder of the claims follow immediately. As in \cite{gz03}, it is clear
	that (\ref{gek}) holds with $P_q^{SY} = P_{q,\frac{n + q}{2}}^{SY}$
	and with $c_q = \res_{s = \frac{n + q}{2}}c_{q,s}$. In particular,
	\begin{equation}
		\label{cqeq}
		\begin{split}
			c_{2p} &= (-1)^p[2^{2p}p!(p - 1)!]^{-1}\\
			c_1 &= 0\\
			c_{2p + 1} &= \frac{1}{2(2p + 1)}\left( 2(-1)^{p - 1}c_{2p - 2,\frac{n + 2p + 1}{2}} + c_{2p - 1,\frac{n + 2p + 1}{2}} \right) > 0.
		\end{split}
	\end{equation}
	Since $d_{2p + 1,s}$ does not contribute to the residue of $P_{2p + 1,s}$ at $s = \frac{n + 2p + 1}{2}$, the principal part of $P_q^{SY}$ is as claimed.
\end{proof}

Note that the conformal invariance property of the operators $P_{q}^{SY}$ can be expressed as follows instead of using bundles: if $\tilde{\bar{g}} = e^{2\omega}\bar{g}$, then
\begin{equation*}
	\widetilde{P}_q^{SY}f = e^{-\frac{(n + q)\omega}{2}}P_q^{SY}\left(e^{\frac{(n - q)\omega}{2}}f\right).
\end{equation*}

\section{Global analysis}\label{globsec}

In section \ref{backsec}, we described the results on the scattering operator and the Poisson operators proved in \cite{gz03}, based on \cite{mm87}. Actually, both
of those papers assume that one is dealing with a smooth AH metric, i.e., a metric whose compactifications are smooth. Although such is not true of AH Einstein metrics
in odd dimension (even boundary dimension), the existence theory for such metrics (given a conformal infinity) is very difficult and largely open, and both existence and
uniqueness are known sometimes to fail. Therefore, papers such as \cite{gz03} and \cite{fg02} made the reasonable decision to restrict attention to smooth \emph{formal} Einstein metrics,
which are Einstein up to as high an order as possible while remaining smooth. The resulting quantities are not well-defined, if global; but given the non-uniqueness of (some) Einstein metrics,
they would not be in any event, in general.

In contrast, the singular Yamabe problem offers both existence and uniqueness for any compact Riemannian manifold with boundary. There is therefore the opportunity to
define genuinely well-defined global quantities if one uses the singular Yamabe metric as the AH metric for the scattering problem. On the other hand, this raises slight technical difficulties:
as we have seen, the singular Yamabe metric is not generally smooth, but has a logarithmic term, and so technically we cannot simply apply results from \cite{gz03} or
\cite{mm87} in a simplistic fashion. We therefore state the following theorem; since the main goals of our paper are geometric, we defer discussing the proof till appendix \ref{proofapp}.

\begin{theorem}
	\label{poissonthm}
	Let $(X^{n + 1},\bar{g})$ be a smooth Riemannian manifold with boundary $M$, let $k = \bar{g}|_{TM}$, and let $g$ be the singular Yamabe metric corresponding to $\bar{g}$. Let $r$ be
	the $\bar{g}$-distance to $M$ on $X$. There is a unique family of Poisson operators
	\begin{equation*}
		\mathcal{P}(s):C^{\infty}(M,|N^*M|^{n - s}) \to C^{\infty}(\mathring{X})
	\end{equation*}
	for $\frac{n}{2} \leq \re s < n + \frac{1}{2}$, which is meromorphic in $\left\{n + \frac{1}{2} > \re s > \frac{n}{2} \right\}$ with poles only for such $s$ that $s(n - s) \in \sigma(\Delta_g)$, and continuous up to
	$\left\{ \re s = n/2 \right\}\setminus \left\{ n/2 \right\}$, such that
	\begin{equation*}
		(\Delta_g + s(n - s))\mathcal{P}(s) = 0
	\end{equation*}
	with expansions
	\begin{align*}
		\mathcal{P}(s) &= r^{n - s}F + r^sG \text{ if } s \notin n/2 + \mathbb{N}_0/2\\
		\mathcal{P}(s)f &= r^{n/2 - q/2}F + Gr^{n/2 + q/2}\log r \text{ if } s = n/2 + q/2, q \in \mathbb{N},
	\end{align*}
	where in the first case, both $F, G \in C^n(X)$ are polyhomogeneous, and in the second case, $F \in C^{\infty}(X)$ and $G \in C^n(X)$ is polyhomogeneous.
	If $s = \frac{n}{2} + j$, then $G|_M = -2p_{2q}f$, where $p_{2q}$ is a differential operator on $M$ of order $M$ having principal part $\sigma_{2j}(p_{2j}) = c_j\sigma_{2j}(\Delta_k^j)$, where
	$c_j$ is as in (\ref{cqeq}).
\end{theorem}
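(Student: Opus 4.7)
The plan is to adapt the Graham--Zworski Poisson operator construction from \cite{gz03} to the present setting, where $g$ is only a $C^n$ polyhomogeneous AH metric rather than a smooth one. The argument naturally splits into a formal step at the boundary, essentially carried out already in Theorem \ref{expandprop}, and a global analytic step that promotes approximate solutions to exact ones via a resolvent.

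For the formal step, I would invoke Theorem \ref{expandprop} (together with its obvious extension to generic $s$ in the strip, obtained by solving the indicial equation term by term using (\ref{dindeq})) to produce, meromorphically in $s$ on $\frac{n}{2}\leq\re s<n+\frac{1}{2}$, an approximate Poisson solution $v_s^{\mathrm{app}} = r^{n-s}F_s + r^sG_s$ away from the exceptional values $s=\frac{n+q}{2}$, and $v_s^{\mathrm{app}} = r^{(n-q)/2}(F_s + G_s r^q\log r)$ at those values, so that $(\Delta_g + s(n-s))v_s^{\mathrm{app}}$ vanishes to infinite order at $M$ with $F_s|_M = f$. A Borel summation handles the passage from formal series to an honest smooth function on $\mathring{X}$. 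The limited regularity of $F_s$ and $G_s$ in the generic case is inherited from the $r^{n+2}\log r$ term in the expansion of $u$, which enters $\mathcal{D}_s$ through the $\frac{\tilde u^2 - 1}{r}$ and $\partial_r\tilde u$ coefficients in (\ref{dseq}); at the exceptional values, the log contribution is captured by the separate $G_s r^q \log r$ factor, so $F_s$ is smooth there.

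For the analytic step, the main obstacle is that the Mazzeo--Melrose construction of the resolvent $R(s)=(\Delta_g + s(n-s))^{-1}$ via the 0-calculus in \cite{mm87} is developed under the assumption that $g$ is a smooth AH metric. I would extend it by observing that $g$ is still polyhomogeneous conormal with classical regularity $C^n$, so Mazzeo's parametrix on the 0-stretched product can still be iterated to produce a compact error, provided the order of iteration does not exceed the smoothness of $g$. For $\re s<n+\frac{1}{2}$ the required order stays within the $C^n$ budget of $g$, and one obtains a meromorphic family $R(s)$ on this strip, with poles exactly where $s(n-s)\in\sigma_{pp}(\Delta_g)$, mapping $\dot C^{\infty}(X)$ into the appropriate polyhomogeneous conormal space. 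One then defines
\[
    \mathcal{P}(s)f = v_s^{\mathrm{app}} - R(s)\bigl[(\Delta_g + s(n-s))v_s^{\mathrm{app}}\bigr],
\]
which is an exact Poisson solution since the bracketed error vanishes to infinite order at $M$. The claimed regularity and polyhomogeneity of $F$ and $G$ follow by combining the formal expansion with the mapping properties of $R(s)$.

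The hardest part is verifying that Mazzeo's parametrix survives with only finite boundary regularity of the metric, and identifying $\re s<n+\frac{1}{2}$ as the sharp strip on which this construction works; any attempt to move past this threshold would require controlling the interaction of the indicial root $r^s$ with the logarithmic non-smoothness of $g$ at order $r^{n+2}$, at which point the polyhomogeneous control breaks down. Uniqueness comes from the distinguishability of the indicial roots $n-s$ and $s$ for $2s-n\notin\mathbb{N}_0$, combined with standard $L^2$ arguments away from the point spectrum, and meromorphicity in $s$ then follows from the analytic Fredholm theorem. The identification of $G|_M$ as $-2p_{2q}f$ with principal part $c_j\sigma_{2j}(\Delta_k^j)$ and $c_j$ as in (\ref{cqeq}) is immediate from Theorem \ref{expandprop}.
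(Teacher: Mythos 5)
Your proposal follows essentially the same route as the paper: construct an infinite-order approximate Poisson solution from the formal boundary expansion of Theorem~\ref{expandprop} (extended to generic $s$ via the indicial relation (\ref{dindeq}) and Borel summation), then correct it by applying the Mazzeo--Melrose resolvent $R(s)$, suitably extended from the smooth AH setting to the $C^n$ polyhomogeneous one, to the $O(r^\infty)$ error, yielding $\mathcal{P}(s)f = v_s^{\mathrm{app}} - R(s)[(\Delta_g + s(n-s))v_s^{\mathrm{app}}]$. Your identification of where the log obstruction comes from (the $\frac{\tilde u^2-1}{r}$ and $\partial_r\tilde u$ coefficients in $\mathcal{D}_s$) and the resulting bound $\re s<n+\tfrac12$ also agree with the paper's argument.

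One small point of emphasis: the paper locates the $\re s<n+\tfrac12$ restriction purely in the formal boundary expansion. Because $\tilde u$ has a $\mathcal{L}r^{n+1}\log r$ term, the expansion of $\mathcal{P}(s)f$ starting at $r^{n-s}$ picks up a forced logarithm at order $r^{(n-s)+(n+1)}=r^{2n+1-s}$, and one requires this to fall after the second indicial root $r^s$, i.e.\ $2n+1-s>s$. Your first explanation (that the parametrix iteration order must stay within the $C^n$ budget of $g$) conflates this with a separate regularity issue in the Mazzeo--Melrose construction, which the paper says actually goes through unchanged for $C^n$ polyhomogeneous metrics; but your closing sentence about the interaction of $r^s$ with the log at order $r^{n+2}$ in $u$ gives the correct obstruction, so the gap is expository rather than substantive.
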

As in the smooth case, we define the scattering operator for $s > \frac{n}{2}$, $s(n - s) \notin \sigma(\Delta_g)$ and $2s - n \notin \mathbb{N}$, by $S(s)f = G|_M$, where
$\mathcal{P}(s)f = r^{n - s}F + r^sG$ as above. We obtain the following (see appendix \ref{proofapp}).
\begin{theorem}
	\label{scatterthm}
	The scattering operator $S(s)$ has a meromorphic extension to the strip $-\frac{1}{2} < s < n + \frac{1}{2}$, regular for $\re s = \frac{n}{2}$. On that strip, Propositions 3.6 - 3.10 of
	\cite{gz03} remain true.
\end{theorem}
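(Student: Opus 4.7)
The plan is to follow the meromorphic-continuation argument of \cite{gz03}, together with the resolvent construction of \cite{mm87}, while carefully tracking the limited regularity of the singular Yamabe compactification. First, I would use Theorem \ref{poissonthm} to define $S(s)f = G|_M$ for $\re s > \frac{n}{2}$ with $s(n - s) \notin \sigma_{pp}(\Delta_g)$ and $2s - n \notin \mathbb{N}$; the continuity of $\mathcal{P}(s)$ up to $\re s = \frac{n}{2}$ asserted in that theorem, together with the absence of embedded eigenvalues on the continuous spectrum, delivers regularity across the critical line $\re s = \frac{n}{2}$.

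To extend to $\re s < \frac{n}{2}$, I would employ a functional equation of the form $S(s)S(n - s) = \mathrm{Id}$ (after the usual identification via $|N^*M|^s$), which follows from the uniqueness of $\mathcal{P}(s)$ and a symmetry argument: $\mathcal{P}(s)f$ and $\mathcal{P}(n - s)(S(s)f)$ solve the same eigenvalue equation with matching leading asymptotics, and therefore agree. Inverting propagates meromorphicity from the right-hand strip $\frac{n}{2} < \re s < n + \frac{1}{2}$ to its reflection $-\frac{1}{2} < \re s < \frac{n}{2}$, and the same inversion identifies the poles and their residues with those on the right.

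The main technical obstacle is the limited regularity. Because $u$ is only polyhomogeneous, carrying an $\mathcal{L}r^{n+2}\log r$ term, the metric $g$ is a $C^n$ AH metric rather than smooth, and those steps in \cite{mm87,gz03} that silently use smoothness must be redone with explicit bookkeeping of derivative counts. The restriction $\re s < n + \frac{1}{2}$ arises precisely because at that threshold the formal construction of Theorem \ref{expandprop} would first need to differentiate $u$ past order $n + 1$, at which point the $\log$ term enters and the leading and recessive asymptotics can no longer be cleanly separated. Within the stated strip, only finitely many derivatives of $u$ and $\bar{g}$ are ever invoked, all within the available regularity; similarly, the Mazzeo-Melrose parametrix construction applies after replacing the smooth-calculus estimates with finite-regularity analogues on the zero-calculus pseudodifferential algebra.

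Finally, each of Propositions 3.6--3.10 of \cite{gz03} is assembled from three ingredients: the meromorphic Poisson family, the local asymptotic analysis near $M$ (now provided by Theorem \ref{expandprop} in place of the corresponding Einstein construction), and the functional equation. All three ingredients now hold on $-\frac{1}{2} < \re s < n + \frac{1}{2}$, so each proof transports once the smoothness hypotheses are weakened to the polyhomogeneous finite-regularity statements above. The detailed verification, including the finite-regularity variant of the Mazzeo-Melrose resolvent construction, is deferred to appendix \ref{proofapp}.
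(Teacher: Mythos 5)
Your overall strategy matches the paper's: adapt the Mazzeo--Melrose resolvent construction and the Graham--Zworski scattering framework to the polyhomogeneous, finite-regularity setting, and show the $s < n + \tfrac{1}{2}$ restriction is what keeps the metric's logarithmic term from interfering with the indicial-root asymptotics. But you propose to extend $S(s)$ from $\re s > \tfrac{n}{2}$ to $\re s < \tfrac{n}{2}$ via the functional equation $S(s)S(n - s) = \operatorname{Id}$, whereas the paper's appendix does not use the functional equation at all. The paper instead adapts the Mazzeo--Melrose theorem itself (the interior elliptic step, the normal-operator step, and the Borel-lemma formal-expansion step, with logarithmic corrections now allowed) to give a meromorphic continuation of the resolvent $R(s)$ on the whole strip, and then derives both $\mathcal{P}(s)$ and $S(s)$ from $R(s)$ directly, exactly as in \cite{gz03}. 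Your functional-equation route is workable in principle, but as stated it risks circularity: the identity $S(s)S(n - s) = \operatorname{Id}$ is ordinarily established \emph{after} one already has a meromorphic family of Poisson operators on both sides of $\re s = \tfrac{n}{2}$, since the ``symmetry argument'' you invoke requires $\mathcal{P}(n - s)$ to exist as a holomorphic family for $\re s$ slightly greater than $\tfrac{n}{2}$ --- which is not provided by Theorem \ref{poissonthm} alone (that theorem only gives $\mathcal{P}$ for $\re s \geq \tfrac{n}{2}$). To close the loop you would still need the resolvent continuation to define $\mathcal{P}(n - s)$ on that side, at which point the functional equation is a consequence rather than the mechanism. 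It would be cleaner, and closer to what the paper actually does, to let the (finite-regularity) resolvent continuation do the heavy lifting.

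One more small inaccuracy: the reason for the bound $s < n + \tfrac{1}{2}$ is not that the construction must ``differentiate $u$ past order $n + 1$.'' Rather, the $r^{n + 1}\log r$ term in the geodesic compactification produces, in any solution $\mathcal{P}(s)f$ whose expansion starts at $r^{n - s}$, a term of the form $r^{2n + 1 - s}\log r$; one needs this term to occur strictly after the second indicial root $r^{s}$, which is the condition $s < 2n + 1 - s$. That is the bookkeeping the paper carries out, and it is worth stating precisely since it is exactly where the non-smoothness of the singular Yamabe metric enters the global analysis.
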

The reason for the restriction on $s$ in both of these statements is the following: as seen in the proof of Proposition \ref{expandprop}, the logarithmic term in the expansion of the singular Yamabe function
$u$ implies that a term of the form $r^{2n + 1 - s}\log(r)$ will appear in the asymptotic expansion of $\mathcal{P}(s)f$. In order to be able to ignore this phenomenon in analyzing the scattering operator,
we require that the higher indicial root, $s$, occur before this power, i.e., $s < 2n + 1 - s$. The remaining cases could be analyzed, but they would be rather more involved, and in any event are not necessary
for what we wish to study here.

We are ready to prove Theorem \ref{PSYthm}.
\begin{proof}[Proof of Theorem \ref{PSYthm}]
	The existence and meromorphicity of the scattering operator is Theorem \ref{scatterthm}. Theorem \ref{expandprop}, together with Proposition 3.6 of \cite{gz03}, gives the remainder of the claims.
\end{proof}

Having defined the scattering operator, we can also finally define our $\syQ$-curvature as in the introduction:
\begin{equation}
	\label{syqdef}
	\syQ = c_{n}^{-1}S(n)1,
\end{equation}
where $c_q$ is as in (\ref{cqeq}). Note that although $S(s)$ actually has a pole at $s = n$, the residue has no constant term, so that $S(s)1$ continues holomorphically across $s = n$. This is discussed in general
in \cite{gz03}.

We also can now define the ``fractional order GJMS operators'' for this setting, following \cite{cg11} (where in fact this setting was mentioned, but not discussed in detail). We define
$P_{2\gamma}^{SY} = S(\frac{n}{2} + \gamma)$. Once again this notation differs by a factor of two from the notation in \cite{cg11}, which is done in order to be consistent with the integer case. For us,
$P_{2\gamma}$ can be viewed as a pseudodifferential operator of order $2\gamma$; see p. 107 of \cite{gz03}.

We next prove the following theorem. The proof in the Einstein case is given in \cite{fg02}, and in fact the same proof works here. To refresh the reader's memory and demonstrate that the proof
carries over, and because we will want to refer back to it, we reproduce the proof here.
\begin{theorem}
	\label{globuthm}
	Let $g$ be the singular Yamabe metric on $(X^{n + 1},\bar{g})$, as above; and let $r$ be the $\bar{g}$-distance function to $M^n = \partial X$. There is a unique function $U \in C^{\infty}(\mathring{X})$
	solving
	\begin{equation*}
		-\Delta_gU = n,
	\end{equation*}
	with asymptotics
	\begin{equation*}
		U = \log(r)  + A + Br^n\log r,
	\end{equation*}
	where $A \in C^{\infty}(X)$ and $B \in C^n(X)$, and where $A|_M = 0$ and $B|_M = -2c_n\syQ$.
\end{theorem}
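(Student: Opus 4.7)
The plan is to construct $U$ first as a formal asymptotic series near $M$ and then extend globally. Working in the geodesic normal form of Lemma~\ref{geodlem}, I write $g=\hat r^{-2}(d\hat r^2+h_{\hat r})$, in which coordinates the singular Yamabe function is simply $\hat r$ and the operator $\mathcal{D}_s$ of~\eqref{dseq} reduces to its Einstein-like form (in particular $\mathcal{D}_n(1)\equiv 0$). A direct calculation using the conformal Laplacian formula gives
\[
-\Delta_g\log\hat r = n - \tfrac{1}{2}\hat r\,h^{\mu\nu}h'_{\mu\nu},
\]
so $\log\hat r$ already solves $-\Delta_g U=n$ to leading order. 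Setting $U=\log\hat r+A$ reduces the equation to $-\Delta_g A=\tfrac{1}{2}\hat r\,h^{\mu\nu}h'_{\mu\nu}$. Using the easily computed identities $-\Delta_g(a_j\hat r^j)=j(n-j)a_j\hat r^j+O(\hat r^{j+1})$ and $-\Delta_g(B\hat r^n\log\hat r)=-nB\hat r^n+O(\hat r^{n+1}\log\hat r)$, the ansatz $A=\sum_{j=1}^{n-1}a_j(x)\hat r^j+B\hat r^n\log\hat r+\cdots$ is solved inductively: each $a_j$ for $1\le j\le n-1$ is uniquely fixed by the indicial equation, and at order $\hat r^n$ the vanishing indicial factor forces the introduction of the log term $B\hat r^n\log\hat r$, whose coefficient $B|_M$ is determined by the residual obstruction $O_n$ via $B|_M=-O_n|_M/n$. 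Further logarithmic terms appear past order $\hat r^{n+1}$, inherited from the polyhomogeneous structure of $u$ in~\eqref{usyexp}; Borel summation produces $U_{\mathrm{asym}}$ on a collar with $-\Delta_g U_{\mathrm{asym}}-n=O(\hat r^\infty)$.

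To identify $B|_M=-2c_n\syQ$, I would compare with the Poisson recursion of Theorem~\ref{expandprop} at $f=1$, $s=n$, $q=n$. Differentiating the relation $[\Delta_g+s(n-s)]\,r^{n-s}=r^{n-s+1}\mathcal{D}_s(1)$ in $s$ at $s=n$, and using $\mathcal{D}_n(1)\equiv 0$ in geodesic form, gives the key identity
\[
-\Delta_g\log\hat r - n = \hat r\,\partial_s\mathcal{D}_s(1)\big|_{s=n},
\]
so that the source term of the $U$-recursion is exactly $-\hat r$ times the $s$-derivative of the operator governing the Poisson recursion. Since $\mathcal{D}_n(1)=0$ also implies $a_j(n)=0$ in the Poisson expansion of $F_n$ for $1\le j\le n-1$, the only nontrivial obstruction in that scheme appears at order $\hat r^{n-1}$ in $\mathcal{D}_n(F_{n-1})$, and by the normalization $G|_M=-2c_nP_n^{SY}(1)$ from Theorem~\ref{expandprop} together with $\syQ=c_n^{-1}S(n)1$, its boundary value equals $-2nc_n\syQ$. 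Transferring this through the above differential identity yields $B|_M=-O_n|_M/n=-2c_n\syQ$.

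For global existence, choose a cutoff $\chi\in C_c^\infty(X)$ equal to $1$ near $M$ and set $U_\chi=\chi U_{\mathrm{asym}}$; then $E:=-\Delta_g U_\chi-n$ is smooth and compactly supported in $\mathring X$ modulo $O(\hat r^\infty)$ tails which can be absorbed into the construction of $U_{\mathrm{asym}}$. Since $0$ lies in the resolvent set of $\Delta_g$---the essential spectrum of $-\Delta_g$ begins at $n^2/4$, and $L^2$-harmonic functions vanish by integration by parts---the equation $\Delta_g v=E$ has a unique $L^2$ solution $v$, smooth in $\mathring X$ by elliptic regularity, with $v=O(\hat r^n)$ polyhomogeneous at $M$ via the scattering resolvent structure at $s=n$ from Theorem~\ref{poissonthm}; such $v$ contributes only beyond order $\hat r^{n-1}$, leaving $A|_M=0$ and $B|_M$ unchanged. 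Setting $U=U_\chi-v$ completes the existence. Uniqueness follows from the maximum principle: if $U_1,U_2$ both satisfy the conclusion, then $W=U_1-U_2$ is $\Delta_g$-harmonic with $W=O(\hat r)$ at $M$, so for any $\varepsilon>0$ there is a neighborhood of $M$ on which $|W|<\varepsilon$, and the maximum principle on the complement forces $|W|<\varepsilon$ globally, giving $W\equiv 0$. The main obstacle is that the limited $C^n$ regularity of $g$---caused by the log terms in~\eqref{usyexp} starting at order $r^{n+2}$---takes the problem outside the standard smooth AH scattering theory of \cite{mm87,gz03}, so the spectral and resolvent statements invoked above depend on the refinements of Theorem~\ref{poissonthm} established in the appendix.
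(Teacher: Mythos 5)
Your proof takes a genuinely different route from the paper's. The paper defines $U := -\frac{d}{ds}\mathcal{P}(s)1\big|_{s=n}$, where $\mathcal{P}(s)$ is the Poisson operator; existence and uniqueness, the equation $\Delta_g U = -n$, and the full asymptotic expansion all drop out in a few lines by differentiating the identity $(\Delta_g+s(n-s))\mathcal{P}(s)1 = 0$ and the expansion $\mathcal{P}(s)1 = r^{n-s}F_s + r^sG_s$. In particular, from $U = F_n\log r - F_n' - G_n r^n\log r - r^n G_n'$ the coefficient of $r^n\log r$ at the boundary is $-2c_n Q_n^{SY}$ because \emph{two} contributions of $-c_n Q_n^{SY}$ add: one from the $r^n$-coefficient of $F_n$ (since $\mathcal{P}(n)1 = 1$ forces $F_n = 1 - r^nG_n$) and one from $-G_n|_M = -c_nQ_n^{SY}$. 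Your construction instead builds $U$ from scratch: a formal expansion in geodesic form starting from $\log\hat r$, a Borel sum, a cutoff, an $L^2$ solve, and a maximum-principle uniqueness argument. This is a valid alternative strategy, and the indicial computations and the "key identity" $-\Delta_g\log\hat r - n = \hat r\,\partial_s\mathcal{D}_s(1)|_{s=n}$ are correct.

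However, your justification of $B|_M = -2c_nQ_n^{SY}$ has a genuine gap. You write that ``by the normalization $G|_M = -2c_nP_n^{SY}(1)$ \ldots together with $Q_n^{SY} = c_n^{-1}S(n)1$, its boundary value equals $-2nc_nQ_n^{SY}$.'' But $P_n^{SY}(1) = 0$ (constants lie in $\ker P_n^{SY}$, which is precisely why $\mathcal{P}(n)1 \equiv 1$ has no log term), so the cited normalization gives $0$, not a multiple of $Q_n^{SY}$. The quantity $Q_n^{SY}$ enters via the holomorphic continuation $G_n|_M = S(n)1$, not via $P_n^{SY}(1)$, and getting the factor of $2$ requires tracking both the $r^n$-coefficient of $F_n$ and $G_n|_M$, as in the paper's derivative computation. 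You essentially rediscovered the right differential relation between the two recursions, but the final bookkeeping step that produces the $-2c_nQ_n^{SY}$ is asserted rather than proved. I'd recommend replacing this step by the paper's direct computation of $-\partial_s(r^{n-s}F_s + r^sG_s)|_{s=n}$, which makes the two contributions to the $r^n\log r$ coefficient transparent.
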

\begin{proof}
	By uniqueness, we have $\mathcal{P}(n)1 = 1$. Now for all $s$ near $n$, we of course have
	\begin{equation}
		\label{theproofeq}
		(\Delta_g + s(n - s))\mathcal{P}(s)1 = 0,
	\end{equation}
	and $\mathcal{P}(s)1$ is a holomorphic family of functions on $\mathring{X}$.

	Set $U = -\frac{d}{ds}\mathcal{P}(s)|_{s = n}$. Then differentiating (\ref{theproofeq}) with respect to $s$ and taking $s = n$ gives
	\begin{equation*}
		\Delta_gU = -n.
	\end{equation*}
	On the other hand, near $s = n$ we can write
	\begin{equation}
		\label{theproofeq2}
		\mathcal{P}(s)1 = r^{n - s}F_s + r^sG_s.
	\end{equation}
	This expansion can be extended to $s = n$, where $\mathcal{P}(n)1 = 1$, and it was shown in \cite{gz03} and pointed out in \cite{fg02} that the extension is unique subject to the requirement that
	$F_s, G_s$ be chosen to depend holomorphically on $s$ across $s = n$. (Actually, in the Einstein case considered in \cite{fg02}, this argument is needed only for $n$ even, due to evenness properties
	available in that case. In our situation, it applies to both even and odd $n$.) 

	It follows from the definition (\ref{syqdef}) of $\syQ$ and the definition of the scattering matrix that $G_n|_M = c_{n}\syQ$; thus,
	$F_n = 1 - c_{n}\syQ r^n$.
	
	We can differentiate both sides of (\ref{theproofeq2}) and conclude
	\begin{equation}
		\label{uexpeq}
		U = F_n\log(r) - F_n' - G_nr^nlog(r) - r^nG_n',
	\end{equation}
	with $' = \frac{d}{ds}$. Since $F_s|_M \equiv 1$, the second term vanishes. The result follows from the above asymptotics.
\end{proof}

Precisely the same arguments given in section 3 of \cite{fg02} for the even-$n$ Einstein case now immediately work, given Proposition \ref{expandprop} and Theorem \ref{globuthm}, to produce Theorem \ref{thmQ} and
Corollary \ref{corQ}. We do not reproduce those proofs here.

Note that it follows from Corollary \ref{corQ} and from \cite{g17} that the boundary integral of $\syQ$ is a global conformal invariant. Furthermore, two consequences of the transformation rule in
Theorem \ref{thmQ} is that $\syQ$ is generically nonzero (since $P_n^{SY}$ is generically nontrivial); and, although $\syQ$ is extrinsic in the sense that it depends on $\bar{g}$ and not only
$k = \bar{g}|_{TM}$, it is nevertheless the case that \emph{given a conformal class} $[\bar{g}]$, $\syQ$ depends only on $\bar{g}|_{TM}$.

We may now prove Theorem \ref{globthm1} using an argument from \cite{cqy08}.
\begin{proof}[Proof of Theorem \ref{globthm1}]
	Let $U$ be as in theorem \ref{globuthm}. Then by Green's theorem, we have
	\begin{equation}
		\label{volinteq}
		\vol_g(\left\{ r > \varepsilon \right\}) = -\frac{1}{n}\int_{r > \varepsilon}\Delta_gUdv_g = -\frac{1}{n}\int_{r = \varepsilon}\frac{\partial U}{\partial n}dv_{h_{\varepsilon}} =
		\frac{1}{n}\varepsilon^{1 - n}\oint_M\left.\frac{\partial U}{\partial r}\right|_{r = \varepsilon}dv_{h_{\varepsilon}},
	\end{equation}
	since the outward normal to $\left\{ x = \varepsilon \right\}$ is $-\varepsilon\frac{\partial}{\partial r}$.
	Now, by (\ref{uexpeq}),
	\begin{equation*}
		\begin{split}
			\frac{\partial U}{\partial r} = &\frac{1}{r} - \cdots - (n - 1)a_{n-1}'r^{n - 1} - na_n'r^n - n\left( \left.\frac{d}{ds}\right|_{s = n}S(s)1 \right)r^{n - 1}\\
			&\quad-2c_{n}\syQ r^{n - 1} - 2nc_{n}\syQ r^{n - 1}\log(r) + o(r^{n - 1}).
		\end{split}
	\end{equation*}
	Since by \cite{g17} $\oint_Mv^{(n)}dv_k = \mathcal{E} = 2c_{n}\oint_M\syQ dv_k$,
	the result follows from (\ref{volinteq}) by collecting zeroth-order terms in $\varepsilon$.
\end{proof}

Finally, with the above pieces all in place, the proof of Theorem \ref{globthm2} is identical to that in \cite{cqy08}.

\section{Explicit Computations}
\label{compsec}

In this section, we compute several of the quantities and operators defined earlier for low dimensions. Throughout, we work near the boundary $M^n = \partial X^{n + 1}$, on a collar neighborhood $V$
with a diffeomorphism $\psi:[0,\varepsilon)_r \times M \to V$ such that $\psi^*\bar{g} = dr^2 + h_r$. We use Greek indices $1 \leq \mu,\nu \leq n$ on $M$, and Latin indices $0 \leq i, j \leq n$ on $X$.
In particular, $x^0 = r$. We set $k = h_0 = \bar{g}|_{TM}$.

We begin with the following lemma, which is an expansion of the metric in Fermi coordinates. This is
well-known to first order. The result to this order is contained in \cite{gg19}.

\begin{lemma}
	\label{fermilem}
	Let $X^{n + 1}$ be a smooth manifold with boundary $\partial X = M^n$, and $\bar{g}$ a smooth metric on $X$. Suppose that, near $M$, the metric $g$ is written as
	$\bar{g} = dr^2 + h_r$, where $h_r$ is a one-parameter family of metrics on $M$. Let $k = \bar{g}|_{TM}$. Then
	\begin{equation*}
		h_{\mu\nu} = k_{\mu\nu} - 2rL_{\mu\nu} + r^2(L_{\mu}^{\sigma}L_{\nu\sigma} - \overline{R}_{0\mu\nu 0}) - \frac{1}{3}r^3(\overline{\nabla}_0\overline{R}_{0\mu\nu 0}-4 L^{\sigma}_{(\mu}\overline{R}_{\nu)00\sigma})
		+ O(r^4),
	\end{equation*}
	where $L$ is the second fundamental form of $M$ with respect to $\bar{g}$ and $\overline{R}$ is the curvature tensor of $\bar{g}$. Moreover, all coefficients are evaluated at $r = 0$, and indices raised
	with $k^{-1}$.
\end{lemma}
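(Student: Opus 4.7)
The plan is to Taylor expand $h_{\mu\nu}(r)$ around $r = 0$ and determine the coefficients via the Riccati equation for the shape operator of the level sets $\{r = \mathrm{const}\}$. This is a standard tube/Jacobi-field computation, carried out one order higher than what is typically recorded.

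First, I would use that in Fermi coordinates the $\partial_r$-curves are unit-speed $\bar g$-geodesics orthogonal to $M$. This forces $\Gamma^0_{00} = \Gamma^\mu_{00} = \Gamma^0_{0\mu} = 0$, together with
\begin{equation*}
\Gamma^0_{\mu\nu} = L_{\mu\nu}(r), \qquad \Gamma^\nu_{0\mu} = -A^\nu_\mu(r),
\end{equation*}
where $L_{\mu\nu}(r) = -\tfrac{1}{2}\partial_r h_{\mu\nu}$ is the second fundamental form of the $r$-level set (with the paper's inward-normal convention) and $A^\nu_\mu(r) = h^{\nu\sigma}L_{\sigma\mu}$ its shape operator. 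These immediately yield the first two terms of the expansion, $h_{\mu\nu}(0) = k_{\mu\nu}$ and $\partial_r h_{\mu\nu}|_0 = -2L_{\mu\nu}$.

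Next, I would compute $\overline R^\nu{}_{0\mu 0}$ directly from the Riemann tensor formula, using the above Christoffel symbols, and get $\overline R^\nu{}_{0\mu 0} = -\partial_r A^\nu_\mu + A^\nu_\sigma A^\sigma_\mu$. Lowering indices with $h$ and applying $\partial_r h_{\mu\sigma} = -2L_{\mu\sigma}$ yields the Riccati identity
\begin{equation*}
\partial_r L_{\mu\nu} = -L_\mu^\sigma L_{\sigma\nu} + \overline R_{0\mu\nu 0},
\end{equation*}
valid for every $r$. Evaluating at $r = 0$ and using $\partial_r^2 h_{\mu\nu} = -2\partial_r L_{\mu\nu}$ produces the $r^2$ coefficient $L_\mu^\sigma L_{\nu\sigma} - \overline R_{0\mu\nu 0}$ claimed in the lemma.

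For the $r^3$ term I would differentiate the Riccati identity once more. Three ingredients enter: (i) differentiate $L_\mu^\sigma = h^{\sigma\tau}L_{\tau\mu}$ using $\partial_r h^{\sigma\tau} = 2L^{\sigma\tau}$; (ii) observe that the resulting cubic-in-$L$ contributions cancel by the symmetry $L_{\mu\nu} = L_{\nu\mu}$; and (iii) convert the coordinate derivative $\partial_r \overline R_{0\mu\nu 0}|_0$ into the covariant one $\overline\nabla_0 \overline R_{0\mu\nu 0}|_0$, the correction coming from $\Gamma^\sigma_{0\mu}|_0 = -L^\sigma_\mu$ acting on the inner indices of $\overline R$. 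After applying the symmetries $\overline R_{0\mu\nu 0} = \overline R_{0\nu\mu 0}$ and $\overline R_{\nu 00\sigma} = \overline R_{0\sigma\nu 0}$ to combine the various $L{\cdot}\overline R$ contractions into a single symmetrized term, one obtains
\begin{equation*}
\partial_r^2 L_{\mu\nu}|_0 = \overline\nabla_0 \overline R_{0\mu\nu 0} - 4L^\sigma_{(\mu}\overline R_{\nu)00\sigma},
\end{equation*}
and $\tfrac{1}{6}\partial_r^3 h_{\mu\nu}|_0 = -\tfrac{1}{3}\partial_r^2 L_{\mu\nu}|_0$ then delivers the stated $r^3$ coefficient. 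The main obstacle is purely bookkeeping: tracking signs coming from antisymmetry and pair-symmetry of $\overline R$, and verifying the cubic-$L$ cancellation. Once the Riccati identity is in place, everything else is a mechanical three-term Taylor expansion.
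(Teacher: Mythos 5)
The paper gives no proof of Lemma~\ref{fermilem}; it simply remarks that the expansion ``to this order is contained in \cite{gg19}.'' So there is no internal proof to compare against. Your proposal is a self-contained derivation by the standard Riccati/Jacobi-field method in Fermi coordinates, and the steps you outline are correct. Specifically: with $\bar g = dr^2 + h_r$ the only nonzero Christoffel symbols with a $0$ index are $\Gamma^0_{\mu\nu} = L_{\mu\nu}$ and $\Gamma^\nu_{0\mu} = -A^\nu_\mu$, which yield the identity $\partial_r L_{\mu\nu} = -L_\mu^\sigma L_{\sigma\nu} + \overline R_{0\mu\nu 0}$ in the paper's curvature sign convention (the one for which $\overline R_{0\mu\nu 0} = k_{\mu\nu}$ on the unit sphere, so that $L=0$ gives $h_{r} = k - r^2 k + O(r^4)$); differentiating this once, the two cubic-in-$L$ contributions $L^\sigma_\tau L^\tau_\mu L_{\sigma\nu}$ and $-L_\mu^\sigma L_\sigma^\alpha L_{\alpha\nu}$ do cancel by symmetry of $L$; and the conversion $\partial_r \overline R_{0\mu\nu 0} = \overline\nabla_0 \overline R_{0\mu\nu 0} + L^\sigma_\mu \overline R_{0\sigma\nu 0} + L^\sigma_\nu \overline R_{0\mu\sigma 0}$ combined with $\partial_r(L_\mu^\sigma L_{\sigma\nu})|_0 = L^\sigma_\nu \overline R_{0\sigma\mu 0} + L^\sigma_\mu \overline R_{0\sigma\nu 0}$ collapses, after applying $\overline R_{0\mu\sigma 0} = \overline R_{0\sigma\mu 0}$ and the pair symmetry $\overline R_{0\sigma\mu 0} = \overline R_{\mu 00\sigma}$, to exactly $\overline\nabla_0\overline R_{0\mu\nu 0} - 4L^\sigma_{(\mu}\overline R_{\nu)00\sigma}$. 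With $\tfrac{1}{6}\partial_r^3 h_{\mu\nu}|_0 = -\tfrac13 \partial_r^2 L_{\mu\nu}|_0$ this gives the stated $r^3$ coefficient. The only thing you deferred is the sign bookkeeping, which I have checked: it works out with the paper's conventions. So this is a correct proof, supplying an argument where the paper only offers a citation.
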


To compute the local invariants $\syQ$ and $P^{SY}_k$, we must formally solve the scattering problem $(\Delta_g + s(n - s))u = 0$ for $g$ the singular Yamabe metric. The straightforward approach to this
entails computing $u$ from $\bar{g}$, then computing the scattering operator of $g = u^{-2}\bar{g}$, and finally formally solving that equation. This approach is extremely tedious, and we will pursue
a different one. Let $\hat{r}$ be a geodesic defining function with respect to $g = u^{-2}\bar{g}$, of the kind constructed in Lemma \ref{geodlem}, and such that
$\frac{\partial \hat{r}}{\partial r}|_{M} = 1$. Then for $\hat{\bar{g}} = \hat{r}^2g$, which is a
$C^n$ compact metric on $X$, the singular Yamabe function $\hat{u}$ is equal to $\hat{r}$, the $\hat{\bar{g}}$-distance to $M$. This drastically simplifies the computation of
$\hat{\syQ}$ and $\hat{P}^{SY}_k$. Morever, because (within a conformal class) both depend only on the boundary representative, and because $\hat{\bar{g}}|_{TM} = \bar{g}|_{TM}$ by construction,
in fact we have in this way computed $\syQ$ and $P_k^{SY}$. All that remains is to express the computed quantity in terms of invariants of our original metric $\bar{g}$, and for that task, the following
three lemmas are the tools we will need.

The first is completely standard, and is included here only for convenience. The proof is left as an exercise.
\begin{lemma}
	\label{confchangelem}
	Suppose $\bar{g}$ is a smooth metric on $X^{n + 1}$, and that $\tilde{\bar{g}} = e^{2\omega}\bar{g}$. Let $k = \bar{g}|_{TM}$. Then extrinsic invariants at the boundary $M$ transform as follows:
	\begin{align*}
		\widetilde{H} &= e^{-\omega}(H - n\omega_r)\\
		\mathring{\widetilde{L}}_{\mu\nu} &= e^{\omega}\mathring{L}_{\mu\nu}\\
		\widetilde{\overline{R}}_{\mu\nu} &= \overline{R}_{\mu\nu} - (n - 1)\overline{\nabla}^{2}_{\mu\nu}\omega + (n - 1)\omega_{\mu}\omega_{\nu} - (\Delta_{\bar{g}}\omega - (n - 1)|d\omega|_{\bar{g}}^2)\bar{g}_{\mu\nu}\\
		\widetilde{R}_{\mu\nu} &= R_{\mu\nu} - (n - 2)\nabla^2_{\mu\nu}\omega + (n - 2)\omega_{\mu}\omega_{\nu} - (\Delta_k\omega + (n - 2)|d\omega|_k^{2})k_{\mu\nu}\\
		\widetilde{\overline{R}} &= e^{-2\omega}(\overline{R} - 2n\Delta_{\bar{g}}\omega - n(n - 1)|d\omega|_{\bar{g}}^2)\\
		\widetilde{R} &= e^{-2\omega}(R - 2(n - 1)\Delta_k\omega - (n - 1)(n - 2)|d\omega|_k^2).
	\end{align*}
	Here $\overline{R}_{\mu\nu}$ and $\overline{R}$ are the Ricci and scalar curvatures for $\bar{g}$, and $R_{\mu\nu}$ and $R$ are the Ricci and scalar curvatures for $k$; and similarly for 
	$\widetilde{\overline{R}}$, etc. Moreover, $|d\omega|_{\bar{g}}^2$ is the squared $\bar{g}$-norm of $d\omega$, where $d$ is the exterior derivative on $X$; while $|d\omega|_k^2$ is the squared $k$-norm of
	$d(\omega|_M)$, where $d$ is the exterior derivative on $M$. Finally, $L$ is the second fundamental form of $M$ with respect to the inward-pointing normal $\frac{\partial}{\partial r}$, and
	$H = k^{\mu\nu}L_{\mu\nu}$.
\end{lemma}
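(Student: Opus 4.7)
The four curvature identities (for $\widetilde{\overline{R}}_{\mu\nu}$, $\widetilde{R}_{\mu\nu}$, $\widetilde{\overline{R}}$, $\widetilde{R}$) are instances of the classical conformal transformation laws applied to $\bar{g}$ on the $(n+1)$-manifold $X$ and to $k$ on the $n$-manifold $M$. My plan is to first derive the conformal change of the Christoffel symbols,
\begin{equation*}
\widetilde{\Gamma}^k_{ij} - \Gamma^k_{ij} = \delta^k_i\omega_j + \delta^k_j\omega_i - \bar{g}_{ij}\bar{g}^{kl}\omega_l,
\end{equation*}
and then propagate this through the definition of the Riemann tensor to obtain the conformal change of the Ricci tensor in dimension $m$. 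Specializing to $m = n+1$ (for $\bar{g}$ on $X$) and $m = n$ (for $k$ on $M$) yields the two Ricci identities as stated, with the factors $n-1$ and $n-2$ arising as $m - 2$ in the two cases. The scalar curvature identities then follow by tracing with the new inverse metric.

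For the extrinsic transformation laws, the key observation is that $\partial_r$, the $\bar{g}$-inward unit normal to $M$, has $\tilde{\bar{g}}$-length $e^{\omega}$, so the $\tilde{\bar{g}}$-inward unit normal is $\widetilde{N} = e^{-\omega}\partial_r$. Starting from $\widetilde{L}_{\mu\nu} = -\tilde{\bar{g}}(\widetilde{\nabla}_{\partial_\mu}\widetilde{N}, \partial_\nu)$ and applying the Christoffel change above together with the Fermi-coordinate identity $\nabla_{\partial_\mu}\partial_r = -L^{\nu}_\mu \partial_\nu$ along $M$, a short computation gives
\begin{equation*}
\widetilde{L}_{\mu\nu} = e^{\omega}(L_{\mu\nu} - \omega_r k_{\mu\nu}).
\end{equation*}
Tracing with $\tilde{k}^{\mu\nu} = e^{-2\omega}k^{\mu\nu}$ produces $\widetilde{H} = e^{-\omega}(H - n\omega_r)$, and subtracting the pure-trace part shows that the $\omega_r k_{\mu\nu}$ contribution cancels exactly, yielding $\widetilde{\mathring{L}}_{\mu\nu} = e^{\omega}\mathring{L}_{\mu\nu}$.

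The only subtlety to watch is that $r$ is the $\bar{g}$-distance to $M$, not the $\tilde{\bar{g}}$-distance, so $\tilde{\bar{g}}$ is not in geodesic normal form with respect to the coordinate $r$. This does not affect the argument, however, because all six identities are pointwise along $M$ and $\partial_r$ remains normal to $M$ with respect to both metrics (differing only in length). I do not foresee a genuine obstacle: the entire argument amounts to bookkeeping of conformal weights combined with standard identities, which is presumably why the authors leave the result as an exercise.
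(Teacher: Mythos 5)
Your proposal is correct and follows the standard derivation that the paper implicitly has in mind by leaving the proof as an exercise: compute the conformal change of the Christoffel symbols, propagate through Riemann to Ricci, and trace for the scalar curvature; for the extrinsic quantities, compute $\widetilde{L}_{\mu\nu}=e^{\omega}(L_{\mu\nu}-\omega_{r}k_{\mu\nu})$ directly from the rescaled unit normal $e^{-\omega}\partial_{r}$ and the Christoffel change, then trace and remove the trace. Your observation that $r$ need not be the $\tilde{\bar{g}}$-distance is the right one, and the cancellation of the $\omega_{r}k_{\mu\nu}$ term in $\mathring{\widetilde{L}}$ is exactly as you describe.
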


\begin{lemma}
	\label{uexplem}
	Let $\bar{g}$ be a smooth metric on $X^{n + 1}$, and let $u$ be the singular Yamabe solution for $\bar{g}$, so that $g = u^{-2}\bar{g}$ has constant scalar curvature $-n(n + 1)$.
	Then $u = r\tilde{u}$, where $\tilde{u} = 1 + O(r)$. Moreover,
	\begin{align*}
		\partial_r\tu|_{r = 0} &= -\frac{1}{2n}H\text{ and}\\
		\partial_r^2\tu|_{r = 0} &= -\frac{1}{3n}(\overline{R} + H^2) + \frac{1}{3(n - 1)}(R - |\mathring{L}|_k^2).
	\end{align*}
\end{lemma}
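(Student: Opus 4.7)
The plan is to substitute $u = r\tilde u$ directly into the singular Yamabe equation (\ref{usyeq}) and match coefficients of powers of $r$ near $M$. Writing $\tilde u = 1 + ar + br^2 + O(r^3)$ with $a, b\in C^\infty(M)$, the two quantities to identify are $a = \partial_r\tilde u|_{r=0}$ and $2b = \partial_r^2\tilde u|_{r=0}$. I will work through order $r^2$, using the Fermi decomposition $\bar g = dr^2 + h_r$ and the expansion of $h_r$ from Lemma \ref{fermilem}.

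First I would expand each piece of $n(n+1) = n(n+1)|du|_{\bar g}^2 - 2nu\Delta_{\bar g}u - u^2\overline R$ separately. Since $\tilde u|_M \equiv 1$, the tangential derivatives $\partial_\mu \tilde u$ vanish at $r=0$, so $h^{\mu\nu}\partial_\mu u\,\partial_\nu u = O(r^4)$ and $|du|_{\bar g}^2 = (\partial_r u)^2 + O(r^4)$. For the Laplacian,
\begin{equation*}
\Delta_{\bar g} u = \partial_r^2 u + \tfrac{1}{2}h^{\mu\nu}h'_{\mu\nu}\partial_r u + \Delta_{h_r} u,
\end{equation*}
where $\Delta_{h_r}u = r\Delta_{h_r}\tilde u = O(r^2)$ again because $\tilde u|_M\equiv 1$. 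Lemma \ref{fermilem} gives $h'_{\mu\nu}|_{r=0} = -2L_{\mu\nu}$ and $\tfrac{1}{2}h''_{\mu\nu}|_{r=0} = L_\mu{}^\sigma L_{\sigma\nu} - \overline R_{0\mu\nu 0}$; combined with the dual expansion $h^{\mu\nu} = k^{\mu\nu} + 2rL^{\mu\nu} + O(r^2)$ this yields $\tfrac{1}{2}h^{\mu\nu}h'_{\mu\nu} = -H - r(|L|_k^2 + \alpha) + O(r^2)$, where $\alpha := k^{\mu\nu}\overline R_{0\mu\nu 0}$. Finally $u^2\overline R = r^2\overline R|_M + O(r^3)$.

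Assembling, the order-$r^1$ balance reduces to $4n^2 a + 2nH = 0$, giving $a = -H/(2n)$ and the first formula. The order-$r^2$ balance, after substituting this value of $a$, produces
\begin{equation*}
6bn(n-1) = 2H^2 - 2n|L|_k^2 - 2n\alpha + \overline R\big|_M.
\end{equation*}
To convert $\alpha$ to intrinsic data, I invoke the twice-contracted Gauss equation $R = \overline R - 2\overline R_{00} + H^2 - |L|_k^2$, which (using the Riccati identity $\partial_r L_{\mu\nu} = L_\mu{}^\sigma L_{\sigma\nu} - \overline R_{0\mu\nu 0}$ to pin down the sign of $\alpha$ as $\overline R_{00}$) lets me eliminate $\overline R_{00}$ from the right-hand side. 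After splitting $|L|_k^2 = |\mathring L|_k^2 + H^2/n$ and dividing by $6n(n-1)$, the expression rearranges into $-\tfrac{1}{3n}(\overline R + H^2) + \tfrac{1}{3(n-1)}(R - |\mathring L|_k^2)$, as claimed.

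The conceptual content is minimal; the only real obstacle is careful bookkeeping of the three order-$r^2$ contributions and the recognition that the combination appearing in front of $b$ must be rewritten via Gauss to isolate the announced normalization of bulk and boundary scalar curvatures against $H^2$ and $|\mathring L|_k^2$. Sign conventions for $\overline R_{0\mu\nu 0}$ deserve an explicit check against the stated curvature convention $R_{ij} = R^k{}_{ijk}$.
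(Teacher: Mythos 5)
Your computation is correct and is the natural one — substitute $u = r\tilde u$ into (\ref{usyeq}), use the Fermi expansion of $h_r$, and match orders in $r$, then convert $\overline R_{00}$ to intrinsic data via the twice-contracted Gauss equation. The paper does not give its own proof of this lemma but defers to \cite{g17} (equation (2.6) and pp. 1788--89), which proceeds by exactly this method. I verified your order-$r^1$ equation $4n^2 a + 2nH = 0$, your order-$r^2$ equation $6n(n-1)b = 2H^2 - 2n|L|_k^2 - 2n\overline R_{00} + \overline R$, and the final simplification; all are correct, and the identification $k^{\mu\nu}\overline R_{0\mu\nu 0}|_{r=0} = \overline R_{00}$ does hold under the paper's convention $R_{ij} = R^k{}_{ijk}$ (via the pair-swap symmetry $\overline R_{\mu 00\nu} = \overline R_{0\nu\mu 0}$).
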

This is proved in \cite{g17}. See in particular equation (2.6) and pages 1788-89.

\begin{lemma}
	\label{omegalem}
	Let $\bar{g}$ be a smooth metric on $X^{n + 1}$, and let $g$ be the corresponding singular Yamabe metric. Let $\hat{r}$ be the geodesic defining function for $M$ (with respect to $g$), as in Lemma
	\ref{geodlem}, such that $\frac{\partial \hat{r}}{\partial r}|_{M} = 1$. Then if $\hat{\bar{g}} = r^2g$ is the corresponding geodesic compactification of $g$, we can write
	$\hat{\bar{g}} = e^{2\omega}\bar{g}$, where
	\begin{align*}
		\omega|_{M} &= 0\\
		\partial_r\omega|_{M} &= \frac{1}{n}H\\
		\partial_r^2\omega|_M &= \frac{1 + n}{2n^2}H^2 + \frac{1}{2n}\overline{R} - \frac{1}{2(n - 1)}R + \frac{1}{2(n - 1)}|\mathring{L}|_k^2\\
		\partial_r^3\omega|_M &= -\frac{1}{n}\Delta_kH + \frac{1}{n - 2}\nabla^{\mu}\nabla^{\nu}\mathring{L}_{\mu\nu} + \frac{1}{n - 2}\mathring{L}^{\mu\nu}\overline{R}_{\mu\nu} - \frac{2}{n - 2}
		\mathring{L}^{\mu\nu}R_{\mu\nu}\\
		&\quad+\frac{1}{2n}\partial_r\overline{R} + \frac{n^2 + 2n + 1}{2n^3}H^3 + \frac{n + 1}{2n^2}H\overline{R} - \frac{n + 2}{2n(n - 1)}HR\\
		&\quad+ \frac{3n^2 - 4n - 2}{2n(n - 1)(n - 20)}H|\mathring{L}|_k^2.
	\end{align*}
\end{lemma}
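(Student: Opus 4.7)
The strategy is to derive a single first-order PDE for $\omega$ from the geodesic-defining-function condition, and then solve it iteratively at $r=0$.

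From $\hat{\bar{g}} = \hat{r}^2 g = (\hat{r}/u)^2\bar{g} = e^{2\omega}\bar{g}$ we obtain $\hat{r} = u\,e^{\omega}$. The condition $|d\hat{r}|^2_{\hat{\bar{g}}}=1$ translates to $|d\hat{r}|^2_{\bar{g}} = e^{2\omega}$; substituting $\hat{r} = ue^{\omega}$ and dividing through by $e^{2\omega}$ yields
\begin{equation*}
|du + u\,d\omega|^2_{\bar{g}} = 1.
\end{equation*}
Expanding the square and using (\ref{usyeq}) in the form $|du|^2_{\bar{g}} = 1 + \tfrac{2u\Delta_{\bar{g}} u}{n+1} + \tfrac{u^2\overline{R}}{n(n+1)}$ to cancel the constants, this is equivalent to
\begin{equation*}
\frac{2\Delta_{\bar{g}} u}{n+1} + \frac{u\overline{R}}{n(n+1)} + 2\langle du, d\omega\rangle_{\bar{g}} + u|d\omega|^2_{\bar{g}} = 0.
\end{equation*}
Since $u|_M = 0$ and the normalizations give $\partial_r\hat{r}|_M = 1 = \partial_r u|_M$, the boundary value $\omega|_M = 0$ follows.

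Next I would work in Fermi coordinates for $\bar{g}$, write $\omega = \omega_1 r + \tfrac12\omega_2 r^2 + \tfrac16\omega_3 r^3 + \cdots$ with $\omega_j\in C^{\infty}(M)$, and use the standard identity $\Delta_{\bar{g}}f = \partial_r^2 f + \tfrac12 h^{\alpha\beta}\partial_r h_{\alpha\beta}\,\partial_r f + \Delta_{h_r} f$, feeding in $u = r\tilde u$ from Lemma \ref{uexplem} and the expansion of $h_r$ from Lemma \ref{fermilem}. Evaluating the PDE at $r=0$ gives $\omega_1 = H/n$ at once (using $\Delta_{\bar{g}} u|_M = -(n+1)H/n$). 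Differentiating the PDE once (respectively twice) in $r$ and restricting to $r=0$ produces linear equations for $\omega_2$ and $\omega_3$ whose right-hand sides are expressible in terms of geometric quantities on $M$ and the previously determined $\omega_j$.

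The main obstacle is the bookkeeping for $\omega_3$. Here one needs: $\partial_r^3\tilde u|_M$, which is not supplied by Lemma \ref{uexplem} but is obtained by differentiating (\ref{usyeq}) one further time in $r$; the quantity $\partial_r^2(\Delta_{\bar g}u)|_M$, which via equation (\ref{lapderiv}) generates the tangential terms $\Delta_k H$, $\nabla^\mu\nabla^\nu\mathring{L}_{\mu\nu}$, $\mathring{L}^{\mu\nu}\overline{R}_{\mu\nu}$, and $\mathring{L}^{\mu\nu}R_{\mu\nu}$; the quadratic Fermi expansion of $h_r$ in Lemma \ref{fermilem}, which accounts for the curvature contractions and the $\mathring{L}^2$-type quantities; the contribution $\partial_r\overline{R}|_M$ arising from differentiating the $u\overline{R}$ term twice; and cross terms such as $\omega_1\omega_2$ and $\omega_1^3$ produced by the nonlinearities $\langle du,d\omega\rangle_{\bar g}$ and $u|d\omega|^2_{\bar g}$. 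Collecting these contributions, using the Gauss and contracted Codazzi identities to convert mixed tangential/normal curvature contractions into the forms $\overline{R}_{\mu\nu}, R_{\mu\nu}, \nabla^\mu\nabla^\nu\mathring{L}_{\mu\nu}$ appearing in the statement, and simplifying, yields the stated formula for $\partial_r^3\omega|_M$.
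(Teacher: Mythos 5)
Your proposal is correct and follows essentially the same route as the paper: both start from the eikonal condition $|d\hat r|^2_{\hat{\bar g}}=1$, substitute $\hat r = u e^{\omega}$ to obtain a first-order noncharacteristic PDE for $\omega$, and then differentiate that PDE iteratively in $r$ and evaluate at $r=0$, feeding in Lemma \ref{uexplem}, Lemma \ref{fermilem}, and the Gauss/Codazzi identities. Your extra step of substituting the singular Yamabe equation (\ref{usyeq}) to replace $1-|du|^2_{\bar g}$ by $-\tfrac{2u\Delta_{\bar g}u}{n+1}-\tfrac{u^2\overline R}{n(n+1)}$ is an equivalent algebraic rewrite of the same PDE (it trades the need for $\partial_r^3\tilde u|_M$ appearing explicitly for the appearance of $\partial_r^2(\Delta_{\bar g}u)|_M$, and in either form one must differentiate (\ref{usyeq}) one extra time to close the computation, as you note).
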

\begin{proof}
	Observe that the singular Yamabe function $u = r\tu$ is a defining function for $M$. Thus, taking $r_0 = u$ in equation (2.2) of (\cite{g99}) gives $\hat{\bar{g}} = e^{2\omega}\bar{g}$ where
	\begin{equation*}
		2(\grad_{\bar{g}}u)(\omega) + u|d\omega|_{\bar{g}}^2 = \frac{1 - |du|_{\bar{g}}^2}{u}.
	\end{equation*}
	We can write this equation as
	\begin{equation*}
		2r^2\tu\bar{g}^{ij}\partial_i\tu\partial_j\omega + 2r\tu^2\partial_r\omega + r^2\tu^2\bar{g}^{ij}\partial_i\omega\partial_j\omega = 1 - \tu^2 - 2r\tu\partial\tu - r^2\bar{g}^{ij}\partial_i\tu\partial_j\tu.
	\end{equation*}
	Now, tangential derivatives of both $\tu$ and $\omega$ vanish to first order, so we can rewrite this as
	\begin{equation*}
		2r^2\tu\partial_r\tu\partial_r\omega + 2r\tu^2\partial_r\omega + r^2\tu^2\partial_r(\omega)^2 = 1 - \tu^2 - 2r\tu\partial_r\tu - r^2\partial_r(\tu)^2 + O(r^4).
	\end{equation*}
	Differentiating gives
	\begin{equation*}
		\begin{split}
			8r\tu\partial_r(\tu)\partial_r(\omega) +&2r^2\partial_r(\tu)^2 + 2r^2\tu\partial_r^2(\tu)\partial_r(\omega) + 2r^2\tu\partial_r(\tu)\partial_r^2(\omega)\\
			&+2\tu^2\partial_r(\omega) + 2r\tu^2\partial_r^2(\omega) + 2r\tu^2\partial_r(\omega)^2 + 2r^2\tu\partial_r(\tu)\partial_r(\omega)^2\\
			&+2r^2\tu^2\partial_r(\omega)\partial_r^2(\omega) = -4\tu\partial_r(\tu) - 4r\partial(\tu)^2 - 2r\tu\partial_r^2(\tu)\\
			&-2r^2\partial_r(\tu)\partial_r^2(\tu) + O(r^3).
		\end{split}
	\end{equation*}
	Taking $r = 0$ and applying Lemma \ref{uexplem} gives
	\begin{equation*}
		\partial_r\omega|_{r = 0} = \frac{1}{n}H.
	\end{equation*}
	Differentiating again mod $O(r^2)$, we find
	\begin{equation*}
	\begin{split}
		12\tu\partial_r(\tu)\partial_r(\omega) +& 12r\partial_r(\tu)^2\partial_r(\omega) + 12r\tu\partial_r^2(\tu)\partial_r(\omega) + 12r\tu\partial_r(\tu)\partial_r^2(\omega)\\
		&+ 4\tu\partial_r(\tu)\partial_r(\omega) + 4\tu^2\partial_r^{2}(\omega) + 4r\tu\partial_r(\tu)\partial_r^2(\omega)\\
		&+ 2r\tu^2\partial_r^3(\omega) + 2\tu^2\partial_r(\omega)^2 + 8r\tu\partial_r(\tu)\partial_r(\omega)^2\\
		&+ 8r\tu^2\partial_r(\omega)\partial_r^2(\omega) = -8\partial_r(\tu)^2 - 6\tu\partial_r^2\tu - 14\partial_r(\tu)\partial_r^2(\tu)\\
		&\quad- 2r\tu\partial_r^3(\tu) + O(r^2).
	\end{split}
	\end{equation*}
	and setting $r = 0$ and using Lemma \ref{uexplem} gives
	\begin{equation*}
		\partial_r^2\omega|_{r = 0} = \frac{1-n}{2n}H^2 + \frac{1}{2n}\overline{R} - \frac{1}{2(n - 1)}R + \frac{1}{2(n - 1)}|\mathring{L}|_k^2.
	\end{equation*}

	Finally, we differentiate again, mod $O(r)$:
	\begin{equation*}
		\begin{split}
			24\partial_r(\tu)^2\partial_r(\omega) +& 24\tu\partial_r^2(\tu)\partial_r(\omega) + 36\tu\partial_r(\tu)\partial_r^2(\omega) + 6\tu^2\partial_r^3(\omega)\\
			&+12\tu\partial_r(\tu)\partial_r(\omega)^2 + 12\tu^2\partial_r(\omega)\partial_r^2(\omega) = -30\partial_r(\tu)\partial_r^2(\tu)\\
			&-6\partial_r(\tu)\partial_r^2(\tu)-8\tu\partial_r^2(\tu) + O(r).
		\end{split}
	\end{equation*}
	Taking $r = 0$ and using our prior results gives the claimed formula for $\partial_r^3\omega|_{M}$.
\end{proof}

We also record some additional elementary formulas. First, it follows from Gauss's equation that
\begin{equation}
	\label{LRconteq}
	\mathring{L}^{\mu\nu}\overline{R}_{0\mu\nu 0} = \mathring{L}^{\mu\nu}\overline{R}_{\mu\nu} - \mathring{L}^{\mu\nu}R_{\mu\nu} + \frac{n - 2}{n}H|\mathring{L}|_k^2 - \mathring{L}^3,
\end{equation}
where $\mathring{L}^3 = \mathring{L}_{\alpha}^{\beta}\mathring{L}_{\beta}^{\mu}\mathring{L}_{\mu}^{\alpha}$.
Next, it is easy to show using Codazzi's equation that
\begin{equation}
	\label{codazeq}
	\begin{split}
		\overline{\nabla}_r\overline{R}_{00}|_{r = 0} =& \frac{1}{2}\partial_r\overline{R} + \frac{1 - n}{n}\Delta_kH + \nabla^{\mu}\nabla^{\nu}\mathring{L}_{\mu\nu} - \mathring{L}^{\mu\nu}\overline{R}_{\mu\nu}\\
		&\quad+\frac{n - 1}{2n}H\overline{R} - \frac{1 + n}{2n}HR - \frac{1 + n}{2n}H|\mathring{L}|_k^2 + \frac{n^2 - 1}{2n^2}H^3.
  	\end{split}
\end{equation}
Here, $\nabla$ is the Levi-Civita connection of $k$.

We now begin to analyze the singular Yamabe solution.

As mentioned above, we let $g$ be the singular Yamabe metric corresponding to $\bar{g}$ on $X^{n + 1}$, and let $\hat{r}$ be the geodesic defining function corresponding to $k$, with $\hat{\bar{g}} = \hat{r}^2g$.
That is, $g = \frac{d\hr^2 + \hat{h}_{\hr}}{\hr^2}$, where $\hat{h}_{\hr}$ is a one-parameter family of smooth metric on $M$ with $\hat{h}_0 = k$.
By Lemma \ref{geodlem}, $\hr$ is $C^{n}$, and thus so is $\hat{\bar{g}}$. Also $r\hat{\bar{g}}$ is $C^{n + 1}$. Now, with respect to $\hat{\bar{g}}$, the singular Yamabe function $\hat{u}$ one obtains by starting with
$\hat{\overline{g}}$ is $\hat{r}$ itself; this follows because
$g = \hat{r}^{-2}\hat{\bar{g}}$, and the singular Yamabe function is unique.

We introduce some further notations related to $\hat{\bar{g}}$. The Ricci and scalar curvatures we will denote by $\overline{R}_{ij}$ and $\overline{R}$, respectively. The second fundamental form with respect to
$\hat{\bar{g}}$ we will denote, for convenience, by $\hat{L}$. The tracefree part of this we will call $\mathring{L}$, since it is a conformal invariant at the boundary and thus identical to the corresponding tensor for $\bar{g}$,
since $\bar{g}|_{TM} = \hat{\bar{g}}|_{TM} = k$. We use $\hat{H}$ for the mean curvature $k^{\mu\nu}\hat{L}_{\mu\nu}$.
Meanwhile, we will continue to use $R$ to denote the intrinsic curvature $R_k$ of the boundary metric $k$.

Now, the equation satisfied by $\hat{u}$, i.e. the singular Yamabe equation, is given by
\begin{equation*}
	n(n + 1) = n(n + 1)|du|_{\hat{\bar{g}}}^2 - 2n\hat{u}\Delta_{\hat{\bar{g}}}\hat{u} - \hat{u}^2\hat{\overline{R}}.
\end{equation*}
(See (\ref{usyeq}).) Since $\hat{u} = \hat{r}$, we have
\begin{equation*}
	0 = 2n\Delta_{\hat{\bar{g}}}\hat{r} + \hat{r}\hat{\overline{R}}.
\end{equation*}
Since $\Delta_{\hat{\bar{g}}}\hr = \frac{1}{2}\hat{h}^{\mu\nu}\hat{h}'_{\mu\nu}$, this can be re-expressed as
\begin{equation*}
	\hat{h}^{\mu\nu}\hat{h}'_{\mu\nu} = -\frac{1}{n}\hat{r}\hat{\overline{R}}.
\end{equation*}
Taking $\hr = 0$ and applying Lemma \ref{fermilem} immediately gives 
\begin{equation}
	\label{n3geo1eq}
	\hat{H} = 0.
\end{equation}
This implies also that $\hat{L} = \mathring{L}$.
We differentiate, using the identity $(h^{\mu\nu})' = -h^{\mu\alpha}h^{\nu\beta}h'_{\alpha\beta}$, to find
\begin{equation}
	\label{twicediffgeod}
	-\hh^{\mu\alpha}\hh^{\nu\beta}\hh'_{\alpha\beta}\hh'_{\mu\nu} + \hh^{\mu\nu}\hh''_{\mu\nu} = -\frac{1}{n}\hat{\overline{R}} - \frac{1}{n}r\partial_{\hr}\hat{\overline{R}}.
\end{equation}
Taking $\hr = 0$ and again using Lemma \ref{fermilem} gives
\begin{equation}
	\label{firsdiff}
	-4|\hat{L}|^2_k - 2\hat{\overline{R}}_{00} + 2|\mathring{L}|_k^2 = -\frac{1}{n}\hat{\overline{R}}.
\end{equation}
(Recall that $\hat{L}$ is the second fundamental form.) Now, it is easy to show, for any metric $\bar{g}$ on an (n + 1)-dimensional manifold, that
\begin{equation*}
	|\hat{L}|_k^2 = |\mathring{L}|_k^2 + \frac{1}{n}\hat{H}^2.
\end{equation*}
Similarly, we have by Gauss's formula that
\begin{equation*}
	\hat{\overline{R}}_{00} = \frac{1}{2}\left( \hat{\overline{R}} - R - |\mathring{L}|_k^2 + \frac{n - 1}{n}\hat{H}^2 \right)
\end{equation*}
(see equation (4.3) in \cite{g17}, keeping in mind that that paper has a different convention for curvature indices).
Applying these equations to $\hat{\bar{g}}$, and recalling that $\hat{H} = 0$, we conclude from (\ref{firsdiff}) that at $r = 0$,
\begin{equation}
	\label{n3geo2eq}
	\hat{\overline{R}} = \frac{n}{n - 1}(R - |\mathring{L}|_k^2).
\end{equation}
Note that this result is good for $n \geq 2$, by Lemma \ref{geodlem}.
We differentiate (\ref{twicediffgeod}) again, which we can do if $n \geq 3$ according to Lemma \ref{geodlem}. We find
\begin{equation*}
	2\hh^{\mu\sigma}\hh^{\alpha\lambda}\hh'_{\sigma\lambda}\hh'_{\alpha\beta}\hh'_{\mu\nu} - 3\hh^{\mu\alpha}\hh^{\nu\beta}\hh'_{\alpha\beta}\hh_{\mu\nu}'' + \hh^{\mu\nu}\hh'''_{\mu\nu}
	= -\frac{2}{n}\partial_{\hat{r}}\hat{\overline{R}} - \frac{1}{n}\hat{r}\partial_{\hat{r}}^2\hat{\overline{R}}.
\end{equation*}
Taking $\hr = 0$ and applying Lemma \ref{fermilem}, we get
\begin{equation*}
	-4\hat{L}^3 - 4\hat{L}^{\mu\nu}\hat{\overline{R}}_{0\mu\nu 0}-2\hat{\overline{\nabla}}_0\hat{\overline{R}}_{00} = -\frac{2}{n}\partial_{\hr}\hat{\overline{R}}.
\end{equation*}
Now using our previous calculations, (\ref{LRconteq}) and (\ref{codazeq}), we find
\begin{equation}
	\label{n3geo3eq}
	\partial_{\hr}\hat{\overline{R}}|_{\hr = 0} = -\frac{2n}{n - 2}\nabla^{\mu}\nabla^{\nu}\mathring{L}_{\mu\nu} + \frac{4n}{n - 2}\mathring{L}^{\mu\nu}R_{\mu\nu} - \frac{2n}{n - 2}
	\mathring{L}^{\mu\nu}\hat{\overline{R}}_{\mu\nu}.
\end{equation}
Here, $\nabla$ and $R$ are the connection and Ricci curvature, respectively, of $k$. We leave hats off $\nabla, R$, and $\mathring{L}$ because these are the same for $\hat{\bar{g}}$ as for $\bar{g}$.

Having derived the geometric consequences of using a geodesic compactification, we next compute $\Delta_g$ for any $v \in C^{\infty}(X)$, where (recall) $g$ is the singular Yamabe metric. We find
\begin{align*}
	\Delta_gv &= (\det g)^{-1/2}\partial_i\left[ (\det g)^{1/2}g^{ij}\partial_jv \right]\\
	&= \hr^{1 + n}(\det \hat{\bar{g}})^{-1/2}\partial_i\left[ \hr^{1 - n}(\det \hat{\bar{g}})^{1/2}\hat{\bar{g}}^{ij}\partial_jv \right]\\
	&= \hr^2\partial_{\hr}^2v + (1 - n) \hr\partial_{\hr}v + \frac{1}{2}\hr^2\hh^{\mu\nu}\hh'_{\mu\nu}\partial_{\hr}v + \hr^2\Delta_{\hh_{\hr}}v.
\end{align*}
Now if $f \in C^{\infty}(M)$ and $v = \hr^{n - s}f$, we therefore find
\begin{equation}
	\label{baselinedelta}
	\Delta_gv = s(s - n)\hr^{n - s}f + \frac{n - s}{2}\hr^{n - s + 1}\hh^{\mu\nu}\hh'_{\mu\nu}f + \hr^{n - s + 2}\Delta_{\hh_{\hr}}f.
\end{equation}
We expand the various quantities that appear in this expression. First, a straightforward computation shows that
\begin{equation}
	\label{Deltaeq}
	\Delta_{\hh_{\hr}}f = \Delta_kf + 2\hr\left[ \mathring{L}^{\mu\nu}\nabla^2_{\mu\nu}f + \nabla^{\mu}\mathring{L}_{\mu}^{\nu}f_{\nu} \right] + O(\hr^2).
\end{equation}
Meanwhile, by iteratively applying the equation $(\hh^{\mu\nu})' = -\hh^{\mu\alpha}\hh^{\nu\beta}\hh'_{\alpha\beta}$ along with Lemma \ref{fermilem} and equations (\ref{n3geo1eq}) - (\ref{n3geo3eq}), we find
\begin{equation*}
	\hh^{\mu\nu}\hh'_{\mu\nu} = \frac{1}{n - 1}\hr(|\mathring{L}|_k^2 - R) + \frac{2}{n - 2}\hr^2(\nabla^{\mu}\nabla^{\nu}\mathring{L}_{\mu\nu} - 2\mathring{L}^{\mu\nu}R_{\mu\nu}
	+\mathring{L}^{\mu\nu}\hat{\overline{R}}_{\mu\nu}).
\end{equation*}
Thus, we conclude from (\ref{baselinedelta}) and (\ref{Deltaeq}) that
\begin{equation}
	\label{n3opeq}
	\begin{split}
		[\Delta_g + s(n - s)](\hr^{n - s}f) =& \hr^{n - s + 2}\left[ \frac{n - s}{2(n - 1)}(|\mathring{L}|_k^2 - R) + \Delta_kf \right]\\
		&+ \hr^{n  - s + 3}\left[ \frac{n - s}{n - 2}(\nabla^{\mu}\nabla^{\nu}\mathring{L}_{\mu\nu} - 2\mathring{L}^{\mu\nu}R_{\mu\nu} + \mathring{L}^{\mu\nu}\hat{\overline{R}}_{\mu\nu})\right.\\
		&+ \left.2\mathring{L}^{\mu\nu}\nabla^2_{\mu\nu}f + 2\nabla^{\mu}\mathring{L}_{\mu}^{\nu}f_{\nu}\right] + O(\hr^{n - s + 4}).
	\end{split}
\end{equation}

We now formally solve the equation $(\Delta_g + s(n - s))v = 0$. Let $f \in C^{\infty}(M)$ be arbitrary, and set $v_0 = r^{n - s}f$. We will perturb $v_0$ at increasing orders to formally solve the equation.
To do this, we compute the indicial operator $I_s^j:C^{\infty}(M) \to C^{\infty}(M)$, which we define by
\begin{equation*}
	I_s^j(\psi) = \hr^{-(n - s + j)}[\Delta_g + s(n - s)](r^{n - s + j}\psi)|_{\hr = 0}.
\end{equation*}
This operator tells us the effect of a perturbation of $v$ at order $\hr^{n - s + j}$ on $[\Delta_g + s(n - s)]v$.
It is easy to compute from (\ref{baselinedelta}) that
\begin{equation*}
	I_s^j(\psi) = j(n - 2s + j)\psi.
\end{equation*}
Now it follows from (\ref{n3opeq}) that $[\Delta_g + s(n - s)]v_0 = O(\hr^{n - s + 2})$, so we want to perturb at order $\hr^{n - s + 2}$.
Specifically, we wish to solve
\begin{equation*}
	I_s^2\psi_2 = -\left(\frac{n - s}{2(n - 1)}(|\mathring{L}|_k^2 - R)\right)f + \Delta_kf,
\end{equation*}
which gives
\begin{equation*}
	\psi_2 = \frac{1}{(n - 1)(n + 2 - 2s)}\frac{n - s}{4}(R - |\mathring{L}|_k^2)f - \frac{1}{2(n + 2 - 2s)}\Delta_kf.
\end{equation*}
We therefore set $v_2 = \hr^{n - s}(f + \hr^2\psi_2)$. (We skip $v_1$ in our numbering since there is no term of order $\hr^{n - s + 1}$ in (\ref{n3opeq}).)
It can easily be shown from (\ref{n3opeq}) that, apart from removing the order $\hr^{n - s + 2}$ term from $(\Delta_g + s(n - s))v_0$, adding this perturbation
to $v_0$ has no other effects before order $\hr^{n - s + 4}$. Thus, the next equation we wish to solve is
\begin{equation*}
	\begin{split}
		I_s^3\psi_3 =& \frac{s - n}{n - 2}(\nabla^{\mu}\nabla^{\nu}\mathring{L}_{\mu\nu} - 2\mathring{L}^{\mu\nu}R_{\mu\nu} + \mathring{L}^{\mu\nu}\hat{\overline{R}}_{\mu\nu})\\
		&-2\mathring{L}^{\mu\nu}\nabla^2_{\mu\nu}f - 2\nabla^{\mu}\mathring{L}_{\mu}^{\nu}f_{\nu}.
	\end{split}
\end{equation*}
Since $I_s^3 = 3(n + 3 - 2s)$, we obtain
\begin{equation}
	\label{psi3}
	\begin{split}
		\psi_3 =& \frac{n - s}{3(n - 2)(n + 3 - 2s)}(-\nabla^{\mu}\nabla^{\nu}\mathring{L}_{\mu\nu} + 2\mathring{L}^{\mu\nu}R_{\mu\nu} - \mathring{L}^{\mu\nu}\hat{\overline{R}}_{\mu\nu})f\\
		& - \frac{2}{3(n + 3 - 2s)}(\mathring{L}^{\mu\nu}\nabla^2_{\mu\nu}f + \nabla^{\mu}\mathring{L}_{\mu}^{\nu}f_{\nu}).
	\end{split}
\end{equation}
We set $v_3 = v_2 + r^{n-s+3}\psi_3$.
It then follows from Proposition 3.6 in \cite{gz03} and Proposition \ref{expandprop} that
\begin{align*}
	\hat{P}_1^{SY} &= 0\\
	\hat{P}_2^{SY} &= -\Delta_k + \frac{n - 2}{4(n - 1)}(R - |\mathring{L}|_k^2)\\
	\intertext{If $n = 2$,}
	\hat{Q}_2^{SY} &= \frac{1}{2}(R - |\mathring{L}|_{k}^2).
	\intertext{For $n \geq 3$,}
	\hat{P}_3^{SY} &= \mathring{L}^{\mu\nu}\nabla^2_{\mu\nu}f + \nabla^{\mu}\mathring{L}_{\mu}^{\nu}f_{\nu} + \frac{n - 3}{4(n - 2)}\left( \nabla^{\mu}\nabla^{\nu}\mathring{L}_{\mu\nu} - 
	2\mathring{L}^{\mu\nu}R_{\mu\nu} + \mathring{L}^{\mu\nu}\hat{\overline{R}}_{\mu\nu}\right)f.\\
	\intertext{And if $n = 3$,}
	\hat{Q}^{SY}_3 &= \frac{1}{2}\nabla^{\mu}\nabla^{\nu}\mathring{L}_{\mu\nu} - \mathring{L}^{\mu\nu}R_{\mu\nu} + \frac{1}{2}\mathring{L}^{\mu\nu}\hat{\overline{R}}_{\mu\nu}.
\end{align*}

Finally, we can use Lemma \ref{uexplem} to translate these results into formulas for our general metric $\bar{g}$. We obtain the following.
\begin{theorem}
	\label{explicthm}
	For any dimension,
	\begin{align*}
		P_2^{SY}f &= -\Delta_kf + \frac{n - 2}{4(n - 1)}(R_k - |\mathring{L}|_{k}^2)f \quad (n \geq 2)\\
		P_3^{SY}f &= \mathring{L}^{\mu\nu}\nabla^2_{\mu\nu}f\\
		&\quad+ \nabla^{\mu}\mathring{L}_{\mu}^{\nu}f_{\nu} + \frac{n - 3}{4(n - 2)}\left( \nabla^{\mu}\nabla^{\nu}\mathring{L}_{\mu\nu} - 
		2\mathring{L}^{\mu\nu}R_{\mu\nu} + \mathring{L}^{\mu\nu}\hat{\overline{R}}_{\mu\nu} + \frac{n - 1}{n}H|\mathring{L}|_k^2\right)f.
	\end{align*}
	If $n = 2$, then
	\begin{equation*}
		Q_2^{SY} = \frac{1}{2}(R_k - |\mathring{L}|_k^2).
	\end{equation*}
	For $n = 3$, we have the following.
	\begin{align*}
		\syQ_3 &=\nabla^{\mu}\nabla^{\nu}\mathring{L}_{\mu\nu} - 2\mathring{L}^{\mu\nu}R_{\mu\nu} + \mathring{L}^{\mu\nu}\overline{R}_{\mu\nu} + \frac{2}{3}H|\mathring{L}|_k^2.
	\end{align*}
	Observe that $P_2^{SY}$ is the conformal Laplacian plus an extrinsic pointwise conformal invariant.
\end{theorem}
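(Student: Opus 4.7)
The plan is to transform to a conformal representative of $\bar{g}$ for which the singular Yamabe solution coincides with the boundary distance, carry out the formal scattering expansion there, and then translate the result back. Concretely, let $\hr$ be the geodesic defining function of $g = u^{-2}\bar{g}$ from Lemma \ref{geodlem}, set $\hat{\bar{g}} = \hr^{2}g$, and write $\hat{\bar{g}} = e^{2\omega}\bar{g}$. By uniqueness of the singular Yamabe solution, $\hr$ is itself the singular Yamabe defining function for $\hat{\bar{g}}$, which dramatically simplifies the formal computation. Moreover, since $\hat{\bar{g}}|_{TM} = k = \bar{g}|_{TM}$, we have $\omega|_M = 0$, so the scaling factors in the conformal transformation rule of Theorem \ref{PSYthm} are trivial on $M$; hence $\hat{P}_q^{SY} = P_q^{SY}$ and $\hat{Q}_n^{SY} = Q_n^{SY}$ as operators and functions on $M$.

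I would then compute the hatted operators directly. Because $\hat{u} = \hr$, differentiating the singular Yamabe equation $2n\Delta_{\hat{\bar{g}}}\hr + \hr\hat{\overline{R}} = 0$ in $\hr$ yields successively at $M$: $\hat{H} = 0$, $\hat{\overline{R}}|_M = \frac{n}{n - 1}(R - |\mathring{L}|_k^2)$, and a formula for $\partial_{\hr}\hat{\overline{R}}|_M$ in terms of intrinsic data and $\mathring{L}$. Combined with the Fermi expansion of Lemma \ref{fermilem}, this yields a manageable expansion of $\Delta_g(\hr^{n - s}f)$ to the required order. Proceeding as in Theorem \ref{expandprop}, the indicial operator $I_s^{j}\psi = j(n - 2s + j)\psi$ allows one to solve iteratively for formal corrections $\psi_2, \psi_3$. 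From these, (\ref{gek}) produces $\hat{P}_2^{SY}$ and $\hat{P}_3^{SY}$, and setting $f \equiv 1$ at $s = n$ gives $\hat{Q}_2^{SY}$ (for $n = 2$) and $\hat{Q}_3^{SY}$ (for $n = 3$).

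Finally, I would translate the hatted expressions into $\bar{g}$-invariants. The quantities $k$, $R_k$, $\mathring{L}$, $|\mathring{L}|_k^2$, and $\nabla$ are unchanged by the conformal change since $\omega|_M = 0$, so the formulas for $P_2^{SY}$ and $Q_2^{SY}$ transfer verbatim. For $P_3^{SY}$ and $Q_3^{SY}$, the only non-invariant contribution is $\mathring{L}^{\mu\nu}\hat{\overline{R}}_{\mu\nu}$. Applying Lemma \ref{confchangelem} together with $\partial_r\omega|_M = H/n$ from Lemma \ref{omegalem}, one finds $\overline{\nabla}^2_{\mu\nu}\omega|_M = -\frac{H}{n}L_{\mu\nu}$, while all other terms in the Ricci conformal transformation either vanish on $M$ or contribute only a $k_{\mu\nu}$-multiple, which drops out under the $\mathring{L}^{\mu\nu}$ trace. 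What remains is precisely $\mathring{L}^{\mu\nu}\hat{\overline{R}}_{\mu\nu}|_M = \mathring{L}^{\mu\nu}\overline{R}_{\mu\nu}|_M + \frac{n - 1}{n}H|\mathring{L}|_k^2$, supplying the extra term in the stated formula for $P_3^{SY}$ (and, on specializing $n = 3$, the $\frac{2}{3}H|\mathring{L}|_k^2$ term in $Q_3^{SY}$). The main obstacle is the bookkeeping in this final step: tracking the sign convention for $L$ relative to the inward normal and the factor of $n - 1$ in the Ricci conformal change, and verifying that all nuisance terms either vanish on $M$ or become $\mathring{L}$-traceless after contraction. Everything else is a mechanical application of Theorem \ref{expandprop} in the simplifying representative $\hat{\bar{g}}$.
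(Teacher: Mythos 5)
Your proposal follows essentially the same route as the paper: pass to the geodesic compactification $\hat{\bar{g}} = \hr^2 g$, use $\hat{u} = \hr$ and repeated differentiation of the singular Yamabe equation to get $\hat{H} = 0$, $\hat{\overline{R}}|_M = \tfrac{n}{n-1}(R - |\mathring{L}|_k^2)$, and $\partial_{\hr}\hat{\overline{R}}|_M$, carry out the iterative indicial solve for $\psi_2,\psi_3$, and then convert $\mathring{L}^{\mu\nu}\hat{\overline{R}}_{\mu\nu}$ back to $\bar{g}$-quantities via the Ricci conformal transformation rule using $\omega|_M = 0$ and $\partial_r\omega|_M = H/n$, which is exactly what the paper does. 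Your computation $\overline{\nabla}^2_{\mu\nu}\omega|_M = -\tfrac{H}{n}L_{\mu\nu}$ (from $\Gamma^0_{\mu\nu}|_M = L_{\mu\nu}$), together with the observation that the $\bar{g}_{\mu\nu}$-proportional terms and the $\omega_\mu\omega_\nu$ terms drop under contraction with $\mathring{L}^{\mu\nu}$, correctly produces the extra $\tfrac{n-1}{n}H|\mathring{L}|_k^2$ term, matching the paper's use of Lemmas \ref{uexplem}--\ref{omegalem}.
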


We next wish to compute the coefficients in Theorem \ref{globthm1} in the cases $n = 2$ and $n = 3$ to express the renormalized volume in terms of the scattering operator. On the basis of (\ref{psi3}) we have
\begin{equation*}
	\begin{split}
		v_3 =& \hr^{n - s}f + \hr^{n - s + 2}\left[ \frac{n - s}{4(n - 1)(n + 2 - 2s)}(R - |\mathring{L}|_k^2)f - \frac{1}{2(n + 2 - 2s)}\Delta_kf \right]\\
	&+\hr^{n - s + 3}\left[ \frac{n-s}{3(n-2)(n + 3-2s)}(-\nabla^{\mu}\nabla^{\nu}\mathring{L}_{\mu\nu}+2\mathring{L}^{\mu\nu}R_{\mu\nu}-\mathring{L}^{\mu\nu}\hat{\overline{R}}_{\mu\nu})\right.\\
	&- \left. \frac{2}{3(n+3-2s)}(\mathring{L}^{\mu\nu}\nabla_{\mu\nu}^2f + \nabla^{\mu}\mathring{L}_{\mu}^{\nu}f_{\nu})\right].
	\end{split}
\end{equation*}
Recall that $r$ (as opposed to $\hr$) is the distance function from $M$ with respect to $\bar{g}$.
We wish to express $v_2$ in terms of $r$ instead of $\hr$, and for this purpose, we want to expand $\hr^{\alpha}$ for a real number $\alpha$. Let $\alpha \in \mathbb{R}$. Recall that
we defined $\tu$ by $u = r\tu$. Then
\begin{equation*}
	\hr = e^{\omega}u = re^{\omega}\tu,
\end{equation*}
where $\omega$ is as in Lemma \ref{omegalem}. Thus,
\begin{equation*}
	\hr^{\alpha} = r^{\alpha}e^{\alpha(\omega + \log\tu)}.
\end{equation*}
Now, $\tu = 1 + r\tu_r + \frac{1}{2}r^2\tu_{rr} + \cdots$, where we set $\tu_r = \partial_r\tu|_{r = 0}$, etc.
A standard calculation shows that
\begin{equation*}
	\log \tu = r\tu_r + \frac{1}{2}r^2(\tu_{rr} - \tu_r^2) + \frac{1}{6}r^3(\tu_{rrr} - 3\tu_r\tu_{rr} + 2\tu_r^3) + \cdots.
\end{equation*}
Therefore,
\begin{equation*}
	\omega + \log \tu = r(\tu_r + \omega_r) + \frac{1}{2}r^2(\tu_{rr} - \tu_r^2 + \omega_{rr}) + \frac{1}{6}r^3(\tu_{rrr} - 3\tu_r\tu_{rr} + 2\tu_r^3 + \omega_{rrr}) + \cdots.
\end{equation*}
We obtain
\begin{align*}
	\exp[\alpha(\omega + \log \tu)] &= 1 + r(\alpha \tu_r + \alpha \omega_r) + \\
	&\quad + r^2\left[ \frac{\alpha}{2}\tu_{rr} - \frac{\alpha}{2}\tu_r^2 + \frac{\alpha}{2}\omega_{rr} + \frac{\alpha^2}{2}\tu_r^2 + \alpha^2\tu_r\omega_r + \frac{\alpha^2}{2}\omega_r^2 \right]\\
	&\quad + r^3\left[ \frac{\alpha}{6}\tu_{rrr} - \frac{\alpha}{2}\tu_r\tu_{rr}+\frac{\alpha}{3}\tu_r^3 + \frac{\alpha}{6}\omega_{rrr}+\frac{\alpha^2}{2}\tu_r\tu_{rr}\right.\\
	&\quad -\frac{\alpha^2}{2}\tu_r^3 + \frac{\alpha^2}{2}\tu_r\omega_{rr} + \frac{\alpha^2}{2}\tu_{rr}\omega_r - \frac{\alpha^2}{2}\tu_r^2\omega_r + \frac{\alpha^2}{2}\omega_r\omega_{rr}\\
	&\quad \left. + \frac{\alpha^3}{6}\tu_r^3 + \frac{\alpha^3}{2}\tu_r^2\omega_r + \frac{\alpha^3}{6}\omega_r^3 \right] + O(r^4).
\end{align*}
Supposing now that $\alpha = n - s + j$ ($j \geq 0$), and using Lemmas \ref{uexplem} and \ref{omegalem}, a tedious calculation yields
\begin{align*}
	\hr^{n - s + j} &= r^{n - s + j} + r^{n - s + j + 1}\left( \frac{n - s + j}{2n}H \right)\\
	&\quad + r^{n - s + j + 2}\left[ \frac{5n^2 - 8sn + 8jn + 3n + 3s^2 - 6js - 3s + 3j^2 + 3j}{24n^2}H^2\right.\\
	&\quad + \left.\frac{n - s + j}{12n}\overline{R} - \frac{n - s + j}{12(n - 1)}R + \frac{n - s + j}{12(n - 1)}
	|\mathring{L}|_k^2\right]\\
	&\quad + r^{n - s + j + 3}\left[ \frac{s -n - j}{24n}\Delta_kH + \frac{n - s + j}{24(n - 2)}\nabla^{\mu}\nabla^{\nu}\mathring{L}_{\mu\nu} + \frac{n - s + j}{24(n - 2)}\mathring{L}^{\mu\nu}\overline{R}_{\mu\nu}\right.\\
	&\quad + \frac{s - n - j}{12(n - 2)}\mathring{L}^{\mu\nu}R_{\mu\nu} + \frac{n - s + j}{48n}\partial_r\overline{R}\numberthis\label{bigexpeq}\\
	&\quad + \frac{(n - s + j)(4n^2-4sn+4jn+6n+s^2-2js-3s+j^2+3j+2)}{48n^3}H^{3}\\
	&\quad + \frac{(n - s + j)(3n-2s+2j+2)}{48n^2}H\overline{R} - \frac{(n - s + j)(3n-2s+2j+3)}{48n(n-1)}HR\\
	&\quad + \left.\frac{(n-s+j)(5n^2-2sn+2jn-7n+4s-4j-4)}{48n(n-1)(n-2)}H|\mathring{L}|_k^2\right] + O(r^{n-s+j+4}).
\end{align*}
So for $n = 2$, we get
\begin{equation*}
	\hr^{2 - s} = r^{2 - s}\left(1 + \frac{2 - s}{4}rH + r^2\left[ \frac{26 - 19s + 3s^2}{96}H^2 + \frac{2 - s}{24}\overline{R} - \frac{s - 2}{12}R + \frac{2 - s}{12}|\mathring{L}|_k^2 \right]\right).
\end{equation*}
Now taking $f = 1$ and $n = 2$, we find
\begin{align*}
	v_2 &= r^{2 - s} + \frac{2 - s}{4}r^{3 - s}H + r^{4 - s}\left[ \frac{3s^2-19s + 26}{96}H^2 + \frac{2-s}{24}\overline{R} + \frac{s - 2}{12}R + \frac{2 - s}{12}|\mathring{L}|_k^2 \right]\\
	&+ O(r^{5 - s}).
\end{align*}
So
\begin{align*}
	a_1(s) &= \frac{2 - s}{4}H\\
	a_2(s) &= \frac{3s^2-19s + 26}{96}H^2 + \frac{2-s}{24}\overline{R} + \frac{s - 2}{12}R + \frac{2 - s}{12}|\mathring{L}|_k^2.
\end{align*}
Thus,
\begin{align*}
	a_1'(2) &= -\frac{1}{4}H\\
	a_2'(2) &= -\frac{7}{96}H^2 -\frac{1}{24}\overline{R} + \frac{1}{12}R - \frac{1}{12}|\mathring{L}|_k^2.
\end{align*}
Now, by (\cite{gg19}), we have
\begin{align}
	\label{vol1eq}
	v^{(1)} &= \frac{1-n}{2n}H\\
	v^{(2)} &= \frac{n - 5}{12(n - 1)}(R - |\mathring{L}|_k^2) + \frac{n - 2}{24n^2}\left( (n - 3)H^2 - 2n\overline{R} \right).
\end{align}
So in this case, we have $v^{(1)} = -\frac{1}{4}H$. Here $v^{(1)}$ is the first renormalized volume coefficient, not our solution $v$ to the scattering equation.
Thus, using Theorem \ref{globthm1}, we find that for $n = 2$,
\begin{equation}
	\label{V2form}
	V(X,g,\bar{g}) = -\oint_M\left( \left.\frac{d}{ds}\right|_{s = 2}S(s)1 \right)dv_k - \frac{1}{96}\oint_M(8R - 4\overline{R} - 8|\mathring{L}|_k^2 - 3H^2)dv_k.
\end{equation}
Observe that if $g$ happens to be Einstein and $\bar{g}$ is a geodesic compactification (so that $\bar{g} = \hat{\bar{g}}$ -- which in this context is smooth), then
the last integral vanishes, since $\mathring{L}$ and $H$ vanish in this case, and $2R - \overline{R} = 0$ (by (\ref{n3geo2eq})). Thus, this result is consistent with
Theorem 4.1 of \cite{cqy08}.

We next turn to $n = 3$. Using (\ref{bigexpeq}) for $j = 0, 1$, and $2$ sequentially, and putting these into the formula for $v_3$, as well as using Lemma \ref{confchangelem}, we get (with $f = 1$)
\begin{equation}
	\label{V3form}
	\begin{split}
		v_3 =& r^{3 - s} + r^{4 - s}\left( \frac{3 - s}{6}H \right)\\
		&+r^{5 - s}\left[ \frac{54 - 27s + 3s^2}{216}H^2 + \frac{3 - s}{36}\overline{R} + \frac{(s - 3)(1 - s)}{12(5 - 2s)}R + \frac{(s - 3)(s - 1)}{12(5 - 2s)}|\mathring{L}|_k^2 \right]\\
		&+r^{6 - s}\left[ \frac{s - 3}{72}\Delta_kH - \frac{s + 1}{24}\nabla^{\mu}\nabla^{\nu}\mathring{L}_{\mu\nu} - \frac{s + 1}{24}\mathring{L}^{\mu\nu}\overline{R}_{\mu\nu} + \frac{s + 1}{12}\mathring{L}^{\mu\nu}
		R_{\mu\nu}\right.\\
		&+ \frac{3 - s}{144}\partial_r\overline{R} + \frac{(3 - s)(56 - 15s + s^2)}{1296}H^3 + \frac{(3 - s)(11-2s)}{432}H\overline{R}\\
		&+ \left.\frac{(s - 3)(2s^2-14s+15)}{144(5 - 2s)}HR - \frac{2s^3-28s^2+69s - 25}{144(5-2s)}H|\mathring{L}|_k^2\right] + O(r^{7 - s}).
	\end{split}
\end{equation}

It quickly follows that
\begin{align*}
	a_1'(3) &= -\frac{1}{6}H\\
	a_2'(3) &= -\frac{1}{24}H^2 - \frac{1}{36}\overline{R} + \frac{1}{6}R - \frac{1}{6}|\mathring{L}|_k^2\\
	a_3'(3) &= \frac{1}{72}\Delta_kH - \frac{1}{24}\nabla^{\mu}\nabla^{\nu}\mathring{L}_{\mu\nu} - \frac{1}{24}\mathring{L}^{\mu\nu}\overline{R}_{\mu\nu}+\frac{1}{12}\mathring{L}^{\mu\nu}R_{\mu\nu} - \frac{1}{144}\partial_r
	\overline{R}\\
	&\quad-\frac{5}{324}H^3 - \frac{5}{432}H\overline{R} + \frac{1}{16}HR - \frac{13}{144}H|\mathring{L}|_k^2.
\end{align*}
Using the formulae (\ref{vol1eq}) for the renormalized volume coefficients, we finally obtain
\begin{align*}
	V(X,g,\bar{g}) &= \oint_M\left[ -\left.\frac{d}{ds}\right|_{s = 3}S(s)1 - \frac{13}{432}HR + \frac{5}{1296}H\overline{R} + \frac{1}{162}H^3 + \frac{25}{432}H|\mathring{L}|_k^2\right.\\
		&\quad\left.-\frac{1}{72}\Delta_kH + \frac{1}{24}\nabla^{\mu}\nabla^{\nu}\mathring{L}_{\mu\nu} + \frac{1}{24}\mathring{L}^{\mu\nu}\overline{R}_{\mu\nu} + \frac{1}{12}\mathring{L}^{\mu\nu}R_{\mu\nu}
		+ \frac{1}{144}\partial_r\overline{R}\right]dv_k.
\end{align*}

Theorem \ref{gbprop} and Corollary \ref{gbcor} now follow directly from this computation and the main result of \cite{gg19}.

\appendix
\section{Analysis}
\label{proofapp}
We present here the proof of Lemma \ref{geodlem} and sketch the proof of
Theorem \ref{poissonthm}. Both entail adapting standard results for smooth AH metrics to the polyhomogeneous setting with more careful discussion of regularity.

\renewcommand{\thesubsection}{\thesection.\Roman{subsection}}

\subsection{Normal Form}
For $(X,\bar{g})$ a smooth manifold with boundary, let $r$ be the distance function to the boundary. 
We let $\Diff_b(X)$ be the ring of differential operators generated by vector fields $V$ that are tangent to the boundary. In local coordinates
$(r,x^{\mu})$ ($1 \leq \mu \leq n$) near the boundary $M$, $\Diff_b(X)$ is generated over $C^{\infty}$ by $r\frac{\partial}{\partial r}$ and $\frac{\partial}{\partial x^{\mu}}$.

For $p \geq 2$, we let $\mathcal{D}_p$ be the conormal functions 
\begin{equation*}
	\mathcal{D}_p(X) = \left\{ u \in C^p(X):Lu \in C^p(X) \text{ for all } L \in \Diff_b(X) \right\}.
\end{equation*}
The examples of interest
to us are smooth functions and those with asymptotic expansions in $r$ and $r^q\log(r)$, $q \geq p + 1$, with smooth coefficients. The following lemma is easy.

\begin{lemma}
	Let $\omega \in \mathcal{D}_p$. Then $r\omega \in \mathcal{D}_{p + 1}$.
\end{lemma}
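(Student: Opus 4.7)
To show $r\omega \in \mathcal{D}_{p+1}$, I must verify the two conditions defining membership in $\mathcal{D}_{p+1}$: first, that $r\omega \in C^{p+1}(X)$, and second, that $L(r\omega) \in C^{p+1}(X)$ for every $L \in \Diff_b(X)$. The plan is to dispatch the first by a direct computation exploiting $b$-conormality, and then to reduce the second to the first via a commutation identity.

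For the first step, I would work in local Fermi-type coordinates $(r,x^\mu)$ near the boundary. Straightforward differentiation gives
\begin{equation*}
\partial_r(r\omega) = \omega + (r\partial_r)\omega, \qquad \partial_\mu(r\omega) = r \cdot \partial_\mu \omega.
\end{equation*}
Both $r\partial_r$ and $\partial_\mu$ lie in $\Diff_b(X)$, so $b$-conormality yields $(r\partial_r)\omega \in C^p$ and $\partial_\mu\omega \in C^p$. Combined with $\omega \in C^p$ and smoothness of $r$, every first partial of $r\omega$ lies in $C^p$, so $r\omega \in C^{p+1}$. Conceptually, multiplication by $r$ converts a $b$-derivative of $\omega$ into an ordinary derivative of $r\omega$, which is the source of the ``gained'' order of regularity.

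For the second step, the key algebraic input I would establish is that for every $L \in \Diff_b(X)$ there exists $M \in \Diff_b(X)$ with $L \circ M_r = M_r \circ M$, where $M_r$ denotes multiplication by $r$. I would prove this by induction on the order of $L$, writing $L$ as a finite $C^\infty$-linear combination of monomials in the generators $r\partial_r$ and $\partial_\mu$, and using the commutation relations $[\partial_\mu, r] = 0$ and $[r\partial_r, r] = r$. Granting this identity, one has $L(r\omega) = r \cdot M\omega$. Since $\Diff_b(X)$ is closed under composition, $M\omega$ itself lies in $\mathcal{D}_p$ (because $L'(M\omega) = (L'M)\omega \in C^p$ for every $L' \in \Diff_b$), and then applying the first step to $M\omega$ in place of $\omega$ gives $L(r\omega) = rM\omega \in C^{p+1}$, as required.

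I do not anticipate any deep obstacle: the commutation identity is purely algebraic, and once it is in hand, the rest is bookkeeping on top of the single observation that multiplying a conormal function by the defining function trades a $b$-derivative for an ordinary one. The step most requiring care is the inductive commutation verification, specifically tracking how smooth coefficients interact with the two generator types during the reshuffling.
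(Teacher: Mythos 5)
Your argument is correct. The paper offers no proof (it simply asserts the lemma is easy), so there is nothing to compare against; your write-up supplies the missing details. The structure is sound: multiplication by $r$ converts $\partial_r$ into $r\partial_r + 1$ and commutes with the $\partial_\mu$, which is exactly the observation that "trades" a $b$-derivative for an ordinary one at the boundary, and the commutation lemma $L\circ M_r = M_r\circ M$ with $M\in\Diff_b(X)$ then lets you bootstrap from the scalar case to general $L$ via the observation that $M\omega\in\mathcal{D}_p$. The one small point worth stating explicitly is that the identity $\partial_\mu(r\omega)=r\,\partial_\mu\omega$ uses that $r$ is taken as a coordinate in the Fermi chart, so $\partial_\mu r=0$; this is consistent with the paper's coordinate conventions and causes no trouble.
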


We prove the following variation on the existence and uniqueness of solutions to first-order scalar noncharacteristic PDEs. We let $x^0 = r$, and locally extend any coordinate chart
$(x^1,\cdots,x^n)$ on a neighborhood $U \subseteq M$ to be coordinates along with $r$ on a neighborhood $\widetilde{U}$ of $U$ in $X$, by the geodesic identification $\widetilde{U} \approx [0,\varepsilon)_r \times U$,
where $\approx$ denotes diffemorphism.
We then let $(\xi_0,\cdots,\xi_n)$ be the corresponding natural coordinates on $T^*X$ on $\widetilde{U}$.
\begin{proposition}
	\label{noncharprop}
	Let $F \in C^{p}(T^*X \times \mathbb{R})$ be such that, for any smooth one-form $\eta \in \Omega^1(X), u \in C^{\infty}(X)$, the function $x \mapsto F(x,\eta(x),u(x)) \in \mathcal{D}_p(X)$, where $p \geq 2$. Suppose
	that, in any coordinate system as above, $\left.\frac{\partial F}{\partial \xi_0}\right|_M > 0$. Finally, suppose given $\varphi, \tau \in C^{\infty}(M)$
	such that, for every $q \in M$, $F(q,\tau dr + d\varphi,\varphi) = 0$.
	Then there exists a neighborhood $\mathcal{V}$ of $M$ and a unique solution
	$\omega$ to $F(x,d\omega,\omega) = 0$ on $\mathcal{V}$ such that $\omega|_M = \varphi$. Moreover, $\omega \in \mathcal{D}_p$.
\end{proposition}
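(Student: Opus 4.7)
The strategy is the classical method of characteristics, adapted to the polyhomogeneous setting. First, by the non-characteristic hypothesis $F_{\xi_0}|_M > 0$ and the implicit function theorem, one may rewrite the equation $F = 0$ locally near the initial data as $\xi_0 = G(x, \xi_1, \ldots, \xi_n, u)$, where $G$ inherits the polyhomogeneous structure assumed on $F$. One then sets up the characteristic ODE system
\begin{equation*}
\dot x^i = F_{\xi_i}, \qquad \dot \xi_i = -F_{x^i} - \xi_i F_u, \qquad \dot \omega = \xi_j F_{\xi_j},
\end{equation*}
with initial conditions on $M$ given by $x^\mu(0, y) = y^\mu$, $x^0(0, y) = 0$, $\omega(0, y) = \varphi(y)$, and $\xi(0, y) = \tau(y)\, dr + d\varphi(y)$. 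The compatibility assumption $F(y, \tau\, dr + d\varphi, \varphi) = 0$ keeps the flow on the zero set $\{F = 0\}$. Standard ODE theory gives a flow of the optimal regularity in the joint variables, and the non-characteristic condition yields $\dot x^0(0, y) > 0$, so $(s, y) \mapsto x$ is a local diffeomorphism near $\{s = 0\}$. Inverting produces a local solution $\omega$ with $\omega|_M = \varphi$ on some collar neighborhood $\mathcal{V}$ of $M$. Uniqueness is then forced because any solution must reproduce the same characteristic curves, whose initial directions are determined by $\varphi$, $\tau$, and the equation.

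The substantive remaining task is to promote this base regularity to $\omega \in \mathcal{D}_p(\mathcal{V})$, that is, to show that every finite iterated $b$-derivative of $\omega$ lies in $C^p$. I would argue by induction on the order $k$ of applied $b$-vector fields. Given $L \in \Diff_b(X)$ of first order, applying $L$ to $F(x, d\omega, \omega) = 0$ and using the chain rule produces a linear first-order PDE for $L\omega$ of the schematic form
\begin{equation*}
F_{\xi_i}(x, d\omega, \omega)\, \partial_i(L\omega) + F_u(x, d\omega, \omega)\, L\omega = \mathcal{R}_L,
\end{equation*}
where $\mathcal{R}_L$ involves $x$, $d\omega$, $\omega$, and lower-order $b$-derivatives of $\omega$. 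Under the inductive hypothesis, and by the polyhomogeneous assumption on $F$, both the coefficients and the source $\mathcal{R}_L$ lie in $\mathcal{D}_p$; moreover, the principal symbol of this linear PDE is still controlled on $M$ by $F_{\xi_0}|_M > 0$, so it remains non-characteristic. One may therefore solve this linear problem along the same characteristic curves constructed above, with initial datum $L\omega|_M$ determined by commuting $L$ past the equation. Transporting along characteristics yields $L\omega \in \mathcal{D}_p$, closing the induction.

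The main obstacle is the last step: verifying that $\mathcal{D}_p$ is closed under composition of $F$ (with its assumed polyhomogeneous behavior) with $\mathcal{D}_p$ arguments, and under solution of the resulting linear transport equations along characteristics. This reduces to a Fa\`a di Bruno-type computation in the $b$-calculus, together with the fact that solving $\partial_r v + a\cdot \nabla_x v + bv = f$ by integrating along non-characteristic curves preserves the $\mathcal{D}_p$ class when the coefficients and data do. Both should go through, but they require care with the limited base regularity of $F$ and with the logarithmic terms implicit in the definition of $\mathcal{D}_p$; the non-characteristic hypothesis is what makes these steps work uniformly at each level of the induction.
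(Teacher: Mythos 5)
Your overall strategy—method of characteristics, then a commutator induction to establish conormality—is genuinely different from the paper's argument, but as written it has a real gap precisely at the step you flag as ``the main obstacle.'' The paper does not commute $b$-vector fields through the PDE at all. Instead it reparametrizes the characteristic flow by $r$ (using $F_{\xi_0}|_M>0$ to eliminate the artificial time parameter $s$), writes the resulting ODE system as an integral equation $z = z_0 + \int_0^r G(\sigma,z)\,d\sigma$, and then establishes conormality \emph{at the ODE level}: it differentiates the integral equation with respect to the initial data $\zeta$ and with respect to $r\partial_r$, and uses the assumed structure of $F$—namely that $\partial_\zeta^\alpha G$ and $(r\partial_r)^kG$ remain $C^{p-1}$, which is the content of the polyhomogeneity hypothesis—to bootstrap. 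Finally it inverts only the tangential map $\zeta\mapsto(x^1,\dots,x^n)$ at each fixed $r$, so the inversion does not entangle the $r$-variable with the flow parameter.

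The gap in your argument is the derivative-loss problem in the induction, which you acknowledge but do not resolve. Applying a $b$-vector field $L$ to $F(x,d\omega,\omega)=0$ yields a linear transport equation for $L\omega$ whose coefficients are $F_{\xi_i}(x,d\omega,\omega)$ and $F_u(x,d\omega,\omega)$; since $F$ is only $C^p$, its derivatives $F_{\xi_i}$ and $F_u$ are $C^{p-1}$, and $d\omega$ is likewise $C^{p-1}$, so the coefficients of your linear problem are $C^{p-1}$. A linear transport equation with $C^{p-1}$ coefficients and $C^{p-1}$ source generically produces a solution that is only $C^{p-1}$, not $C^p$, so the induction does not close. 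The lemma you invoke—that integrating $\partial_r v + a\cdot\nabla v + bv=f$ along non-characteristic curves preserves the $\mathcal{D}_p$ class when the coefficients and data are in $\mathcal{D}_p$—is essentially a special case of the very proposition you are proving (take $F$ affine in $(\xi,u)$), and proving it requires precisely the careful ODE-level argument via the integral equation that the paper carries out. In addition, ``standard ODE theory gives a flow of the optimal regularity in the joint variables'' already overstates the classical result: with $F\in C^p$ the characteristic right-hand side is $C^{p-1}$, and getting $C^p$ joint regularity of the flow (rather than $C^{p-1}$) is exactly the bootstrap the paper performs and attributes to the structure of $G$ coming from the $\mathcal{D}_p$-hypothesis on $F$, with a citation to Walter's ODE text.
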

\begin{proof}
	Everything except the last statement is standard. We present an adaptation of the usual proof which preserves conormality. We work locally in a coordinate chart.
	Recall that the ordinary proof relies on converting the PDE
	to a first-order system of ODEs representing a characteristic flow off of $M$, and parametrized by time $t$. We first eliminate $t$ and replace it by the first coordinate $r$, which is
	permissible because $\frac{\partial r}{\partial t}|_{t = 0} > 0$ by the noncharacteristic hypothesis. We thus begin by considering the following system of ODEs.
	\begin{align*}
		\frac{d\xi_j}{dr} &= \frac{\frac{\partial F}{\partial x^j} + \xi_j\frac{\partial F}{\partial y}}{\frac{\partial F}{\partial \xi_0}} \quad (0 \leq j \leq n)\\
		\frac{dx^{\mu}}{dr} &= \frac{\frac{\partial F}{\partial \xi_{\mu}}}{\frac{\partial F}{\partial \xi_0}} \quad (1 \leq \mu \leq n)\\
		\frac{dy}{dr} &= -\xi_j\frac{\frac{\partial F}{\partial \xi_j}}{\frac{\partial F}{\partial \xi_0}},
	\end{align*}
	with initial conditions $x^{\mu} = \zeta^{\mu}$ (some $\zeta^{\mu}$), $\xi_{\mu}(0) = \partial_{\mu}\varphi(\zeta^1,\cdots,\zeta^n)$, $y(0) = \varphi(\zeta^1,\cdots,\zeta^n)$, and with
	$\xi_0(0) = \tau$.
	Letting $z = (x^1,\cdots,x^n,\xi_0,\cdots,\xi_n,y)$, this system may be written
	\begin{equation*}
		\frac{dz}{dr} = G(r,z),
	\end{equation*}
	where by hypothesis $G$ is $C^{p - 1}$. Moreover, for any multi-index $\alpha$, $\frac{\partial G}{\partial x^{\alpha}}$ is $C^{p - 1}$ (where $\alpha$ contains no 0's, i.e., no $r$-derivatives); and so
	likewise is $(r\partial_r)^kG$ for any $k$.

	The system is equivalent to the integral equation
	\begin{equation}
		\label{firstinteq}
		z = z_0 + \int_0^r G(s,z(s))ds,
	\end{equation}
	where $z_0$ contains the initial values written above. This equation, of course, has a $C^1$ solution by standard theory. Now we view $z$ as a function both of $r$ and of $(\zeta^1,\cdots,\zeta^n)$. Formally
	differentiating (\ref{firstinteq}), we get
	\begin{equation*}
		\frac{d}{dr}\frac{\partial z}{\partial \zeta^{\mu}} = \frac{\partial z_0}{\partial \zeta^{\mu}} + \int_0^r\frac{\partial G}{\partial z^{a}}\frac{\partial z^{a}}{\partial \zeta^{\mu}}
	\end{equation*}
       (where we use the index $a$ as an index for the components of $z$). Now, the integrand here is still $C^{p - 1}$ by hypothesis; and so by a standard argument, and its accompanying induction
       (see, e.g., section 13 of \cite{wal98}), $\frac{\partial^{|\alpha|}z}{\partial \zeta^{\alpha} }$ is $C^p$.
       On the other hand, using the identity
       \begin{equation*}
	       \frac{d}{dr}(r\frac{dz}{dr}) = r\frac{d^2z}{dr^2} + \frac{dz}{dr}
       \end{equation*}
       and (\ref{firstinteq}), along with the easily-verified fact that $H = \int_0^rG(s,z)ds$ is $C^p$ along with $\left( r\frac{d}{dr} \right)^kH$ for any $k$, we conclude by induction that
       $\left( r \frac{\partial}{\partial r} \right)^k\left(\frac{\partial^{|\alpha|}}{\partial \zeta^{\alpha}}\right)z$ is $C^p$.

       We now have functions $x^1(r,\zeta^1,\cdots,\zeta^n),\cdots,x^n(r,\zeta^1,\cdots,\zeta^n)$. We wish to invert the dependence of $x$ on $\zeta$. For notational simplicity, we assume by restriction
       and rescaling if necessary that $\zeta^1,\cdots,\zeta^n$ take values in all of $\mathbb{R}^n$. Let $\Phi(r,\zeta^1,\cdots,\zeta^n) = (x^1,\cdots,x^n)$, and define
       $Z_{r_0}:\mathbb{R}^n \to \mathbb{R}^n$ by $Z_{r_0}(\zeta) = \Phi(r_0,\zeta)$. Now $\Phi$ is $C^p$, and $DZ_0$ is the identity, so by restricting $r_0$ if necessary, we may assume
       that $Z_{r}$ is invertible. In fact, for each $r_0$ it is a $C^{\infty}$ diffeomorphism onto its image, and thus has $C^{\infty}$ inverse $Z_{r_0}^{-1}$. That this inverse is $C^p$ in $r$ follows
       from the implicit function theorem.

       To show that $(r\partial_r)^kZ_{r}^{-1}$ is likewise $C^p$, we write
       \begin{equation*}
	       \Phi(r,Z_r^{-1}(x^1,\cdots,x^n)) = (x^1,\cdots,x^n),
       \end{equation*}
       and then differentiate both sides with respect to $r$. We obtain
       \begin{equation*}
	       \frac{\partial \Phi}{\partial r} + D_{\zeta}Z_r\left( \frac{\partial Z^{-1}}{\partial x^{\mu}}\frac{\partial x^{\mu}}{\partial r} + \frac{\partial Z_r^{-1}}{\partial r} \right)
	       = \left( \frac{\partial x^1}{\partial r},\cdots,\frac{\partial x^n}{\partial r} \right).
       \end{equation*}
       Since $D_{\zeta}Z_r$ is invertible, we can solve this for $\frac{\partial Z_r^{-1}}{\partial r}$, and in particular exhibit
       $r\frac{\partial Z_r^{-1}}{\partial r}$ as a combination of terms already known to be $C^p$.

       Thus, if we set $\omega(r,x) = y(r,Z_r^{-1}(x))$, then $\omega$ is a solution of our PDE -- just as in the usual case, it is a solution at $r = 0$, and we show that
       $\frac{d}{dr}F(x,d\omega,\omega) = 0$. Moreover, $\omega \in \mathcal{D}_p$.
\end{proof}

We now can prove Lemma \ref{geodlem}.

\begin{proof}[Proof of Lemma \ref{geodlem}]
	Let $r$ be the distance to $M$ with respect to $\bar{g}$. We wish to find $\hat{r}$ so that $|d\hr|_{\hr^2g} \equiv 1$ on some neighborhood of the boundary. As in the usual proof (e.g. \cite{g99}), 
	write $\hr = ue^{\omega} = r\tu e^{\omega},$ where $u$ is the singular Yamabe function and $\tu = \frac{u}{r}$. Recall that $\tu = 1 + O(r)$. We thus wish to find $\omega$ so that $\omega|_M = 0$ and so that
	\begin{equation*}
		2\grad_{\bar{g}}(u)(\omega) + u|d\omega|_{\bar{g}}^2 = \frac{1 - |du|_{\bar{g}}^2}{u}.
	\end{equation*}
	In the smooth case, one directly solves this by observing that it is a first-order noncharacteristic equation. However, doing so in this form would not give optimal regularity, which we want.
	Using the fact that $u = r\tu$, we can rewrite this as follows:
	\begin{equation}
		\label{normformeqexp}
		2\partial_r\omega + 2r\tu^{-1}\bar{g}^{ij}\tu_i\omega_j + r\bar{g}^{ij}\omega_{i}\omega_{j} = \frac{1 - \tu^2 - r^2|d\tu|_{\bar{g}}^2 - 2r\tu\partial_r\tu}{r\tu^2}.
	\end{equation}
	The right-hand side is $C^{n - 1}$, and in particular, taking $r = 0$, we may solve for $\partial_r\omega|_{r = 0}$. We can then differentiate (\ref{normformeqexp}) iteratively, and at each stage,
	$\partial_r\omega$ is expressed in terms of already-determined quantities. Observe, however, that by (\ref{usyexp}), the right-hand side contains a term of the form $2\mathcal{L}r^n\log(r)$.
	We can handle this by adding a term of the form $Ar^{n + 1}\log(r)$ to our formal expansion of $\omega$ ($A \in C^{\infty}(M)$), and iterating this procedure, we can find some $\omega^0$ of the form
	\begin{equation*}
		\omega^0 = a_1r + a_2r^2 + \cdots + a_nr^n + Ar^{n + 1}\log(r) + a_{n + 1}r^{n + 1} + \cdots,
	\end{equation*}
	where each $a_j \in C^{\infty}(M)$, and such that equation (\ref{normformeqexp}) is satisfied through order $O(r^{n + 1})$ by $\omega^0$. 
	Now set $\omega = \omega^0 + \Omega$, and substitute this into (\ref{normformeqexp}). We obtain the equation
	\begin{equation*}
		2\partial_r\Omega + 2r\tu^{-1}\bar{g}^{ij}\tu_i\Omega_j + 2r\bar{g}^{ij}\omega^0_i\Omega_j + r\bar{g}^{ij}\Omega_i\Omega_j = O(r^{n + 1}),
	\end{equation*}
	where the right-hand side, in particular, is a function in $\mathcal{D}_{n + 1}$. Then since $ru_i, r\omega^0_i \in C^n$, this is a noncharacteristic first-order equation with $C^n$ coefficients,
	and in fact the differential operator is $n$-conormal. Thus, by Proposition \ref{noncharprop}, there is a unique solution $\Omega \in \mathcal{D}_n$, and so we get a unique solution
	$\omega = \omega^0 + \Omega$ to (\ref{normformeqexp}), and $\omega \in \mathcal{D}_n$ as well. Consequently, $\hat{r} = r\tilde{u}e^{\omega} \in \mathcal{D}_{n + 1}$. We construct the diffeomorphism $\psi$,
	as always, by following the flow lines of $\grad_{\bar{g}}\hat{r}$. This is a $C^n$ vector field, so the result follows.
\end{proof}
Observe that we actually show rather more than claimed -- specifically, that the diffeomorphism is conormal -- but the lemma is all we need for our purposes.

\subsection{Meromorphic Extension of the Resolvent}
Theorem \ref{poissonthm} is proved in \cite{gz03} in the context of smooth AH metrics. The proof proceeds by first constructing an infinite-order formal solution $u_f$ satisfying
\[(\Delta_g + s(n - s))u_f = O(r^{\infty}),\]
and then using the following theorem from \cite{mm87} (see also \cite{g05}).
\begin{theorem}
	Let $(X,g)$ be an asymptotically hyperbolic manifold. Then the resolvent $R(s) = (\Delta_g + s(n - s))^{-1}:L^2_g(\mathring{X}) \to L^2_g(\mathring{X})$ for $s > n$ has an extension
	$R(s):r^{\infty}C^{\infty}(X) \to r^sC^{\infty}(X)$ that is holomorphic on $\mathbb{C} \setminus \Gamma$ with $\Gamma \subset \mathbb{C}$ discrete. In particular,
	$R(s)$ is meromorphic on the right half-plane.
\end{theorem}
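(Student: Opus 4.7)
The plan is to follow the Mazzeo--Melrose strategy of constructing $R(s)$ inside the $0$-calculus on the stretched product, then extracting meromorphicity from analytic Fredholm theory applied to a parametrix. Fix $s_0 > n$ real; then $s_0(n-s_0)<0$ lies in the resolvent set of the nonpositive self-adjoint operator $\Delta_g$, so $R(s_0):L^2_g(\mathring X)\to L^2_g(\mathring X)$ exists as a bounded operator by the spectral theorem. The task is to extend this $s_0$-resolvent to a meromorphic family in $s$ with the stated mapping property $r^{\infty}C^{\infty}(X)\to r^s C^{\infty}(X)$.

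First I would set up the geometric microlocal framework. Let $X^2_0$ denote the $0$-stretched product obtained by blowing up the submanifold $\partial X\times\partial X\subset X\times X$ along its diagonal; it carries three boundary faces (left, right, and front). Near $\partial X$, the Laplacian $\Delta_g$ is an elliptic $0$-differential operator in the sense of Melrose, i.e.\ a polynomial in the vector fields vanishing at $\partial X$. The strategy is to build a parametrix for $\Delta_g+s(n-s)$ whose Schwartz kernel is a polyhomogeneous conormal distribution on $X^2_0$ with prescribed index sets on the three faces; the key diagonal face captures the symbolic inversion (giving a small-calculus parametrix inverting the principal symbol) while the left/right faces encode the boundary asymptotics.

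Next I would carry out the boundary model analysis. The normal operator of $\Delta_g+s(n-s)$ at any boundary point is conjugate to the shifted Laplacian on hyperbolic space $\mathbb H^{n+1}$, for which the resolvent is known explicitly via spherical functions and is meromorphic in $s$ with poles only at the points where $s(n-s)$ hits the $L^2$-spectrum $[n^2/4,\infty)$; in particular it is holomorphic for $\re s>n/2$. Pulling this normal-operator inverse back to the front face of $X^2_0$ and combining with a small-calculus parametrix yields an operator $P(s)$, meromorphic in $s$, such that
\begin{equation*}
(\Delta_g+s(n-s))P(s)=I+K(s),
\end{equation*}
where $K(s)$ has kernel vanishing to infinite order at the left and diagonal faces and is therefore compact on the weighted spaces in question. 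Tracking index sets shows that $P(s)$ maps $r^{\infty}C^{\infty}(X)$ into $r^sC^{\infty}(X)$ as desired.

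Analytic Fredholm theory then produces a discrete set $\Gamma\subset\mathbb C$ off of which $(I+K(s))^{-1}$ exists and is meromorphic, so $R(s):=P(s)(I+K(s))^{-1}$ provides the required meromorphic extension on $\mathbb C\setminus\Gamma$; agreement with the $L^2$-resolvent at $s=s_0$ follows from uniqueness since both are right inverses there, and the poles of $R(s)$ include at most $\Gamma$ together with the known poles of $P(s)$ inherited from the $\mathbb H^{n+1}$ model. The principal obstacle, and the heart of the Mazzeo--Melrose argument, is the careful composition calculus on $X^2_0$: verifying that the candidate kernels lie in the correct polyhomogeneous class with index sets that behave well under composition, so that $K(s)$ is actually compact on the target space and the Fredholm machinery applies uniformly in $s$. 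Once that technical infrastructure is in place, the meromorphicity and mapping property stated in the theorem follow formally.
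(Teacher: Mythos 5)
Your outline is the Mazzeo--Melrose parametrix construction on the $0$-stretched product, which is precisely the argument the paper invokes: this theorem is quoted from \cite{mm87} (see also \cite{g05}) and the appendix only sketches the same normal-operator/parametrix strategy rather than reproving it, so your approach matches the paper's. One small slip worth noting: the meromorphically continued model resolvent on $\mathbb{H}^{n+1}$ does \emph{not} have poles where $s(n-s)$ meets the continuous spectrum $[n^2/4,\infty)$ (that locus is the critical line $\re s = n/2$, across which the continuation is regular away from $s=n/2$); its poles lie in $\re s < n/2$, but since your construction only uses holomorphy for $\re s > n/2$ this does not affect the argument.
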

Thus, we need the same theorem in the case where $g$ is not smooth AH, but rather polyhomogeneous, and where the codomain of $R(s)$ may likewise be only polyhomogeneous. Fortunately, the needed modifications to the proof
are slight. The proof of \cite{mm87} proceeds by first proving the result on hyperbolic space, and in fact obtaining an explicit formula for the Schwartz kernel of the resolvent there. On a general asymptotically hyperbolic
space, the proof proceeds in three steps. First, ordinary elliptic analysis is used to produce an interior solution with poor boundary regularity; then, the ``normal operator'' of the Laplacian at a fixed point of the boundary is
analyzed, and shown to coincide with the Laplacian on hyperbolic space, so that the result there can be used to obtain better regularity. Finally, the indicial operator at each point is used to obtain optimal regularity
via formal expansion and Borel's lemma. The first two steps go through exactly the same if the metric $g$ is polyhomogeneous and (say) $C^n$. And the last step, likewise, will be the same except that
logarithmic terms from the metric may appear on the right hand side of the order-by-order construction, and need to be corrected by including logarithmic terms in the solution. The order at which they appear can be
computed formally using the indicial operator.

In particular, the construction of the scattering operator proceeds exactly as in \cite{gz03}, except that because smooth compactifications of the metric have a log term appearing of the form
$r^{n + 1}\log(r)$, expansions of $\mathcal{P}(s)f$ starting at $r^{n - s}$ will have a term of the form $r^{n - s + (n + 1)}\log(r)$; to avoid the appearance of a log term at or before the second indicial root $s$, therefore
(which would complicate the analysis), we require $s < 2n - s + 1$, or $s < n + \frac{1}{2}$.

\bibliographystyle{alpha}
\bibliography{sys}

\newcommand{\noop}[1]{}
\begin{thebibliography}{CdMG11}

\bibitem[ACF92]{acf92}
L.~Andersson, P.~T. Chru\'{s}ciel, and H.~Friederich.
\newblock On the regularity of solutions to the {Y}amabe equation and the
  existence of smooth hyperboloidal initial data for {E}instein's field
  equations.
\newblock {\em Comm. Math. Phys.}, 149(3):587--612, 1992.

\bibitem[AM88]{am88}
P.~Aviles and R.~C. McOwen.
\newblock Complete conformal metrics with negative scalar curvature in compact
  {R}iemannian manifolds.
\newblock {\em Duke Math. J.}, 56(2):395--398, 1988.

\bibitem[CdMG11]{cg11}
S.-Y.~A. Chang and M.~d.~M.~Gonzalez.
\newblock Fractional {L}aplacian in conformal geometry.
\newblock {\em Advances in Mathematics}, 226(2):1410--1432, 2011.

\bibitem[CQY08]{cqy08}
S.-Y.~A. Chang, J.~Qing, and P.~Yang.
\newblock Renormalized volumes for conformally compact {E}instein manifolds.
\newblock {\em J. Math. Sci. (N.Y.)}, 149(6):1755--1769, 2008.

\bibitem[FG02]{fg02}
C.~Fefferman and C.~R. Graham.
\newblock $q$-curvature and {P}oincar\'{e} metrics.
\newblock {\em Math. Res. Lett.}, 9:139--151, 2002.

\bibitem[GG19]{gg19}
C.~R. Graham and M.~J. Gursky.
\newblock Chern-{G}auss-{B}onnet formula for singular {Y}amabe metrics in
  dimension four.
\newblock {\em preprint. ar{X}iv:1902.01562}, pages 1--26, 2019.

\bibitem[GL91]{gl91}
C.~R. Graham and J.~M. Lee.
\newblock Einstein metrics with prescribed conformal infinity on the ball.
\newblock {\em Adv. Math.}, 87:186 -- 225, 1991.

\bibitem[Gra00]{g99}
C.~R. Graham.
\newblock Volume and area renormalizations for conformally compact {E}instein
  metrics.
\newblock {\em Suppl. Rendiconti Circolo Mat. Palermo}, 63:31--42, 2000.

\bibitem[Gra17]{g17}
C.~R. Graham.
\newblock Volume renormalization for singular {Y}amabe metrics.
\newblock {\em Proceedings of the AMS}, 145(4):1781--1792, 2017.

\bibitem[Gui05]{g05}
C.~Guillarmou.
\newblock Meromorophic properties of the resolvent on asymptotically hyperbolic
  manifolds.
\newblock {\em Duke Math. J.}, 129(1):1--37, 2005.

\bibitem[GW14]{gw14}
A.~R. Gover and A.~Waldron.
\newblock Boundary calculus for conformally compact manifolds.
\newblock {\em Indiana Univ. Math. J.}, 63(1):119--163, 2014.

\bibitem[GW15]{gw15}
A.~R. Gover and A.~Waldron.
\newblock Conformal hypersurface geometry via a boundary
  {L}oewner-{N}irenberg-{Y}amabe problem.
\newblock {\em Preprint:ar{X}iv:1506.02723}, 2015.

\bibitem[GW17]{gw17}
A.~R. Gover and A.~Waldron.
\newblock Renormalized volume.
\newblock {\em Comm. Math. Phys.}, 354(3):1205--1244, 2017.

\bibitem[GZ03]{gz03}
C.~R. Graham and M.~Zworski.
\newblock Scattering matrix in conformal geometry.
\newblock {\em Invent. Math.}, 152:89 -- 118, 2003.

\bibitem[HS98]{hs98}
M.~Hennington and K.~Skenderis.
\newblock The holographic {W}eyl anomaly.
\newblock {\em Journal of High Energy Physics}, 7:23, 1998.

\bibitem[JO21]{jo21}
A.~Juhl and B.~Orsted.
\newblock Residue families, singular {Y}amabe problems and extrinsic conformal
  {L}aplacians.
\newblock {\em preprint: ar{X}iv: 2101.09027}, pages 1--115, 2021.

\bibitem[LN74]{ln74}
C.~Loewner and L.~Nirenberg.
\newblock Partial differential equations invariant under conformal or
  projective transformations.
\newblock In {\em Contributions to analysis (a collection of papers dedicated
  to {L}ipman {B}ers)}, pages 245--272. Academic Press, New York, 1974.

\bibitem[McK18]{m18a}
S.~McKeown.
\newblock Formal theory of cornered asymptotically hyperbolic {E}instein
  metrics.
\newblock {\em J. Geom. Anal. (to appear)}, pages 1--53, 2018.
\newblock arXiv:1708.02390.

\bibitem[MM87]{mm87}
R.~R. Mazzeo and R.~B. Melrose.
\newblock Meromorphic extension of the resolvent on complete spaces with
  asymptotically constant negative curvature.
\newblock {\em J. Func. Anal.}, 75:260 -- 310, 1987.

\bibitem[Wal98]{wal98}
W.~Walter.
\newblock {\em Ordinary Differential Equations}.
\newblock Springer, New York, 1998.

\end{thebibliography}

\end{document}